\newcolumntype{L}[1]{>{\raggedright\let\newline\\\arraybackslash\hspace{0pt}}m{#1}}
\newcolumntype{C}[1]{>{\centering\let\newline\\\arraybackslash\hspace{0pt}}m{#1}}
\newcolumntype{R}[1]{>{\raggedleft\let\newline\\\arraybackslash\hspace{0pt}}m{#1}}
\definecolor{clemson-orange}{RGB}{234,106,32}
\definecolor{chicago-maroon}{RGB}{128,0,0}
\definecolor{cincinnati-red}{RGB}{190,0,0}
\definecolor{soft-cyan}{RGB}{68,85,90}
\newcommand{\bb}{\mathbb}
\newcommand{\R}{\bb R}
\newcommand{\Z}{{\bb Z}}
\newcommand{\N}{{\bb N}}
\theoremstyle{definition}
\newtheorem{theorem}{Theorem}[section]
\newtheorem{lemma}[theorem]{Lemma}
\newtheorem{corollary}[theorem]{Corollary}
\newtheorem{definition}[theorem]{Definition}
\newtheorem{remark}[theorem]{Remark}
\newtheorem{example}[theorem]{Example}
\newtheorem{conj}[theorem]{Conjecture}
\DeclareMathOperator*{\conv}{conv}
\DeclareMathOperator*{\argmax}{argmax}
\DeclareMathOperator*{\argmin}{argmin}
\DeclareMathOperator*{\vol}{vol}
\DeclareMathOperator*{\comp}{comp}
\DeclareMathOperator*{\icomp}{icomp}
\DeclareMathOperator*{\poly}{poly}
\DeclareMathOperator*{\size}{size}
\newcommand{\I}{\mathcal{I}}
\newcommand{\C}{\mathcal{C}}
\newcommand{\D}{\mathcal{D}}
\newcommand{\cQ}{\mathcal{Q}}
\newcommand{\cA}{\mathcal{A}}
\newcommand{\CP}{\mathcal{CP}}
\numberwithin{equation}{section}
\title{Complexity of optimizing over the integers}
\author{Amitabh Basu\thanks{Department of Applied Mathematics and Statistics, Johns Hopkins University, {\tt basu.amitabh@jhu.edu.} The author gratefully acknowledges support from Air Force Office of Scientific Research (AFOSR) grant FA95502010341 and National Science Foundation (NSF) grants CCF2006587.}\bigskip}
\date{\today}
\begin{document}

\maketitle

\begin{abstract} 
In the first part of this paper, we present a unified framework for analyzing the algorithmic complexity of any optimization problem, whether it be continuous or discrete in nature. This helps to formalize notions like ``input", ``size" and ``complexity" in the context of general mathematical optimization, avoiding context dependent definitions which is one of the sources of difference in the treatment of complexity within continuous and discrete optimization. In the second part of the paper, we employ the language developed in the first part to study information theoretic and algorithmic complexity of {\em mixed-integer convex optimization}, which contains as a special case continuous convex optimization on the one hand and pure integer optimization on the other. We strive for the maximum possible generality in our exposition. 

We hope that this paper contains material that both continuous optimizers and discrete optimizers find new and interesting, even though almost all of the material presented is common knowledge in one or the other community. We see the main merit of this paper as bringing together all of this information under one unifying umbrella with the hope that this will act as yet another catalyst for more interaction across the continuous-discrete divide. In fact, our motivation behind Part I of the paper is to provide a common language for both communities.
\end{abstract}

\part{\Large A general framework for complexity in optimization}

\section{The setup}
This paper deals with theoretical complexity analyses for optimization problems where some or all decision variables are constrained to take integer values. This means that we will look at provable upper and lower bounds on the efficiency of algorithms that solve these problems. We will consider the standard Turing machine model of computation, and we will study algorithms that receive an optimization problem as ``input" and we wish to study the efficiency of the algorithm as a function of the ``size" of the problem. Moreover, we wish to develop a framework that can seamlessly handle discrete optimization and continuous optimization. In particular, the focus of this paper will be on so-called {\em mixed-integer convex optimization}, which includes as special cases continuous, nonlinear convex optimization on the one hand and integer linear optimization on the other which can model problems with a combinatorial and discrete nature. Therefore, it is imperative to have a general setup that can formally make sense of ``writing down" an optimization problem as ``input" to an algorithm and the related ``size" of the optimization problem, no matter whether it is combinatorial or numerical in nature. 

For instance, most optimization problems with a combinatorial or discrete nature have a well defined, universally accepted notion of ``encoding" and therefore of ``size" (such as the Matching Problem or Traveling Salesperson Problem (TSP) on graphs, or linear programming (LP) over the rationals). Things are more tricky with continuous optimization problems. What is the ``size" of a general optimization problem of the form $\min\{f(x): g_i(x) \leq 0,\;\; i=1, \ldots, m\}$, where $I$ and $g_i$ are smooth, nonlinear functions defined on $\R^n$? If these functions have a particular algebraic form (e.g., polynomial optimization), then there is usually no controversy because one considers the encoding of the coefficients appearing in the polynomials. What if the functions are given via evaluation and gradient oracles?

In fact, the question can be raised whether the Turing machine model is even appropriate in this setting, because it only allows for computation over a finite set of symbols and numerical/scientific computation problems may require a more general model of computation. Several proposals have been put forward to address this issue; see~\cite{turing1937computable,blum1989theory,borodin-munro,friedman1971algorithmic,ko1991complexity,pour1983computability,abramson1971effective,lovasz1986algorithmic} as a representative list. It turns out that all of the discussion in this Part I of this paper will hold true in any of these different models of computation. For concreteness, we will assume a computational framework  that works within the standard framework of the Turing machine model of computation augmented with appropriate oracles~\cite{lovasz1986algorithmic,ko1991complexity} and has its roots in the constructive philosophy of mathematics~\cite{beeson2012foundations,bishop1967foundations}. We include the formal definition here for completeness of this article and the reader is referred to~\cite{lovasz1986algorithmic,ko1991complexity} for a fuller discussion.

\begin{definition}\label{def:real-oracles} An oracle implementing a real number $\alpha \in \R$ takes as input a rational number $\epsilon > 0$ and outputs a rational number $r$ such that $|r - \alpha| \leq \epsilon$. Moreover, there exists a constant $k$ (independent of $\epsilon$) such that the bit encoding size of $r$ is at most $k$ times the encoding size of $\epsilon$. The {\em size of the oracle} is $k$.

If the real number $\alpha$ implemented by an oracle is rational, then a second constant $k'$ is added to the size of the oracle with the guarantee that the bit encoding size of $\alpha$ is at most $k'$. 
\end{definition}
\bigskip

A study of general optimization methods in the 70s and 80s led to the insight that all such procedures (whether combinatorial or analytical in nature) can be understood in a unified framework which we present now. Our exposition is a summary of ideas put forth in several works and giving a complete survey of this literature is difficult. With apologies for our omissions, we list two references that we personally found to be most illuminating~\cite{Nemirovski_Yudin_book,traub1983information}.

The overall idea, roughly speaking, is that one gathers {\em information} about an optimization problem which tells the optimizer which instance needs to be solved within a problem class, and then computations are performed on the information gathered to arrive at a solution (possibly approximate, with guaranteed bounds on error). Let us formalize this idea in a way that encompasses both discrete and continuous optimization~\cite{traub1983information}.

\begin{definition}\label{def:gen-opt}[General optimization problem] An {\em optimization problem class} is given by a set $\I$ of {\em instances}, a set $G$ of {\em possible solutions}, and a {\em solution operator} $$S: \I \times \R_+ \to 2^G \cup\{INFEAS\} \cup \{UNBND\},$$ where $2^G$ denotes the power set of $G$ and the operator $S$ satisfies three properties:
\begin{enumerate}
\item $S(I,0) \neq \emptyset$ for all $I \in \I$, and
\item For any two nonnegative real numbers $\epsilon_1 <  \epsilon_2$ and any $I\in \I$, we have $S(I, \epsilon_1) \subseteq S(I, \epsilon_2)$.
\item If $INFEAS\in S(I,\epsilon)$ (or $UNBND\in S(I,\epsilon)$) for some $I\in \I$ and $\epsilon \geq 0$, then $S(I,\epsilon) = \{INFEAS\}$ (or $S(I,\epsilon) = \{UNBND\}$ respectively).
\item If $UNBND\in S(I,0)$ for some $I\in \I$, then $S(I,\epsilon) = \{UNBND\}$ for all $\epsilon\geq 0$.
\end{enumerate}

\end{definition}

The interpretation of the above definition is simple: $\I$ is the set of optimization problem instances we wish to study, $G$ is the space of solutions to the problems, and for any instance $I \in \I$ and $\epsilon \geq 0$, $S(I,\epsilon)$ is the set of $\epsilon$-approximate solutions with the understanding that $S(I,\epsilon) = \{INFEAS\}$ encodes the fact that $I$ is an infeasible instance (with respect to $\epsilon$ error; see examples below) and $S(I,\epsilon) = UNBND$ encodes the fact that the objective value for $I$ is unbounded. The advantage of this definition is that there is no need to assume any structure in the set $G$; for example, it could be some Euclidean space, or it could just as well be some combinatorial set like the set of edges in a graph. Linear Programming in $\R^n$ would set $G=\R^n$ while the Traveling Salesperson Problem on $n$ cities corresponds to setting $G$ to be all tours in the complete graph $K_n$. It is also not hard to encode optimization problems in varying ``dimensions" in this framework, e.g., $G$ is allowed to be $\bigcup_{n \in \N} \R^n$. Also, the notion of ``$\epsilon$-approximate" does not require any kind of norm or distance structure on $G$. Property 2 simply requires that as we allow more error, we obtain more solutions. Thus, Definition~\ref{def:gen-opt} captures discrete and continuous optimization in a clean, unified framework while allowing for a very flexible notion of an ``$\epsilon$-approximate" solution for $\epsilon > 0$.

\begin{example}\label{ex:concrete-ex} We now present some concrete examples.
\begin{enumerate}
\item Traveling Salesperson Problem (TSP). For any natural number $n\in \N$, the (symmetric) traveling salesperson problem for $n$ cities seeks to find a tour of minimum length that visits all cities, given pairwise intercity distances. To model this in the above framework, one defines $E_n$, for any natural number $n\in \N$, to be the set of all unordered pairs in $\{1, \ldots, n\}$. Let $G = \cup_{n\in \N} 2^{E_n}$, i.e., each element of $G$ is a subset of unordered pairs (these are edges in the tour). $\I$ is the family of all TSPs with given intercity distances. We allow any number $n$ of cities; of course, for a particular instance $I$ there is a fixed number of cities. $S(I,\epsilon)$ can be taken to be the set of all tours in the complete graph $K_n$ that are within an additive $\epsilon$ error or within a multiplicative $(1+\epsilon)$ factor of the optimal tour length in $I$. $\epsilon = 0$ corresponds to the set of all optimal tours.

One could also fix a natural number $n\in \N$ and simply consider only the problems on $n$ cities. In this case, $G = 2^{E_n}$ and $\I$ would consist of all possible tours on $n$ cities, i.e., where only the intercity distances are changed, but the number of cities is fixed.
\item Mixed-integer linear programming (MILP). 
\begin{itemize} \item (Fixed dimension) Let $n,d \in \N$ be fixed. $G = \R^n \times \R^d$, $\I$ is the set of all mixed-integer linear programs defined by matrices $A \in \R^{m\times n}, B \in \R^{m \times d}$, and vectors $b \in \R^m, c_1 \in \R^n, c_2 \in \R^d$, where $m\in \N$ can be chosen as any natural number (thus, $\I$ contains all MILPs with any number of constraints for $m=1,2, \ldots$, but the total number of variables is fixed): $$\max\{c_1^Tx + c_2^Ty \;:\; Ax + By \leq b,\;\; x \in \Z^n, y\in \R^d\}.$$ $S(I,\epsilon)$ may be defined to be all solutions $(x,y)\in G$ to an MILP instance $I$ such that $c_1^Tx + c_2^Ty$ is within an additive $\epsilon$ error of the optimal value for $I$. Taking $\epsilon =0$ would mean we are considering the exact optimal solution(s). Alternatively, one may define $S(I,\epsilon)$ to be the set of $(x,y)\in G$ such that there is an optimal solution to $I$ within $\epsilon$ distance to $(x,y)$.
\item (Variable dimension) We can consider the family of all MILPs with a fixed number of integer variables, but allowing for any number of continuous variables. Here $n \in \N$ is fixed and $G = \bigcup_{d \in \N}(\R^n \times \R^d)$. Everything else is defined as above. Similarly, we may also allow the number of integer variables to vary by letting $G = \bigcup_{n\in \N, d \in \N}(\R^n \times \R^d)$.
\end{itemize}
Fixing $n=0$ in the above settings would give us {\em (pure) linear programming}.
\item Nonlinear Optimization. In a similar fashion as above, we may model nonlinear optimization problems of the form \begin{equation}\label{eq:nonlinear-opt}\min\{f(x): g_i(x) \leq 0\; i=1, \ldots, m\}.\end{equation} The class $\I$ may restrict the structure of the objective and constraint functions (e.g., convex, twice continuously differentiable, nonsmooth etc.). As before, $S(I,\epsilon)$ may correspond to all solutions that are within an $\epsilon$ error of the true optimal value, or solutions within $\epsilon$ distance of the set of optimal solutions, or the set of points where the norm of the gradient is at most $\epsilon$ in the unconstrained case, or any other notion of $\epsilon$-approximate solutions commonly studied in the nonlinear optimization literature. One may also allow $\epsilon$ slack in the constraints, i.e., $g_i(x) \leq \epsilon$ for any $x\in S(I,\epsilon)$.
\end{enumerate}
\end{example}

Next, we discuss the notion of an oracle that permits us to figure out which problem instance we need to solve.

\begin{definition} An {\em oracle} for an optimization problem class $\I$ is given by a family $\cQ$ of possible {\em queries} and a set $H$ of possible {\em answers}. Each query $q\in \cQ$ is an operator $q: \I \to H$. We say that $q(I) \in H$ is the answer to the query $q$ on the instance $I\in \I$.
\end{definition}

\begin{example}\label{ex:concrete-oracles} We now consider some standard oracles for the settings considered in Example~\ref{ex:concrete-ex}.
\begin{enumerate}
\item For the TSP, the typical oracle uses two types of queries. One is the dimension query $q_{\dim}$, which returns the number $q_{\dim}(I)$ of cities in the instance $I$, and the queries $q_{ij}(I)$ which returns the intercity distance between cities $i,j$ (with appropriate error exceptions if $i$ or $j$ are not in the range $\{1, \ldots, q_{\dim}(I)\}$).
\item For MILP, the typical oracle uses the following queries: the dimension queries for $n$ and $d$ (unless one or both of them are fixed and known), a query $q^A_{ij}(I)$ that reports the entry of matrix $A$ in row $i$ and column $j$ for the instance $I$, and similar queries $q^B_{ij}, q^b_i, q^c_j$ for the matrix $B$, and vectors $b, c$ (with appropriate error exceptions if the queried index is out of bounds).
\item For Nonlinear Optimization, the  most commonly used oracles return function values, gradient/subgradient values, Hessian or higher order derivative values at a queried point. Thus, we have queries such as $q^{f,x}_0(I)$ which returns $f(x)$ for the objective function $f$ in an instance of~\eqref{eq:nonlinear-opt} where $x$ is a point in the appropriate domain of $f$, or the query $q^{f,x}_1(I)$ which returns the gradient $\nabla f(x)$. Similarly, one has queries for the constraints. Often the set version of these oracles are assumed instead, specially in convex optimization, where one is given a separation oracle for the feasible region and the epigraph of the objective function; see~\cite{lovasz1986algorithmic,GroetschelLovaszSchrijver-Book88} for a well developed theory and applications in this setting.

If the problem class $\I$ has an algebraic form, e.g., polynomial optimization, then the oracle queries may be set up to return the values of the coefficients appearing in the polynomial.
\end{enumerate}
\end{example}

One can seamlessly accommodate oracles with error in the above set-up. For example, the weak separation oracles in~\cite{GroetschelLovaszSchrijver-Book88} can be modeled with no change in the definitions just like strong/exact separation oracles. We will only work with deterministic oracles in this paper. See~\cite{braun2017lower,Nemirovski_Yudin_book} for a discussion of stochastic oracles in the context of continuous convex optimization. We next recall the notion of an oracle Turing machine. 
\begin{definition} An {\em oracle Turing machine} with access to an oracle $(\cQ, H)$ is a Turing machine that has the enhanced ability to pose any query $q\in \cQ$ and use the answer in $H$ in its computations. The queries it poses may depend on its internal states and computations, i.e., it can query adaptively during its processing. 
\end{definition}

We now have everything in place to define what we mean by an optimization algorithm/procedure. Since we focus on the (oracle) Turing machine model and any algorithm will process elements of the solution set $G$ and set of answers $H$ from an oracle, we assume that the elements of these sets can be represented by binary strings or appropriate real number oracles (see Definition~\ref{def:real-oracles}). If one wishes to adopt a different model of computation, then these sets will need representations within that computing paradigm. This clearly affects what class of problems and oracles are permissible. We will not delve into these subtle questions; rather we will stick to our Turing machine model and assume that the class of problems we are dealing with has appropriate representations for $G$ and $H$.

\begin{definition}\label{def:comp} Let $(\I, G, S)$ be an optimization problem class and let $(\cQ, H)$ be an oracle for $\I$. For any $\epsilon \geq 0$, an {\em $\epsilon$-approximation algorithm for $(\I, G, S)$ using $(\cQ, H)$} is an oracle Turing machine with access to $(\cQ, H)$ that starts its computations with the empty string as input and, for any $I\in \I$, ends its computation with an element of $S(I, \epsilon)$, when it receives the answer $q(I)$ for any query $q\in \cQ$ it poses to the oracle. 

If $\cA$ is such an $\epsilon$-approximation algorithm, we define the {\em total complexity} $\comp_{\cA}(I)$ to be the number of elementary operations performed by $\cA$ during its run on an instance $I$ (meaning that it receives $q(I)$ as the answers to any query $q$ it poses to the oracle), where each oracle query counts as an elementary operation (reading the answer of a query may require more than one elementary operation, depending on its length). If $\cA$ makes queries $q_1, \ldots, q_k$ during its run on an instance $I$, we say the {\em information complexity} $\icomp_{\cA}(I)$ is $|q_1(I)| + \ldots + |q_k(I)|$, where $|q_i(I)|$ denotes the length of the binary string or size of the real number oracle (see Definition~\ref{def:real-oracles}) representing the answer $q_i(I)$.

Following standard conventions, the {\em (worst case) complexity} of the algorithm $\cA$ for the problem class $(\I, G, S)$ is defined as $$\comp\textstyle{_{\cA}}:= \sup_{I \in \I}\;\;\comp\textstyle{_{\cA}}(I),$$ and the {\em (worst case) information complexity} is defined as $$\icomp\textstyle{_{\cA}}:= \sup_{I \in \I}\;\;\icomp\textstyle{_{\cA}}(I),$$
\end{definition}

\begin{remark}\label{rem:lower bound} One can assume that any algorithm that poses a query $q$ reads the entire answer $q(I)$ for an instance $I$. Indeed, if it ignores some part of the answer, then one can instead consider the queries that simply probe the corresponding bits that are used by the algorithm (and we may assume our oracles to have this completeness property). We will assume this to be the case in the rest of the paper. Such an assumption can also be made without loss of generality in any other reasonable model of computation.

This implies that the information complexity $\icomp_{\cA}(I)$ is less than or equal to the total complexity $\comp_{\cA}(I)$ of any algorithm $\cA$ running on any instance $I$.

An important notion of complexity that we will not discuss in depth in this paper is that of {\em space complexity}. This is defined as the maximum amount of information (from the oracle queries) and auxiliary computational memory that is maintained by the algorithm (oracle Turing machine) during its entire run. As in Definition~\ref{def:comp}, one can define the {\em total space complexity} and the {\em information space complexity}. Both notions can be quite different from $\comp\textstyle{_{\cA}}$ and $\icomp\textstyle{_{\cA}}$ respectively, since one keeps track of only the amount of information and auxiliary data held in memory at any given stage of the computation, as opposed to the overall amount of information or auxiliary memory used. In many optimization algorithms, it is not necessary to maintain all of the answers to previous queries or computations in memory. A classic example of this is the (sub)gradient descent algorithm where only the current function values and (sub)gradients are stored in memory for computations, and they are not needed in subsequent iterations.
\end{remark}

\section{Oracle ambiguity and lower bounds on complexity}\label{sec:ambiguity} For many settings, especially problems in numerical optimization, a finite number of oracle queries may not pin down the exact problem one is facing. For example, consider $(\I,G,S)$ to be the problem class of the form~\eqref{eq:nonlinear-opt} where $f, g_1, \ldots, g_m$ can be any convex, continuously differentiable functions, and suppose the oracle $(\cQ,H)$ allows function evaluation and gradient queries. Given any finite number of queries, there are infinitely many instances that give the same answers to those queries. 

\begin{definition} Let $(\I,G,S)$ be an optimization problem class and let $(\cQ,H)$ be an oracle for $\I$. For any subset $Q \subseteq \cQ$ of queries, define an equivalence relation on $\I$ as follows: $I \sim_Q I'$ if $q(I) = q(I')$ for all $q\in Q$. For any instance $I$, let $V(I, Q)$ denote the equivalence class that $I$ falls in, i.e., $$V(I, Q)= \{I': q(I) = q(I')\;\; \forall q \in Q\}.$$
\end{definition}

The above definition formalizes the fact that if one only knows the answers to queries in $Q$, then one has a course-grained view of $\I$. This is why the notion of an $\epsilon$-approximate solution becomes especially pertinent. The following theorem gives a necessary condition on the nature of queries used by any $\epsilon$-approximation algorithm.

\begin{theorem}\label{thm:eps-condition} Let $(\I, G, S)$ be an optimization problem class and let $(\cQ, H)$ be an oracle for $\I$. If $\cA$ is an $\epsilon$-approximation algorithm for this optimization problem for some $\epsilon\geq 0$, then

$$\bigcap_{I' \in V(I, Q(I))} S(I',\epsilon) \neq \emptyset\qquad \forall I \in \I,$$ where $Q(I)$ is the set of queries used by $\cA$ when processing instance $I$.
\end{theorem}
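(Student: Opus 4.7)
The plan is to exploit the determinism of the oracle Turing machine $\cA$, together with the definition of $V(I, Q(I))$, to exhibit an explicit element in the intersection, namely the output of $\cA$ on $I$ itself.

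First I would fix an instance $I \in \I$ and examine the full run of $\cA$ on $I$. Let $q_1, q_2, \ldots, q_k$ be the (adaptive) sequence of queries $\cA$ poses while processing $I$, so $Q(I) = \{q_1, \ldots, q_k\}$, and let $g^\star$ be the element of $S(I, \epsilon)$ that $\cA$ returns (this is well-defined and lies in $S(I,\epsilon)$ by the definition of an $\epsilon$-approximation algorithm). Since $\cA$ is a deterministic oracle Turing machine starting on the empty input, the first query $q_1$ is fixed; each subsequent query $q_{i+1}$ is a function of the prior answers $q_1(I), \ldots, q_i(I)$ and the internal state of $\cA$; and the final output is determined by the entire answer sequence $q_1(I), \ldots, q_k(I)$.

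Next I would show by induction on $i$ that for every $I' \in V(I, Q(I))$, the run of $\cA$ on $I'$ coincides with its run on $I$ through the first $i$ queries. The base case $i=1$ is immediate because $q_1$ is determined independently of the instance. For the inductive step, if the first $i$ queries posed on $I'$ agree with $q_1,\ldots,q_i$ and the answers agree (which holds because $q_j \in Q(I)$ and $I' \in V(I, Q(I))$ forces $q_j(I') = q_j(I)$), then $\cA$'s internal state is identical in the two runs, so it poses the same $(i+1)$-st query. Pushing the induction through all $k$ queries shows that $\cA$ on $I'$ asks exactly the queries in $Q(I)$, receives the same answers, and therefore returns the same output $g^\star$. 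Because $\cA$ is an $\epsilon$-approximation algorithm, this output must lie in $S(I', \epsilon)$.

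Hence $g^\star \in S(I', \epsilon)$ for every $I' \in V(I, Q(I))$, proving the intersection is nonempty. The only mildly subtle point is the inductive bookkeeping that the adaptive query sequence on $I'$ cannot diverge from that on $I$; the delicate cases involving $INFEAS$ or $UNBND$ outputs require no special treatment, since $g^\star$ could equally well be one of these symbols and the argument above still certifies $g^\star \in S(I', \epsilon)$ on each equivalent instance.
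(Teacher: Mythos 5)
Your proposal is correct and is essentially the paper's own argument: both exhibit the output $g^\star$ of $\cA$ on $I$ as the common element, using the fact that a deterministic oracle Turing machine receiving identical answers on every $I' \in V(I, Q(I))$ must produce identical queries and the same output, which then lies in $S(I',\epsilon)$ by correctness of $\cA$. Your inductive bookkeeping on the adaptive query sequence simply makes explicit the step the paper compresses into ``it will also return $x$ when the answers it receives are $q(I')$.''
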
 

\begin{proof} If $\cA$ is a correct $\epsilon$-approximation algorithm, then suppose it returns $x \in S(I,\epsilon)$ for instance $I$. Since the answers to its queries are the same for all $I'\in V(I,Q(I))$, it will also return $x$ when the answers it receives are $q(I')$ for $q \in Q(I)$ and $I'\in V(I,Q(I))$. Therefore, $x \in S(I',\epsilon)$ for all $I'\in V(I,Q(I))$.
\end{proof}

This leads us to the following definition.

\begin{definition} Let $(\I, G, S)$ be an optimization problem class and let $(\cQ, H)$ be an oracle for $\I$. Let $2^{(\cQ\times H)}$ denote the collection of all {\em finite} sets of pairs $(q,h) \in \cQ\times H$. 

An {\em adaptive query strategy} is a function $D: 2^{(\cQ\times H)} \to \cQ$. The {\em transcript $\Pi(D,I)$ of a strategy $D$ on an instance $I$} is the sequence of query and response pairs $(q_i, q_i(I))$, $i=1,2,\ldots$ obtained when one applies $D$ on $I$, i.e., $q_1 = D(\emptyset)$ and $q_i = D(\{(q_1, q_1(I)), \ldots, (q_{i-1},q_{i-1}(I))\})$ for $i \geq 2$. $\Pi_k(D,I)$ will denote the truncation of $\Pi(D,I)$ to the first $k$ terms, $k \in \N$. We will use $Q(D,I)$ (and $Q_k(D,I)$) to denote the set of queries in the transcript $\Pi(D,I)$ (and $\Pi_k(D,I)$). Similarly, $R(D,I)$ (and $R_k(D,I)$) will denote the set of responses in the transcript.

\noindent The {\em $\epsilon$-information complexity of an instance $I$ for an adaptive strategy $D$} is defined as 
$$\begin{array}{rcl}\icomp\textstyle{_\epsilon}(D,I) &:= &\inf\left\{\sum_{r\in R_k(D,I)} |r|: k \in \N \;\; \textrm{such that}\;\; \bigcap_{I' \in V(I, Q_k(D,I))} S(I',\epsilon) \neq \emptyset\right\}\\
&= &\inf\left\{\sum_{r\in R_k(D,I)} |r|: k \in \N \;\; \textrm{such that}\;\; \bigcap_{I': \Pi_k(D,I') = \Pi_k(D,I)} S(I',\epsilon) \neq \emptyset\right\}
\end{array}$$
The {\em $\epsilon$-information complexity of an adaptive strategy $D$ for the problem class $(\I, G, S)$} is defined as $$\icomp\textstyle{_\epsilon}(D) := \sup_{I\in \I}\;\;\icomp\textstyle{_\epsilon}(D,I)$$
The {\em $\epsilon$-information complexity of the problem class $(\I, G, S)$ with respect to oracle $(\cQ, H)$} is defined as
$$\icomp\textstyle{_\epsilon}:=\inf_D \;\; \icomp\textstyle{_\epsilon}(D),$$ where the infimum is over all possible adaptive queries.
\end{definition}

\begin{remark} In the definitions above, one could restrict adaptive query strategies to be computable, or even polynomial time computable (in the size of the previous query-response pairs). We are not aware of any existing research where such restrictions have been studied to get a more refined analysis of $\epsilon$-information complexity. The typical lower bounding techniques directly lower bound $\icomp_\epsilon$ defined above. One advantage of this is that one does not have to rely on any complexity theory assumptions such as $P \neq NP$ and the lower bounds are unconditional.\end{remark}

We can now formally state the results for lower bounding algorithmic complexity. Remark~\ref{rem:lower bound} and Theorem~\ref{thm:eps-condition} imply the following.

\begin{corollary}\label{cor:lower-bnd} Let $(\I, G, S)$ be an optimization problem class and let $(\cQ, H)$ be an oracle for $\I$. If $\cA$ is an $\epsilon$-approximation algorithm for $(\I, G, S)$ using $(\cQ, H)$ for some $\epsilon \geq 0$, then $$\icomp\textstyle{_\epsilon} \leq \icomp_{\cA} \leq \comp_{\cA}.$$
\end{corollary}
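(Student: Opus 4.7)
The plan is to handle the two inequalities separately, since they have quite different flavors. The right-hand inequality $\icomp_{\cA} \leq \comp_{\cA}$ is essentially a bookkeeping observation already flagged in Remark~\ref{rem:lower bound}: under the standing assumption that $\cA$ reads every bit (or symbol) of each oracle response it receives, and since each such read counts as at least one elementary operation, the sum of response lengths on any instance $I$ is bounded above by the total number of elementary operations $\cA$ performs on $I$. Hence $\icomp_{\cA}(I) \leq \comp_{\cA}(I)$ for every $I \in \I$, and taking the supremum over $\I$ preserves the inequality.

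For the left-hand inequality $\icomp_\epsilon \leq \icomp_{\cA}$, the strategy is to manufacture a specific adaptive query strategy $D_\cA$ out of the algorithm $\cA$ and show that it already achieves the bound. Given a finite set of query-response pairs, define $D_\cA$ to be the query that $\cA$ would issue next when simulated on those responses (with the empty input tape). Because $\cA$ is a deterministic oracle Turing machine whose only external input comes from the oracle responses, $D_\cA$ is a well-defined function of the prior transcript, and moreover, on every instance $I \in \I$, the transcript $\Pi(D_\cA, I)$ coincides exactly with the sequence of query-response pairs encountered during $\cA$'s run on $I$. In particular, if $\cA$ makes $k$ queries on $I$, then $Q_k(D_\cA, I) = Q(I)$ (the query set from Theorem~\ref{thm:eps-condition}) and $\sum_{r \in R_k(D_\cA,I)} |r| = \icomp_{\cA}(I)$.

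Now I apply Theorem~\ref{thm:eps-condition} to the algorithm $\cA$: after $\cA$ halts on $I$, we have $\bigcap_{I' \in V(I,Q(I))} S(I',\epsilon) \neq \emptyset$. This means the integer $k$ just described is an admissible choice in the infimum defining $\icomp_\epsilon(D_\cA, I)$, giving $\icomp_\epsilon(D_\cA, I) \leq \icomp_{\cA}(I)$. Taking the supremum over $I \in \I$ yields $\icomp_\epsilon(D_\cA) \leq \icomp_{\cA}$, and the infimum over adaptive strategies in the definition of $\icomp_\epsilon$ then gives $\icomp_\epsilon \leq \icomp_\epsilon(D_\cA) \leq \icomp_{\cA}$. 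The main thing to be careful about is ensuring that $D_\cA$ genuinely qualifies as an adaptive query strategy in the sense of the preceding definition, i.e., that its next query depends only on the recorded history of previous query-response pairs; this is precisely what determinism of $\cA$ together with the empty initial input buys us, so no additional hypothesis is needed.
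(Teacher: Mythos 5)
Your proposal is correct and follows essentially the same route as the paper, which derives the corollary directly from Remark~\ref{rem:lower bound} (for $\icomp_{\cA} \leq \comp_{\cA}$) and Theorem~\ref{thm:eps-condition} (for $\icomp_\epsilon \leq \icomp_{\cA}$) without spelling out the details. Your construction of the induced adaptive strategy $D_\cA$ and the observation that the algorithm's halting point is an admissible $k$ in the infimum defining $\icomp_\epsilon(D_\cA, I)$ is exactly the intended filling-in of that one-line derivation.
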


\begin{remark} If $S, S'$ are two different solution operators for $\I, G$ such that $S(I,\epsilon) \subseteq S'(I,\epsilon)$ for all $I \in \I$ and $\epsilon\geq 0$, i.e., the operator $S$ is stricter than $S'$, then the complexity measures with respect to $S$ are at least as large as the corresponding measures with respect to $S'$.
\end{remark}

\begin{remark} In the literature, $\icomp_\epsilon$ is often referred to as the {\em analytical complexity} of the problem class (see, e.g.,~\cite{Nesterov-Book04}). We prefer the phrase {\em information complexity} since we wish to have a unified framework for continuous and discrete optimization and ``analytical" suggests problems that are numerical in nature or involve the continuum. Another term that is used in the literature is {\em oracle complexity}. This is better, in our opinion, but still has the possibility to suggest the complexity of implementing the oracle, rather than the complexity of the queries. Since $\icomp_\epsilon$ is very much inspired by information theory ideas, we follow the trend~\cite{traub1983information,nemirovski1994efficient,braun2017lower,carmon,yudin1976informational} of using the term {\em information complexity (with respect to an oracle)}.

$\comp_{\cA}$ is sometimes referred to as {\em arithmetic complexity}~\cite{Nesterov-Book04} or {\em combinatorial complexity}~\cite{traub1983information} of $\cA$. We prefer to stick to the more standard terminology of simply {\em (worst case) complexity} of the algorithm $\cA$.
\end{remark}

\section{What is the size of an optimization problem?} 
\subsection{Size hierarchies} The notions of complexity defined so far are either too fine or too course. At one extreme is the instance dependent notions $\comp_{\cA}(I)$, $\icomp_{\cA}(I)$ and $\icomp_\epsilon(D,I)$, and at the other extreme are the worst case notions $\comp_{\cA}$, $\icomp_{\cA}$ and $\icomp_\epsilon$. It is almost always impossible give a fine tuned analysis of the instance based complexity notions; on the other hand, the worst case notions give too little information, at best as a function of $\epsilon$, and in the worst case, these values are actually $\infty$ for most problem classes of interest. Typically, a middle path is taken where a countable hierarchy of the problem class is defined and the complexity is analyzed as a function of the levels in the hierarchy.

\begin{definition}\label{def:size} Let $(\I, G, S)$ be an optimization problem class. A {\em size hierarchy} is a countable, increasing sequence $\I_1 \subseteq \I_2 \subseteq \I_3 \subseteq \ldots$ of subsets of $\I$ such that $\I = \bigcup_{k\in \N} \I_k$. The {\em size of any instance $I$ with respect to a size hierarchy} is the smallest $k\in \N$ such that $I \in \I_k$.

The (worst case) complexity of any algorithm $\cA$ for the problem, with respect to the size hierarchy, is defined naturally as $$\comp\textstyle{_{\cA}}(k) := \sup_{I\in \I_k} \comp\textstyle{_{\cA}}(I).$$

Similarly, the (worst case) information complexity of any algorithm $\cA$ for the problem, with respect to the size hierarchy, is defined as $$\icomp\textstyle{_{\cA}}(k) := \sup_{I\in \I_k} \icomp\textstyle{_{\cA}}(I),$$ and the (worst case) $\epsilon$-information complexity of the problem class, with respect to the size hierarchy, is defined as $$\icomp\textstyle{_\epsilon}(k) := \inf_D\;\;\sup_{I\in \I_k}\;\;\icomp\textstyle{_\epsilon}(D,I),$$ where the infimum is taken over all adaptive strategies $D$.
\end{definition}

\begin{example}\label{ex:concrete-size} We review the standard size hierarchies for the problems considered in Example~\ref{ex:concrete-ex}.
\begin{enumerate}
\item In the TSP problem class defined in Example~\ref{ex:concrete-ex}, the standard ``binary encoding" size hierarchy defines $\I_k$ to be all instances such that $\sum_{i,j=1}^n \lceil\log(d_{ij})\rceil \leq k$ (so $k$ must be at least $n^2$), where $d_{ij}\in \Z_+$ are the intercity distances. If one works with real numbers as distances, the size is defined using the sizes of the real number oracles (see Definition~\ref{def:real-oracles}). If one focuses on the so-called {\em Euclidean TSP} instances, then one can define a different size hierarchy based on the number of bits needed to encode the coordinates of the cities (or sizes of the real number oracles).

Another alternative is to simply define $\I_k$ to be all instances with at most $k$ cities.
\item For MILPs, the standard ``binary encoding" size hierarchy defines $\I_k$ to be all instances such that the total number of bits (or sizes of real number oracles) needed to encode all the entries of $A,B,b, c$ is at most $k$. Another alternative is to simply define $\I_k$ as those instances where $m(n+d) \leq k$, or even simply those instances with $n+d \leq k$.

\item For nonlinear optimization problems of the form~\eqref{eq:nonlinear-opt}, often the notion of ``binary encoding" is not meaningful. Consider, for example, the problem of minimizing a linear function $f(x) = c^Tx$ over a full-dimensional, compact convex body $C$ given via a separation oracle. A size hierarchy that has been commonly used in this setting defines $\I_k$ as follows: An instance $I \in \I_k$ if there exist rational numbers $R, r$ such that $C$ is contained in the ball of radius $R$ around the origin, $C$ also contains a ball of radius $r$ inside it (the center may not be the origin), and the total number of bits needed to encode $R$, $r$ and the coordinates of $c$ is at most $k$. See~\cite{lovasz1986algorithmic,GroetschelLovaszSchrijver-Book88} for a fuller discussion and other variants. 
\end{enumerate}
\end{example}

The idea of a size hierarchy is meant to formalize the notion that problems with larger size are ``harder" to solve in the sense that it should take an algorithm longer to solve them. This obviously is often a subjective matter, and as discussed in the above examples, different size hierarchies may be defined for the same optimization problem class. The complexity measures as a function of size is very much dependent on this choice (and also on the model of computation because different models measure complexity in ways that are different from the oracle Turing machine model).

Even within the Turing machine model of computation, a classical example of where different size hierarchies are considered is the optimization problem class of {\em knapsack problems}. Here, one is given $n$ items with weights $w_1, \ldots, w_n \in \Z_+$ and values $v_1, \ldots, v_n \in \Z_+$, and the goal is to find the subset of items with maximum value with total weight bounded by a given budget $W \in \Z$. The standard ``binary encoding" size hierarchy defines the level $\I_k$ to be all problems where $\sum_{i=1}^n (\lceil\log w_i\rceil + \lceil\log v_i\rceil) + \lceil\log W\rceil \leq k$. However, one can also stratify the problems by defining $\I_k$ to be all problems where $\sum_{i=1}^n (\lceil\log w_i\rceil + \lceil\log v_i\rceil) + W \leq k$. The well-known dynamic programming based algorithm has complexity $\comp_{\cA}(k)$ which is exponential in $k$ with respect to the first size hierarchy, while it is polynomial in $k$ with respect to the second size hierarchy. 

The standard ``binary encoding" size hierarchies are the most commonly used ones, motivated by the fact that one needs these many bits to ``write down the problem" for the Turing machine to solve. As we see above, if one develops a unified theory for discrete and continuous optimization based on oracle Turing machines (or other more flexible models of computation), the ``binary encoding" idea loses some of its appeal. And even within the realm of discrete optimization on conventional Turing machines, there is no absolute objective/mathematical principle that dictates the choice of a size hierarchy and, in our opinion, there is subjectivity in this choice. Therefore, complexity measures as a function of the size hierarchy have this subjectivity inherent in them. 

\subsection{More fine-grained parameterizations}

The approach of a size hierarchy as discussed in the previous section parameterizes the instances in a one-dimensional way using the natural numbers. One may choose to stratify instances using more than one parameter and define the complexity measures as functions of these parameters. This is especially useful in continuous, numerical optimization settings where the standard ``binary encoding" is unavailable to define a canonical size hierarchy as in the case of discrete optimization problems. For example, consider the problems of the form~\eqref{eq:nonlinear-opt} where the functions $f, g_1, \ldots, g_m$ are convex, with the solution operator $S(I,\epsilon)$ consisting of those solutions that satisfy the constraints up to $\epsilon$ slack, i.e., $g_i(x) \leq \epsilon$ and have objective value $f(x)$ within $\epsilon$ of the optimal value. We also consider access to a first-order oracle for these functions (see Example~\ref{ex:concrete-oracles}, part 3.). We now create a semi-smooth parameterization of the family of instances using three parameters $d \in \N$ and $R, M \in \R$: $\I_{d, R,M}$ are those instances such that 1) the domains of the functions is $\R^d$, 2) the feasible region $\{x\in \R^d: g_i(x) \leq 0\; i=1, \ldots, m\}$ is contained in the box $\{x \in \R^d: \|x \|_\infty \leq R\}$, and 3) $f, g_1, \ldots, g_m$ are Lipschitz continuous with Lipschitz constant $M$ on this box (a convex function is Lipschitz continuous on any compact set). One can then define the complexity measures $\comp_{\cA}(d,R,M)$ and $\icomp_{\cA}(d,R,M)$ for any $\epsilon$-approximation algorithm $\cA$, and the algorithm independent complexity measure $\icomp_\epsilon(d,R,M)$, as functions of these three parameters (as well as $\epsilon$, of course). As an example one can show $\icomp_{\epsilon}(d,M,R) \in \Theta(d\log(\frac{MR}{\epsilon}))$; see Section~\ref{sec:information-comp}. 

As in the case of size hierarchies, the goal is to tread a middle path between the two extremes of very fine-grained instance dependent measures, or worst case values over all instances (as a function of $\epsilon$). Parameterizing the problem class with more than one parameter gives a little more information. Several examples of such parameterizations for classes of convex optimization problems is presented in~\cite{Nemirovski_Yudin_book}. Our discussion in the next part will involve similar parameterizations of mixed-integer optimization problems. See also the related area of computational complexity theory of {\em parameterized complexity} and {\em fixed-parameter tractability (FPT)}~\cite{downey2012parameterized}.

\part{\Large Complexity of mixed-integer convex optimization}

After setting up the framework in Part I, we now derive concrete results for the class of mixed-integer convex optimization problems. More precisely, we will consider problems of the form

\begin{equation}\label{eq:MICO}\inf\{f(x,y): (x,y) \in C, (x,y) \in \Z^n \times \R^d\}.\end{equation} where $f:\R^n\times \R^d \to \R$ is a convex (possibly nonsmooth) function and $C\subseteq \R^n\times \R^d$ is a closed, convex set, i.e., an instance $I$ is given by $f,C$. We will consider the solution operator $S(I,\epsilon)$ to be all feasible solutions in $C$ that have value at most $\epsilon$ more than the optimal value. One could allow solutions within $\epsilon$ distance of $C$ and all of the results given below can be modified accordingly, but we will consider only truly feasible solutions. 

The following definition will be useful in what follows.

\begin{definition} A {\em fiber box} in $\Z^n\times \R^d$ is a set of the form $\{x\}\times [\ell_1, u_1]\times \ldots [\ell_d, u_d]$ where $x \in \Z^n$ and $\ell_i, u_i \in \R$ for $i=1, \ldots, d$. The {\em length} of the box in coordinate $j$ is $u_j - \ell_j$. The {\em width} of such a fiber box is the minimum of $u_j - \ell_j$, $j=1, \ldots, d$. If $n=0$, a fiber box is simply a hypercuboid in $\R^d$. A fiber box is the empty set if $u_i < \ell_i$ for some $i=1, \ldots, d$. If all $\ell_i = -\infty$ and all $u_i = \infty$, then the set is simply called a {\em fiber} over $x$.
\end{definition}

\section{$\epsilon$-information complexity}\label{sec:information-comp}

In this section, we establish the best-known lower and upper bounds on the $\epsilon$-information complexity of~\eqref{eq:MICO} in the literature. To get the tightest bounds, we will restrict our attention to problems with bounded feasible regions and therefore a minimum solution exists. Moreover, we will also focus on ``strictly feasible" instances. 

\begin{definition}\label{def:instance-class} We parameterize the instances using five parameters $n, d \in \N$ and $R, M, \rho \in \R$. $\I_{n, d, R,M, \rho}$ are those instances such that

\begin{enumerate}\item The domain of $f$ and $C$ are both subsets of $\R^{n} \times \R^{d}$.
\item $C$ is contained in the box $\{z \in \R^n \times \R^d: \|z \|_\infty \leq R\}$, and
\item $f$ is Lipschitz continuous with respect to the $\| \cdot \|_\infty$-norm with Lipschitz constant $M$ on any fiber box of the form $\{x\} \times [-R,R]^d$ with $x \in [-R, R]^n \cap \Z^n$, i.e., for any $(x, y), (x, y')$ with $\|y - y'\|_\infty \leq R$, $|f(x,y) - f(x,y')|\leq M\|y - y'\|_\infty$.
\item If $(x^\star, y^\star)$ is the optimum solution, then there exists $\hat y \in \R^d$ and $0 < \rho \leq 1$ such that $\{(x^\star,y): \|y - \hat y\|_\infty \leq \rho\} \subseteq C$, i.e., there is a ``strictly feasible" point $(x^\star, \hat y)$ in the same fiber as the optimum $(x^\star, y^\star)$ with a fiber box of width $\rho$ in $\R^d$ (the continuous space) around $(x^\star, \hat y)$ contained in $C$. Note that if $d=0$ (the {\em pure integer} case), then this requirement becomes vacuous; consequently, the bounds below in Theorem~\ref{thm:information-comp} for the pure integer case do not involve $\rho$. Also, the assumption $\rho \leq 1$ is not restrictive in the sense that if the condition is satisfied for some $\rho > 0$, then it is also satisfied for $\min\{\rho, 1\}$. Thus, one could alternatively leave this condition out, and the stated bounds below will be modified by replacing $\rho$ with $\min\{\rho, 1\}$.
\end{enumerate}
Table~\ref{tab:parameters} gives a synopsis.
\end{definition}

\begin{table}[htbp]
\begin{center}
\begin{tabular}{|c|l|}
\hline
Parameter & Meaning \\
\hline
& \\
$n$ & number of integer variables \\
& \\
$d$ & number of continuous variables  \\
& \\
$R$ & Boundedness parameter for the feasible region \\
& \\
$\rho$ & Strict feasibility parameter for the feasible region\\
& \\
$M$ & Lipschitz constant for the objective function \\
& \\
\hline
\end{tabular}
\end{center}
\caption{Parameters of the problem instance used to state the complexity bounds}\label{tab:parameters}
\end{table}

In the statement of the result, we will ignore the sizes of the subgradients, function values and separating hyperplanes reported in the answers to oracle queries (which is technically included in our definition of $\icomp_\epsilon$). Thus, we will give lower and upper bounds on the {\em number} of oracle queries only. Taking the sizes of the subgradients and real numbers involved in the answers leads to several interesting questions which, to the best of our knowledge, have not been fully worked out in detail. To keep the discussion aligned with the focus in the literature, we leave these subtleties out of this presentation. This obviously has implications for space information complexity as well.

The bounds below for the mixed-integer case $n, d \geq 1$ are minor adaptations of arguments that first appeared in~\cite{oertel2014integer,basu2017centerpoints}. The main difference is that our presentation here uses the more general information theoretic language developed in Part I, whereas the results in~\cite{oertel2014integer,basu2017centerpoints} were stated for a certain class of algorithms called {\em cutting plane algorithms} (see Section~\ref{sec:branch-and-cut}).

\begin{theorem}\label{thm:information-comp} Let the oracle access to an instance $f, C$ of for~\eqref{eq:MICO} in $\I_{n, d, R,M, \rho}$ from Definition~\ref{def:instance-class} be through a separation oracle for $C$, and a first-order oracle for $f$, i.e., one can query the function value and the subdifferential for $f$ at any point. As a quick legend: $n$

\paragraph{Lower bounds}
\begin{itemize}
\item If $n,d\geq 1$, $$\icomp\textstyle{_\epsilon}(n,d,R,M,\rho) \in \Omega\left(d2^n \log\left(\frac{R}{\rho}\right)\right).$$
\item If $d=0$, $$\icomp\textstyle{_\epsilon}(n,d,R,M,\rho) \in \Omega\left(2^n \log\left(R\right)\right).$$
\item If $n=0$, $$\icomp\textstyle{_\epsilon}(n,d,R,M,\rho) \in \Omega\left(d \log\left(\frac{MR}{\rho\epsilon}\right)\right).$$

\end{itemize}
\paragraph{Upper bounds}
\begin{itemize}
\item If $n, d \geq 1$
$$\icomp\textstyle{_\epsilon}(n,d,R,M,\rho) \in O\left((n+d)d2^n  \log\left(\frac{MR}{\rho\epsilon}\right)\right).$$
\item If $d=0$
$$\icomp\textstyle{_\epsilon}(n,d,R,M,\rho) \in O\left(n2^n  \log(R)\right).$$
\item If $n=0$
$$\icomp\textstyle{_\epsilon}(n,d,R,M,\rho) \in O\left(d  \log\left(\frac{MR}{\rho\epsilon}\right)\right).$$
\end{itemize}
\end{theorem}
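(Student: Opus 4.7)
I would split the argument into the three regimes $n=0$, $d=0$, and $n,d\geq 1$, since each uses a qualitatively different cutting-plane scheme and a different adversary construction. In every case the upper bound arises from a cutting-plane algorithm: at step $k$ the algorithm picks a point $z_k$ in the current localizer $L_k \subseteq [-R,R]^{n+d}$, queries either the separation oracle for $C$ (which returns a valid cut when $z_k \notin C$) or the first-order oracle for $f$ at $z_k$ (whose subgradient gives a cut when $z_k$ is feasible but not yet optimal), and updates $L_{k+1}$ by intersecting $L_k$ with the resulting halfspace through $z_k$. The design freedom is the rule for choosing $z_k$, and in each regime it is tuned so that every possible cut removes a predictable fraction of a carefully chosen ``mass'' from $L_k$. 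Termination occurs once strict feasibility (parameter $\rho$) and $M$-Lipschitzness together guarantee that every surviving feasible point is $\epsilon$-optimal. All lower bounds follow from Theorem~\ref{thm:eps-condition}: one exhibits a family of instances whose transcripts coincide under every too-short query strategy, yet whose $\epsilon$-approximate solution sets have empty common intersection.

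\textbf{Continuous case $n=0$.} I would run the center-of-gravity method starting from $L_0 = [-R,R]^d$, with $z_k$ the centroid of $L_k$; Grunbaum's theorem ensures every halfspace cut through the centroid loses at least a constant fraction of $\vol(L_k)$. After $O(d\log(MR/(\rho\epsilon)))$ iterations, $\vol(L_k) \le (\rho\epsilon/M)^d$, and combining the $\rho$-ball inside $C$ near the optimum with $M$-Lipschitzness of $f$ then forces every feasible point of $L_k$ to be $\epsilon$-optimal. The matching $\Omega(d\log(MR/(\rho\epsilon)))$ lower bound is the classical Nemirovski-Yudin resisting-oracle construction via a family of piecewise-linear convex objectives, in which each first-order query reveals at most one bit of locating information.

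\textbf{Pure integer case $d=0$.} For the upper bound I would run an integer-centerpoint cutting-plane algorithm: at each step pick $z_k \in L_k \cap \Z^n$ such that every closed halfspace through $z_k$ contains at least a $1/2^n$ fraction of $L_k \cap \Z^n$; existence of such a $z_k$ follows from Doignon's $2^n$-Helly theorem for integer points. Each cut shrinks $|L_k \cap \Z^n|$ by a factor at most $1 - 1/2^n$, so starting from $|L_0 \cap \Z^n| \le (2R+1)^n$ we locate a single (necessarily optimal) lattice point within $O(n 2^n \log R)$ queries. For the lower bound I would build an adversary that hides the unique feasible integer point in $[-R,R]^n \cap \Z^n$ inside nested Doignon-tight configurations at $\Omega(\log R)$ geometric scales, each scale requiring $\Omega(2^n)$ queries to resolve by Theorem~\ref{thm:eps-condition}.

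\textbf{Mixed-integer case $n,d\geq 1$ (main obstacle).} Following~\cite{oertel2014integer,basu2017centerpoints}, I would unify the above schemes via the fiber mass
\[
\mu(L) \;:=\; \sum_{x \in \Z^n}\; \vol_d\bigl(\{y \in \R^d : (x,y) \in L\}\bigr).
\]
The key ingredient is a mixed-integer centerpoint theorem: for every convex $L$ with $\mu(L)>0$ there exists $z \in L$ such that any halfspace through $z$ retains at least a $1/c_{n,d}$ fraction of $\mu(L)$, with $c_{n,d} \in O((n+d) 2^n)$. Running the cutting-plane scheme at these centerpoints drives $\mu(L_k)$ below $(\rho\epsilon/M)^d$ within $O((n+d)d 2^n \log(MR/(\rho\epsilon)))$ queries, after which strict feasibility and Lipschitzness force $\epsilon$-optimality, giving the upper bound. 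The lower bound calls for a two-layered adversary: the integer coordinate of the optimum is hidden in a Doignon-tight configuration contributing $\Omega(2^n)$ queries to resolve, while inside the chosen fiber the continuous optimum is hidden at scale $\rho$ within $[-R,R]^d$, forcing an additional $\Omega(d \log(R/\rho))$ bisection queries. The main obstacle I anticipate is precisely this adversary synchronization: one must show via Theorem~\ref{thm:eps-condition} that no adaptive strategy can amortize queries across the integer and continuous directions to beat the product $d\cdot 2^n \log(R/\rho)$, despite the strategy being free to interleave separation and first-order queries at will. A secondary technical hurdle is obtaining the sharp $2^n$-scaling in the mixed-integer centerpoint theorem, since any weaker bound degrades the upper bound accordingly.
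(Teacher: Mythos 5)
Your upper-bound plan is essentially the paper's (centerpoint cutting planes measured by mixed-integer fiber volume, which is exactly the paper's $\nu$), but your lower bound for the mixed-integer case $n,d\geq 1$ has a genuine gap, and it is the heart of the theorem. The two-layered adversary you describe --- hide the integer coordinate in one Doignon-tight configuration ($\Omega(2^n)$ queries) and then hide the continuous optimum inside that single fiber ($\Omega(d\log(R/\rho))$ queries) --- only yields the \emph{additive} bound $\Omega\left(2^n + d\log(R/\rho)\right)$, not the product $\Omega\left(d2^n\log(R/\rho)\right)$. You correctly flag ``adversary synchronization'' as the obstacle, but that is precisely the idea that must be supplied, and the paper's construction supplies it: the adversary does not hide the optimum in a single fiber at all. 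It keeps \emph{all} $2^n$ fibers alive as fiber boxes simultaneously, and answers every separation query landing in a fiber with a halfspace that is \emph{rotated} so that it shrinks only the queried fiber's box (halving it in one rotating coordinate) while leaving every other fiber's box completely intact. A per-fiber counter then shows that a fiber cannot be eliminated until it has individually absorbed $d\log_2(R/(3\rho))$ queries; hence with fewer than $2^n \cdot d\log_2(R/(3\rho))$ total queries some fiber still contains a box of width $3\rho$, inside which two disjoint width-$\rho$ boxes give two instances (both with \emph{constant} objectives, so the bound is purely feasibility-based --- this is also why $M$ and $\epsilon$ do not appear) that have identical transcripts but disjoint feasible mixed-integer sets, contradicting Theorem~\ref{thm:eps-condition}. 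Your pure-integer ($d=0$) adversary has the same issue in milder form: ``nested Doignon-tight configurations at $\log R$ scales'' is not worked out, and the paper instead runs the same per-fiber accounting over the $2^{n-1}$ discrete fibers $\{x\}\times\{0,\ldots,\lfloor R\rfloor\}$, each of which can resist $\log_2 R$ bisection-style queries.

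Two further points on the upper bound. First, your termination criterion is misstated: driving the localizer's mass below $(\rho\epsilon/M)^d$ does \emph{not} force ``every surviving feasible point'' to be $\epsilon$-optimal (a small-volume localizer can still contain feasible points of large objective value). The correct mechanism, as in the paper, is to output the best \emph{queried} feasible point $z_{\min}$: by Lemma~\ref{lem:eps-sol-vol} the $\epsilon$-optimal set within the optimal fiber has $d$-dimensional volume at least $\left(\frac{\rho\epsilon}{2MR}\right)^d$, so once the search region's mass drops below this, some $\epsilon$-optimal point must have been cut off by a subgradient inequality $\langle h_j, \cdot - z_j\rangle \leq 0$, which forces $f(z_{\min}) \leq f(z_j) < OPT + \epsilon$. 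Second, your centerpoint constant $c_{n,d} \in O((n+d)2^n)$ is too weak to give the stated bound: it yields $O\left((n+d)^2 2^n \log\left(\frac{MR}{\rho\epsilon}\right)\right)$, whereas the claimed $O\left((n+d)d2^n\log\left(\frac{MR}{\rho\epsilon}\right)\right)$ requires the sharp guarantee $h_S(\hat x) \geq \frac{1}{2^n(d+1)}\nu(S)$ of Theorem~\ref{thm:centerpoint}; you anticipated this hurdle, but it is not optional.
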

\bigskip
\bigskip

Note that when $n=0$, i.e., we consider continuous convex optimization with no integer variables, we have $\icomp\textstyle{_\epsilon}(n,d,R,M,\rho) = \Theta\left(d  \log\left(\frac{MR}{\rho\epsilon}\right)\right)$, giving a tight characterization of the complexity. In fact, these results can be obtained for a much broader class of oracles that include first-order/separation oracles as special cases; see~\cite{Nemirovski_Yudin_book,Nesterov-Book04,nemirovski1994efficient,braun2017lower}.

For pure integer optimization with $d=0$, our upper and lower bounds are off by a linear factor in the dimension, which is of much lower order compared to the dominating term of $2^n\log(R)$. Put another way, both bounds are $2^{O(n)}\log(R)$. The lower bounds come from the feasibility question (see the proofs below). Additionally, since the strict feasibility assumption is vacuous and for small enough $\epsilon > 0$, $S(I,\epsilon)$ is the set of exact optimum solutions, $M,\epsilon$ and $\rho$ do not play a role in the upper and lower bounds; in particular, they are the bounds for obtaining exact solutions ($\epsilon = 0$) as well. 

There seems to be scope for nontrivial improvement in the bounds presented for the mixed-integer case, i.e., $n, d \geq 1$:

\begin{enumerate}
\item It would be nice to unify the lower bound for $n=0$ (the continuous case) and $n\geq 1$ (the truly mixed-integer case). The proof below for $n, d\geq 1$ is based on the feasibility question, which is why $M$ and $\epsilon$ do not appear in the lower bound. This is inspired by the proof technique in~\cite{basu2017centerpoints}. We do not see a similar way to incorporate the objective function parameters to match the upper bound. We suspect that one should be able to prove the stronger lower bound of $\Omega\left(d2^n \log\left(\frac{MR}{\rho\epsilon}\right)\right)$, but at present we do not see how to do this and we are not aware of any existing literature that achieves this. 
\item When one plugs in $n=0$ in the mixed-integer upper bound ($n, d \geq 1$), one does not recover the tight upper bound for $n=0$; instead, the bound is off by a factor of $d$. We believe this can likely be improved, for example, if Conjecture~\ref{conj:mixed-center} below is proved to be true in the future. Then one would have an upper bound of $O\left((n+d)2^n\log\left(\frac{MR}{\rho\epsilon}\right)\right)$ in the mixed-integer case that more accurately generalizes both the pure continuous ($n=0$) and pure integer ($d=0$) upper bounds.
\end{enumerate}

\subsection{Proof of the lower bounds in Theorem~\ref{thm:information-comp}} The general strategy is the following: Given any adaptive query sequence $D$, we will construct two instances $(f_1, C_1), (f_2, C_2) \in \I_{n, d, R,M, \rho}$ such that the transcripts $\Pi_k(D,(f_1, C_1))$ and $\Pi_k(D,(f_2, C_2))$ are equal for any $k$ less than the lower bound, but $S((f_1, C_1), \epsilon) \cap S((f_2, C_2),\epsilon) =\emptyset$.

\paragraph{The mixed-integer case ($n,d\geq 1$).} We will show that $\icomp\textstyle{_\epsilon}(n,d,R,M,\rho) \geq d2^n \log_2\left(\frac{R}{3\rho}\right).$ We construct $C_1, C_2 \subseteq \R^{n}\times \R^d$ such that $C_1 \cap C_2 \cap (\Z^n \times \R^d)= \emptyset$,  both sets satisfy the strict feasibility condition dictated by $\rho$, and any separation oracle query from $D$ on $C_1$ and $C_2$ has the same answer. Our instances will consist of these two sets as feasible regions and $f_1 = f_2$ as constant functions, thus any first-order oracle query in $D$ will simply return this constant value and $0$ as a subgradient. Since there is no common feasible point, $S((f_1, C_1), \epsilon) \cap S((f_2, C_2),\epsilon) =\emptyset$ as required.

The construction of $C_1$ and $C_2$ goes as follows. Since the function oracle calls are superfluous, we may assume the $k$ (adaptive) queries $\{q_1, \ldots, q_k\}$ to be all separation oracle queries. Begin with $X_0 = [0,1]^n \times [0,R]^d$. We create a nested sequence $X_0 \supseteq X_1 \supseteq X_2 \supseteq \ldots \supseteq X_k$ such that $X_i \cap (\{x\}\times \R^d)$ is a fiber box (possibly empty) for any $x\in \{0,1\}^n$. $X_i$ is defined inductively from $X_{i-1}$, using the query $q_i$. For every $\tilde x \in \{0,1\}^n$, we maintain a counter $\#\tilde x(i)$ which will keep track of how many $q_j$, $j\leq i$ queried a point of the form $(\tilde x, y)$ inside $X_{j-1}$ for some $y \in \R^d$.

If $q_i$ queries $(x^i,y^i) \not\in X_{i-1}$, then we simply report any hyperplane separating $(x^i,y^i)$ from $X_{i-1}$ as the answer to $q_i$ and define $X_i = X_{i-1}$. If $q_i$ queries $(x^i,y^i) \in X_{i-1}\setminus (\Z^n \times \R^d)$ (i.e., $x^i \not\in \Z^n)$, we define $X_i = X_{i-1}$ and the answer to the query $q_i$ is that $(x^i,y^i)$ is in the set. 

Suppose now $(x^i,y^i) \in X_{i-1}\cap (\Z^n \times \R^d)$. If $\#x^i(i-1) \geq d \log_2\left(\frac{R}{3\rho}\right)$ then we report a halfspace $H$ that separates $X_{i-1} \cap (\{x_i\}\times \R^d)$ from the rest of the fibers $X_{i-1}\cap(\{x\}\times \R^d)$ for $x \neq x_i$, and define $X_i = X_{i-1}\cap H$. If $\#x^i(i-1) < d \log_2\left(\frac{R}{3\rho}\right)$ then select the coordinate $j = (\#x^i(i-1) \mod d) + 1$ and define the hyperplane $\{(x,y) \in \R^n\times \R^d: y_j = y^i_j\}$. Let $B$ denote the fiber box $X_{i-1}\cap (\{x^i\}\times \R^d)$. Consider the separation with a halfspace $\hat H$ with this hyperplane such that $B \cap \hat H$ has length in coordinate $j$ to be at least half of the length $B$ in coordinate $j$. We now rotate this hyperplane and halfspace $\hat H$ to obtain a halfspace $H$ such that $X_{i-1} \cap H$ has the same intersection as $X_{i-1}$ with $(\{x\}\times \R^d)$ for $x \neq x_i$. In other words, all other mixed-integer fibers in $X_{i-1}$ are maintained. Define $X_i = X_{i-1} \cap H$ and $H$ as the separating halfspace for query $q_i$. Update $\#x^i(i) = \#x^i(i-1) + 1$. Note that the above construction ensures inductively that for any $i\in \{1, \ldots, k\}$, the set $X_i \cap (\{x\}\times \R^d)$ is a fiber box for $x \in \{0,1\}^n$.

Since $\sum_{x\in \{0,1\}^n} \#x(k) \leq k < 2^n\cdot d \log_2\left(\frac{R}{3\rho}\right)$, we observe that $X_k$ contains a fiber box $B$ of width at least $3\rho$. Thus, we can select two fiber boxes $B_1, B_2 \subseteq B$ such that $B_1 \cap B_2 = \emptyset$, and $B_1$ and $B_2$ have width $\rho$. For $i=1,2$, define $C_i$ to be the convex hull of $B_i$ and all the points queried by $D$ that were reported to be in the set. We observe that $C_i\cap (\Z^n\times \R^d) = B_i$ for $i=1,2$ and thus we have no common feasible points in $C_1, C_2$. This completes the proof for $d\geq 1$.

\paragraph{The pure integer case ($d=0$).} The proof proceeds in a similar manner to the mixed-integer case ($n, d \geq 1)$ with $X_0 = [0,1]^{n-1} \times [0,\lfloor R\rfloor] \subseteq \R^n$. The ``fibers'' are now $\{x\} \times \{0,1,\ldots, \lfloor R\rfloor\}$. If $k < 2^n \log_2(R)$, one can again construct $C_1, C_2 \subseteq X_0$ such that $C_1 \cap C_2 \cap \Z^n = \emptyset$ by an inductive argument based on the queries from $D$, and take $f_1, f_2$ as constant functions.

\paragraph{The pure continuous case ($n=0$).} We omit the proof as this has appeared in many different places in the literature~\cite{Nemirovski_Yudin_book,Nesterov-Book04,nemirovski1994efficient,braun2017lower}. The idea is very similar to what was presented above for the general mixed-integer case. One proves that $$\begin{array}{rcl}\icomp\textstyle{_\epsilon}(d,R,M,\rho) & \geq &\max\left\{d \log_2\left(\frac{R}{3\rho}\right),d \log_{2}\left(\frac{MR}{8\epsilon}\right)\right\} \\ & \geq & \frac{d \log_2\left(\frac{R}{3\rho}\right) + d \log_{2}\left(\frac{MR}{8\epsilon}\right)}{2} \\ & \in & \Omega\left(d \log\left(\frac{MR}{\rho\epsilon}\right)\right).\end{array}$$
If $k < d \log_2\left(\frac{R}{3\rho}\right)$ one can appeal to the mixed-integer case above. In fact, there is no rotation of halfspaces necessary as there are no integer fibers. If $k < d \log_{2}\left(\frac{MR}{8\epsilon}\right)$, one constructs two different convex functions $f_1, f_2$ while the feasible region can be taken to be $[0,R]^d$ in both cases. The details are a little more complicated than the separation oracle case, since the function values and the subgradients have to be more carefully engineered. We refer the reader to the references cited above for the details.\qed

\subsection{Proof of the upper bounds in Theorem~\ref{thm:information-comp}}

The idea of the upper bound hinges on a geometric concept that has appeared in several different areas of mathematics, including convex geometry, statistics and theoretical computer science.

\begin{definition} For any $S \subseteq \Z^n \times \R^d$ with $d\geq 1$, $\nu(S)$ will denote the {\em mixed-integer volume} of $S$, i.e., $$\nu(S):= \sum_{x \in \Z^n}\mu_d(S \cap (\{x\}\times \R^d)),$$ where $\mu_d$ is the standard Lebesgue measure (volume) in $\R^d$. If $d=0$, we overload notation and use $\nu(S)$ to denote the number of integer points in $S$, i.e., the counting measure on $\Z^n$.
\end{definition}

Note that if $S = C \cap (\Z^n \times \R^d)$ for a compact convex set $C \subseteq \R^n \times \R^d$, then $\nu(S)$ is finite.

\begin{definition} For any $S \subseteq \Z^n \times \R^d$ and $x \in \R^n \times \R^d$, define $$h_S(x):=\inf_{\begin{array}{c}\textrm{halfspace }H: \\ x \in H\end{array}}\;\;\nu(S \cap H).$$

The set of {\em centerpoints of $S$} is defined as $\C(S):= \argmax_{x \in S}h_S(x)$. 
\end{definition}

The above concept was first defined in Timm Oertel's Ph.D. thesis~\cite{oertel2014integer} and extensions were introduced in~\cite{basu2017centerpoints}. We refer the reader to the thesis and the cited paper, and the references therein for structural properties of $h_S$ and $\C(S)$. For our purposes, we will simply need the following result.

\begin{theorem}\label{thm:centerpoint} Let $C \subseteq \R^n \times \R^d$ be any compact, convex set and let $S = C \cap (\Z^n \times \R^d)$. Then $\C(S)$ is nonempty and $h_S(\hat x) \geq \frac{1}{2^n(d+1)}\nu(S)$ for any centerpoint $\hat x$. If $n=0$, then $h_S(\hat x) \geq \left(\frac{d}{d+1}\right)^d\nu(S) \geq \frac{1}{e}\nu(S)$.
\end{theorem}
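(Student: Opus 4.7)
The plan is to establish non-emptiness of $\C(S)$ by a compactness argument and then obtain the quantitative lower bound by the classical centerpoint argument, suitably adapted via a mixed-integer Helly-type theorem. Since $C$ is compact, the projection of $S$ onto $\Z^n$ is finite and each nonempty fiber $S\cap(\{x\}\times\R^d)$ is compact convex, so $S$ is compact as a subset of $\R^n\times\R^d$. The function $x\mapsto h_S(x)$ is upper semicontinuous on $S$: if $h_S(x_0)>c$ then every closed halfspace containing $x_0$ has $\nu(S\cap H)>c$, and this property is preserved under small perturbations of $x_0$. Hence the supremum defining $\C(S)$ is attained and $\C(S)$ is non-empty.

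For the main inequality when $n,d\geq 1$, set $\alpha:=\tfrac{1}{2^n(d+1)}$ and consider the family $\mathcal{F}$ consisting of $C$ together with every closed halfspace $H\subseteq\R^n\times\R^d$ satisfying $\nu(S\setminus H)<\alpha\,\nu(S)$. For any subfamily $\mathcal{F}'\subseteq\mathcal{F}$ of cardinality $2^n(d+1)$, a union bound gives $\nu\!\left(S\setminus\bigcap_{F\in\mathcal{F}'}F\right)<2^n(d+1)\cdot\alpha\,\nu(S)=\nu(S)$, where $C$ (if present in $\mathcal{F}'$) contributes $0$ to the bound since $S\subseteq C$. Hence $\bigcap_{F\in\mathcal{F}'}F$ meets $S$ and in particular contains a point of $\Z^n\times\R^d$. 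The mixed-integer Helly theorem of Hoffman (Helly number $(d+1)2^n$ for convex sets in $\R^n\times\R^d$ with respect to $\Z^n\times\R^d$) then produces $\hat x\in\Z^n\times\R^d$ lying in every member of $\mathcal{F}$; since $C\in\mathcal{F}$, in fact $\hat x\in S$. For any closed halfspace $H$ containing $\hat x$ in its interior one must have $\nu(S\cap H)\geq\alpha\,\nu(S)$, for otherwise the opposite closed halfspace would belong to $\mathcal{F}$ and yet exclude $\hat x$. This gives $h_S(\hat x)\geq\tfrac{1}{2^n(d+1)}\nu(S)$, and the same bound at any centerpoint since centerpoints are $h_S$-maximizers on $S$.

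The pure-integer case $d=0$ proceeds identically with the Doignon--Bell--Scarf integer Helly theorem (Helly number $2^n$) replacing Hoffman's theorem and $\nu$ interpreted as counting measure on $\Z^n$, yielding $h_S(\hat x)\geq\nu(S)/2^n$. For the pure-continuous case $n=0$, one invokes the stronger Gr\"unbaum inequality: for a compact convex body $K\subseteq\R^d$ with centroid $\bar x$, every closed halfspace containing $\bar x$ contains at least $(d/(d+1))^d$ of the $d$-dimensional volume of $K$. Applying this with $K=C$ (or, when $\dim C<d$, to its relative interior with the dimension-appropriate constant) produces the desired point, and the elementary estimate $(1-\tfrac{1}{d+1})^d\geq\tfrac{1}{e}$ gives the $\tfrac{1}{e}\nu(S)$ consequence.

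The main obstacle is the invocation of the mixed-integer Helly theorem with the sharp Helly number $(d+1)2^n$: this is the deepest input and is precisely the source of the constant $2^n(d+1)$ in the bound. Once this result is in hand, the remainder is the standard \emph{dualize to halfspaces whose $S$-complement is small} trick from the continuous centerpoint proof. A minor technical subtlety is the uniform treatment of open versus closed halfspaces and of the strict versus non-strict inequality defining $\mathcal{F}$; I would resolve this by working throughout with closed halfspaces and an arbitrarily small strict slack, which does not affect the leading constant.
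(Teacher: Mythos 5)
The paper itself contains no proof of this theorem: it cites Oertel's thesis and \cite[Theorem 3.3]{basu2017centerpoints} for the mixed-integer bound and Gr\"unbaum \cite{Gruenbaum1960} for $n=0$, and its closing remark of Section 4 explains that the result rests on the Helly number $2^n(d+1)$ of $\Z^n\times\R^d$. Your proposal reconstructs exactly that argument --- the union bound over halfspaces whose $S$-complement has $\nu$-mass below $\nu(S)/(2^n(d+1))$, the Hoffman/Doignon--Bell--Scarf Helly theorems, and Gr\"unbaum's centroid inequality --- so the approach coincides with the one the paper points to, and the core steps (the union bound, the deduction that the opposite halfspace of any ``light'' halfspace through the interior lies in $\mathcal{F}$, and the passage from interior to boundary halfspaces by continuity from above of the finite measure $\nu$) are sound.

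Two points in your write-up need repair, one of which is a genuine logical error as stated. First, your attainment argument is backwards: openness of the strict superlevel sets $\{x : h_S(x) > c\}$ is \emph{lower} semicontinuity, and a lower semicontinuous function on a compact set attains its \emph{minimum}, not its maximum; so the property you describe, even if verified, would not show that $\C(S)\neq\emptyset$. What you need is \emph{upper} semicontinuity of $h_S$ on $S$, which does hold: given $x_0\in S$, $\epsilon>0$, and a closed halfspace $H\ni x_0$ with $\nu(S\cap H)\leq h_S(x_0)+\epsilon$, translate $H$ by $x-x_0$ to obtain a halfspace containing a nearby point $x$; since $C$ is compact, $S$ has finitely many nonempty fibers, and the translated halfspace differs from $H$ only inside a slab whose $\nu$-mass tends to $0$ as $x\to x_0$, whence $\limsup_{x\to x_0}h_S(x)\leq h_S(x_0)+\epsilon$. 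Upper semicontinuity plus compactness of $S$ then gives $\C(S)\neq\emptyset$. Second, Hoffman's theorem is a statement about \emph{finite} families, while your $\mathcal{F}$ is infinite; the standard fix is to apply it to every finite subfamily (your union bound supplies the hypothesis for all subfamilies of size at most $2^n(d+1)$), and then observe that the compact sets $F\cap C\cap(\Z^n\times\R^d)$, $F\in\mathcal{F}$, have the finite intersection property, so their total intersection is nonempty. With these two routine repairs your proof is complete and matches the cited route.
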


The first bound in Theorem~\ref{thm:centerpoint} was first established in~\cite{oertel2014integer} and is a special case of a general result involving Helly numbers~\cite[Theorem 3.3]{basu2017centerpoints}. The second bound ($n= 0$) is due to Gr\"unbaum~\cite{Gruenbaum1960}. There is clearly a gap in the two cases and the following sharper lower bound is conjectured to be true~\cite{oertel2014integer,basu2017centerpoints}; a matching upper bound is given by $S = \{0,1\}^n \times \Delta_d$, where $\Delta_d$ is the standard $d$-dimensional simplex.

\begin{conj}\label{conj:mixed-center} Under the hypothesis of Theorem~\ref{thm:centerpoint}, $h_S(\hat x) \geq \frac{1}{2^n}\left(\frac{d}{d+1}\right)^d\nu(S) \geq \frac{1}{2^n}\frac{1}{e}\nu(S)$ for any $n, d \geq 0$ (both not both $0$) for any centerpoint $\hat x$.
\end{conj}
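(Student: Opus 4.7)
The plan is to prove the conjecture by combining Gr\"unbaum's centroid inequality, applied slice-by-slice in the continuous fibers, with an integer Helly-type centerpoint argument across fibers, in a way that avoids the extra factor of $(d+1)$ that the black-box Helly-number proof of Theorem~\ref{thm:centerpoint} incurs. The shape of the conjectured constant $\frac{1}{2^n}\left(\frac{d}{d+1}\right)^d$, together with the tight example $\{0,1\}^n \times \Delta_d$ — essentially the product of the extremal configuration for Doignon's integer Helly number $2^n$ with Gr\"unbaum's extremal convex body — strongly suggests a product-style argument is the right route.

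Concretely, the first step is to construct a candidate centerpoint $\hat x = (\hat p, \hat y)$ as follows. For each $p \in \Z^n$ with nonempty fiber $S_p := S \cap (\{p\}\times \R^d)$, let $c_p$ be the Gr\"unbaum centroid of $S_p$ and let $v_p := \mu_d(S_p)$. Choose $\hat p \in \Z^n$ to be a weighted integer centerpoint of the finite point set $\{(p, c_p)\}_{p : v_p > 0}$ with weights $v_p$, and set $\hat y := c_{\hat p}$. The aspirational property is that, for any halfspace $H \subseteq \R^{n+d}$ containing $\hat x$, at least a $\frac{1}{2^n}$ fraction of the $v_p$-weight of the lifted centroids $(p, c_p)$ also lies in $H$. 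Granting this, Gr\"unbaum applied in each such fiber yields $\mu_d(S_p \cap H) \geq (d/(d+1))^d v_p$ (the slice of $H$ by $\{p\}\times \R^d$ is a halfspace of $\R^d$ containing $c_p$); summing over these fibers gives the desired bound $\frac{1}{2^n}(d/(d+1))^d \nu(S)$.

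The main obstacle, and the reason the statement remains conjectural, lies in securing the sharp $\frac{1}{2^n}$ fraction in the lifted step. The lifted set $\{(p, c_p)\}$ lives in $\Z^n \times \R^d$, not in $\Z^n$, so a halfspace $H$ whose normal mixes integer and continuous directions does not decompose cleanly along fibers, and bounding how many lifted centroids lie in $H$ is essentially the same kind of mixed-integer centerpoint question that the conjecture itself asks. Applying Theorem~\ref{thm:centerpoint} to the finite set $\{(p, c_p)\}$ only yields $\frac{1}{2^n(d+1)}$ — losing exactly the factor of $(d+1)$ the conjecture claims can be recovered — so one cannot bootstrap naively. My best guesses for an attack would be either (i) an induction on $n$ that peels off one integer coordinate at a time, applying the $n=0$ Gr\"unbaum bound at the base and a careful conditional averaging at each inductive step; or (ii) a direct variational study of the extremal halfspaces for $\{0,1\}^n\times\Delta_d$ to identify a sharp duality characterization, perhaps via a mixed-integer analogue of Tukey halfspace depth. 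Any candidate argument should be sanity-checked on $\{0,1\}^n \times \Delta_d$, where equality must be attained.
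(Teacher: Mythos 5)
You are attempting to prove Conjecture~\ref{conj:mixed-center}, which the paper does \emph{not} prove: it is stated explicitly as an open problem, first posed in Oertel's thesis~\cite{oertel2014integer}, with only partial results known~\cite{basu2017centerpoints}. So there is no proof in the paper to compare against, and you were right not to claim one. That said, your sketch does correctly identify the structure suggested by the extremal example $\{0,1\}^n \times \Delta_d$, and the per-fiber step is sound: each fiber $S_p = C \cap (\{p\}\times \R^d)$ is convex, so if a halfspace $H$ contains its centroid $c_p$, Gr\"unbaum's inequality gives $\mu_d(S_p \cap H) \geq \left(\frac{d}{d+1}\right)^d \mu_d(S_p)$.

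The gap, however, is more serious than your write-up suggests: your ``aspirational property'' is not merely unproven, it is \emph{false} in the generality in which you invoke it. You reduce the conjecture to the claim that the weighted point set $\{(p,c_p)\}$ with weights $v_p$ admits a point through which every halfspace captures a $2^{-n}$ fraction of the total weight. Take $n=d=1$ and the three unit-weight points $(0,0)$, $(1,100)$, $(2,0)$: each is a vertex of their convex hull, so each can be cut off by a halfspace containing only itself, i.e., a $\frac{1}{3} < \frac{1}{2}$ fraction of the weight, and the claimed lifted centerpoint does not exist. What prevents this from contradicting the conjecture itself is exactly the information your reduction discards: for fibers of a convex body the volumes and centroids are strongly coupled (a fiber over $x=1$ whose centroid sits at height $100$, wedged between fibers with centroids at height $0$, must by convexity be far longer than its neighbors, so equal weights are impossible). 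Any workable version of your strategy must therefore carry the convexity of $C$ into the lifted step rather than treating $\{(p,c_p)\}$ with weights $v_p$ as an arbitrary configuration; identifying the right ``convexity-aware'' lifted statement is, in essence, the open problem, which is where your fallback suggestions (induction on $n$, or a variational analysis of the extremal body) would have to supply genuinely new ideas.
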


The final piece we need is the following consequence of a ``strict feasibility" type assumption.

\begin{lemma}\label{lem:eps-sol-vol} Let $1 \leq p \leq \infty$. Let $C\subseteq \R^k$ be a closed, convex set such that $\{z\in \R^k: \|z - a \|_p \leq \rho \} \subseteq C \subseteq \{z\in \R^k: \|z \|_p \leq R\}$, for some $R, \rho \in \R_+$ and $a \in \R^k$. Let $f:\R^k \to \R$ be a convex function that is Lipschitz continuous over $\{z\in \R^k: \|z \|_p \leq R\}$ with respect to the $\|\cdot\|_p$-norm with Lipschitz constant $M$. For any $\epsilon \leq 2MR$ and for any $z^\star \in C$, the set $\{z \in C: f(z) \leq f(z^\star) + \epsilon\}$ contains an $\|\cdot\|_p$ ball of radius $\frac{\epsilon\rho}{2MR}$ with center lying on the line segment between $z^\star$ and $a$.
\end{lemma}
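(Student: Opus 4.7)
The strategy is to take the center of the claimed ball to be a suitable convex combination of $z^\star$ and $a$, and then combine convexity of $C$ and $f$ with Lipschitzness. The one subtlety is extracting the tight denominator $2MR$ rather than the cruder $M(2R+\rho)$ or $3MR$ that a naive calculation produces.

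First I would observe that the hypothesis $\{z : \|z - a\|_p \leq \rho\} \subseteq \{z : \|z\|_p \leq R\}$ forces $\|a\|_p \leq R - \rho$, obtained by pushing $a$ to the boundary of the inner ball along its own direction (and trivially if $a = 0$). By the triangle inequality this gives $\|a - z^\star\|_p \leq (R-\rho) + R = 2R - \rho$, which is what will make the denominator come out right at the end.

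Next, set $t := \frac{\epsilon}{2MR}$, which lies in $(0, 1]$ by the hypothesis $\epsilon \leq 2MR$, and define $\bar z := (1-t)z^\star + ta$. I claim the $\|\cdot\|_p$ ball of radius $r := t\rho = \frac{\epsilon\rho}{2MR}$ centered at $\bar z$ does the job. For containment in $C$: any $z = \bar z + u$ with $\|u\|_p \leq t\rho$ can be rewritten as $z = (1-t)z^\star + t w$ where $w := a + u/t$ satisfies $\|w - a\|_p = \|u\|_p/t \leq \rho$, so $w \in C$ and hence $z \in C$ by convexity. For the sublevel-set bound, convexity of $f$ together with Lipschitzness on $\{z : \|z\|_p \leq R\}$ (noting that $a$ and $w$ both lie in that ball) gives
\[
f(z) \;\leq\; (1-t)f(z^\star) + tf(w) \;\leq\; (1-t)f(z^\star) + t\bigl(f(a) + M\|w - a\|_p\bigr).
\]
Combining $f(a) - f(z^\star) \leq M\|a - z^\star\|_p \leq M(2R - \rho)$ with $t\|w - a\|_p = \|u\|_p \leq r$ yields
\[
f(z) - f(z^\star) \;\leq\; tM(2R-\rho) + Mr \;=\; \frac{\epsilon(2R-\rho) + \epsilon\rho}{2R} \;=\; \epsilon,
\]
as required. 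The main obstacle---insofar as there is one---is precisely the improved estimate $\|a\|_p \leq R - \rho$ used above: the naive bound $\|a - z^\star\|_p \leq 2R$ only yields radius $\frac{\epsilon\rho}{M(2R+\rho)}$, which falls short of the stated $\frac{\epsilon\rho}{2MR}$ by a small but crucial constant.
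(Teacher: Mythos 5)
Your proof is correct, and the geometric construction is the same as the paper's: the ball of radius $t\rho$ with $t = \frac{\epsilon}{2MR}$, centered at $(1-t)z^\star + ta$, obtained by scaling $C$ about $z^\star$. Where you diverge is in verifying that this ball lies in the sublevel set, and the comparison is instructive. The paper applies the Lipschitz bound \emph{directly between $z$ and $z^\star$}: any point of the scaled body has the form $z = z^\star + t(c - z^\star)$ with $c \in C$, so $\|z - z^\star\|_p = t\|c - z^\star\|_p \leq t \cdot 2R = \frac{\epsilon}{M}$ by the naive diameter bound alone, whence $f(z) \leq f(z^\star) + \epsilon$. This needs neither convexity of $f$ nor any refinement of the bound $\|a - z^\star\|_p \leq 2R$, and it shows more: the \emph{entire} scaled copy $z^\star + t(C - z^\star)$, not just the ball, sits inside $\{z \in C : f(z) \leq f(z^\star) + \epsilon\}$. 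You instead interpolate $f(z) \leq (1-t)f(z^\star) + tf(w)$ by convexity of $f$ and then route the Lipschitz estimate through $a$, which picks up the extra term $Mt\|w-a\|_p \leq Mt\rho$ and therefore forces the compensating improvement $\|a - z^\star\|_p \leq 2R - \rho$. So the ``main obstacle'' you identify — the need for $\|a\|_p \leq R - \rho$ — is not intrinsic to the lemma but an artifact of estimating via $a$; bounding $f(z) - f(z^\star)$ by $M\|z - z^\star\|_p$ along the segment from $z^\star$ makes it disappear. Your refined containment observation is correct and mildly interesting in its own right, but the paper's route is shorter and establishes the statement for arbitrary Lipschitz $f$, convex or not.
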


\begin{proof} Since $C \subseteq \{z: \|z \|_p \leq R\}$, we must have $C \subseteq \{z: \|z - z^\star\|_p \leq 2R\}$. By convexity of $C$ and the fact that $\frac{\epsilon}{2MR} \leq 1$, $z^\star + \frac{\epsilon}{2MR}(C - z^\star) \subseteq C$. Hence, $$z^\star + \frac{\epsilon}{2MR}(C - z^\star) \subseteq \{z \in C: \|z - z^\star\|_p \leq \frac{\epsilon}{M}\} \subseteq \{z \in C: f(z) \leq f^\star + \epsilon\},$$ where the second containment follows from the Lipschitz property of $f$. Since $C$ contains an $\|\cdot\|_p$ ball of radius $\rho$ centered at $a$, the set $z^\star + \frac{\epsilon}{2MR}(C - z^\star)$ (i.e., the $\frac{\epsilon}{2MR}$ scaling of $C$ about $z^\star$) must contain a ball of radius $\frac{\epsilon\rho}{2MR}$ centered at a point on the line segment between $z^\star$ and $a$.\end{proof}

We now proceed with the proof of the upper bounds in Theorem~\ref{thm:information-comp}.

\paragraph{The mixed-integer case with $n,d\geq 1$.} We consider the following adaptive search strategy. For any finite subset $T \subseteq \cQ \times H$ (possibly empty), where $\cQ$ is the set of all possible first-order or separation oracle queries in $[-R,R]^{n+d}$ and $H$ is the set of possible responses to such queries, define $D(T)$ as follows. Let $z_1, \ldots, z_q$ be the points queried in $T$ where either a first-order oracle call to a function was made, or a separation oracle call was made that returned a separating hyperplane (i.e., the point is not in the set queried). Let $h_j$ be the subgradient or normal vector to the separating hyperplane returned at $z_j$, $j=1, \ldots, q$. Define $v_{\min}$ to be the minimum function value seen so far ($+\infty$ if no first order query exists in $T$). 

The next query for $D$ will be at the centerpoint $\hat z$ of the set $$\left\{z \in \Z^n \times \R^d : 
\begin{array}{ll}\langle h_i, z - z_i \rangle \leq 0 & i = 1, \ldots, q, \\ 
\|x\|_\infty \leq R &\end{array}\right\}$$

If $T$ is the transcript on an instance $f, C$, then the ``search space" polyhedron $P$ defined by the above inequalities contains $C \cap \{z: f(z) \leq v_{\min}\}$. $D$ now queries the centerpoint of $P \cap (\Z^n \times \R^d)$ so that any separating hyperplane or subgradient inequality can remove a guaranteed fraction of the mixed-integer volume of the current search space. More formally, $D$ first queries the separation oracle for $C$ at $\hat z$. If the separation oracle says $\hat z$ is in $C$, then $D$ queries the first-order oracle for $f$ at $\hat z$.

Consider any instance $I = (f, C) \in \I_{n, d, R,M, \rho}$ and any natural number $$k \geq 2\cdot\left(\log_b\left(\left(\frac{2R+1}{\rho}\right)^{n+d}\right) + \log_b\left(\left(\frac{M(2R+1)}{\epsilon}\right)^{n+d}\right)\right),$$ where $b = \frac{2^n(d+1)}{2^n(d+1)-1}$. We claim that at least one first-order oracle query appears in the transcript $\Pi_k(D,I)$ and $z_{\min}\in S(I',\epsilon)$ for every instance $I'$ such that $\Pi_k(D,I') = \Pi_k(D, I)$, where $z_{\min}$ is a point queried in the transcript $\Pi_k(D, I)$ with the minimum function value amongst all points queried with a first-order query on $f$ in $\Pi_k(D, I)$. In other words, for any instance $I' = (f', C')$ such that $\Pi_k(D,I') = \Pi_k(D, I)$, we have $z_{\min} \in C'$ and $f'(z_{\min}) - OPT \leq \epsilon$ where $OPT$ is the minimum value of $f'$ on $C'$. This will prove the result since $$\begin{array}{rcl}2\cdot\left(\log_b\left(\left(\frac{2R + 1}{\rho}\right)^{n+d}\right) + \log_b\left(\left(\frac{M(2R+1)}{\epsilon}\right)^{n+d}\right)\right) & = & 2(n+d)\log_b\left(\frac{M(2R+1)^2}{\rho\epsilon}\right)\\ & = & 2(n+d)\ln\left(\frac{M(2R+1)^2}{\rho\epsilon}\right)/\ln(b) \\ & \leq &2(n+d)2^n(d+1)\ln\left(\frac{M(2R+1)^2}{\rho\epsilon}\right) \\ & \in & O\left((n+d)d2^n\log\left(\frac{MR}{\rho\epsilon}\right)\right)\end{array}$$

First, let $k'$ be the number of queries in $\Pi_k(D,I)$ that were either first-order oracle queries on $f$ or separation oracle queries on $C$ that returned a separating hyperplane, i.e., we ignore the separation oracle queries on points inside $C$. Observe that $k' \geq k/2 \geq\log_b\left(\left(\frac{2R+1}{\rho}\right)^{n+d}\right) + \log_b\left(\left(\frac{M(2R+1)}{\epsilon}\right)^{n+d}\right)$ since a query on any point in $C$ is immediately followed by a first order query on the same point. Theorem~\ref{thm:centerpoint} implies that each of these $k'$ queries reduces the mixed-integer volume of the current search space by at least $1/b$. Recall that we start with a mixed-integer volume of at most $(2R+1)^{n+d}$ and $C$ contains a fiber box of mixed-integer volume at least $\rho^d \geq \rho^{n+d}$ (since $\rho \leq 1$). Thus, at most $\log_b\left(\left(\frac{2R+1}{\rho}\right)^{n+d}\right)$ queries can be separation oracle queries and we have at least $\log_b\left(\left(\frac{M(2R+1)}{\epsilon}\right)^{n+d}\right)$ first-order queries to $f$ at points inside $C\cap (\Z^n\times \R^d)$. Let $k''$ denote the number of such queries, queried at $z_1, \ldots, z_{k''}$ with responses $h_1, \ldots, h_{k''}$ as the subgradients and $v_1, \ldots, v_{k''}$ as the function values. Let $v_{\min}$ be the minimum of these function values, corresponding to the query point $z_{\min}$. Since $z_{\min}$ is feasible to $C$, if $f', C'$ is any other instance with the same responses to all queries in $\Pi_k(D,I)$, then $z_{\min}$ is feasible to $C'$ as well. In fact, all the points $z_1, \ldots, z_{k''}$ are in $C'$. We now verify that $f'(z_{\min}) \leq OPT + \epsilon$ where $OPT$ is the minimum value of $f'$ on $C'\cap (\Z^n\times\R^d)$ attained at, say $z^\star = (x^\star, y^\star)$.

Let $C'' = C' \cap (\{x^\star\}\times \R^d)$ be the intersection of $C'$ with the fiber containing $z^\star$. Consider the polyhedron $$\tilde P:=\{z: \langle h_j, z - z_j \rangle \leq 0 \;\; j = 1, \ldots, k''\}.$$ 
Since we have been reducing the mixed-integer volume at a rate of $1/b$, $C' \cap \tilde P$ has mixed-integer volume at most $(2R+1)^{n+d}/b^{k'}$ and therefore $C''\cap \tilde P$ has $d$-dimensional volume at most $(2R+1)^{n+d}/b^{k'}$. Since $k' \geq \log_b\left(\left(\frac{2R+1}{\rho}\right)^{n+d}\right) + \log_b\left(\left(\frac{M(2R+1)}{\epsilon}\right)^{n+d}\right)$, we must have $b^{k'} \geq \left(\frac{2R+1}{\rho}\right)^{n+d}\cdot\left(\frac{M(2R+1)}{\epsilon}\right)^{n+d}$. Thus, $C''\cap \tilde P$ has $d$-dimensional volume at most $\left(\frac{\rho\epsilon}{M(2R+1)}\right)^{n+d}$. We may assume $\frac{\epsilon}{2MR} \leq 1$, otherwise any feasible solution is an $\epsilon$ approximate solution, and so is $z_{\min}$. Since $\rho \leq 1$ as well, this means $\frac{\rho\epsilon}{M(2R+1)}\leq 1$. Therefore, $\left(\frac{\rho\epsilon}{M(2R+1)}\right)^{n+d} \leq \left(\frac{\rho\epsilon}{M(2R+1)}\right)^{d} < \left(\frac{\rho\epsilon}{2MR}\right)^{d}$. From Lemma~\ref{lem:eps-sol-vol}, $\{z \in C'' : f'(z) \leq f'(z^\star) + \epsilon\}$ has volume at least $\left(\frac{\rho\epsilon}{2MR}\right)^{d}$. Thus, at least one point $\hat z$ in $\{z \in C'' : f'(z) \leq OPT + \epsilon\}$ must be outside $C'' \cap \tilde P$. Such a point must violate one of the subgradient inequalities defining $\tilde P$, say corresponding to index $\tilde j$. In other words,  $\langle h_{\tilde j}, \hat z - z_{\tilde j}\rangle > 0$. This means $f'(\hat z) \geq f'(v_{\tilde j}) + \langle h_{\tilde j}, \hat z - z_{\tilde j}\rangle > f'(v_{\tilde j})$. Thus, $f'(z_{\min}) \leq f'(v_{\tilde j}) < f'(\hat z) \leq OPT + \epsilon$. 

\paragraph{The pure integer case with $d=0$.} The proof proceeds in a very similar manner except that one can stop when we have at most one integer point left in the polyhedral search space. Thus, we start from the box $[-R, R]^n$ containing $(2R+1)^n$ integer points and end with at most a single integer point, removing at least $\frac{1}{2^n}$ fraction of integer points every time by Theorem~\ref{thm:centerpoint}.

\paragraph{The pure continuous case with $n=0$.} The proof is very similar and the only difference is that we can use the stronger bound on the centerpoints due to Gr\"unbaum from Theorem~\ref{thm:centerpoint}. In other words, $b$ can be taken to be the Euler's constant while mimicking the proof of the $n, d\geq 1$ case above.
\qed

\begin{remark} The upper and lower bounds achieved above are roughly a consequence of the concept of Helly numbers~\cite{hoffman1979binding,Doignon1973,bell1977theorem,scarf1977observation,helly1923mengen,AverkovWeismantel12}. For any subset $S\subseteq \R^k$, we say $K_1, \ldots, K_t$ is a {\em critical family of convex sets} with respect to $S$ (of size $t$) if $K_1 \cap \ldots \cap K_t \cap S = \emptyset$, but for any $i \in \{1, \ldots, t\}$, $\cap_{j\neq i}K_j \cap S \neq \emptyset$. The {\em Helly number of $S$} is the size of the largest critical family with respect to $S$ (possibly $+\infty$). It turns out that the Helly number of $\Z^n\times \R^d \subseteq \R^{n+d}$ is $2^n(d+1)$ and there exists a critical family of halfspaces $H_1, \ldots, H_{2^n(d+1)}$ of this size~\cite{hoffman1979binding,AverkovWeismantel12}. Now consider the family of $2^n(d+1)$ polyhedra $\cap_{j\neq i}H_j$ for $i=1, \ldots, 2^n(d+1)$, along with the polyhedron $\cap _{j=1}^{2^n(d+1)}H_j$. If one makes less than $2^n(d+1)$ separation oracle queries, then every time we can simply report the halfspace $H_j$ that does not contain a mixed-integer query point (such a halfspace exists since $\cap _{j=1}^{2^n(d+1)}H_j \cap (\Z^n \times \R^d)= \emptyset$), and if the query point is not in $\Z^n\times \R^d$, we truthfully report if it is in $\cap _{j=1}^{2^n(d+1)}H_j$ or not. The intersection of these reported halfspaces still contains a point from $\Z^n \times \R^d$ since it is a critical family and we have less than $2^n(d+1)$  queries. Therefore, we are unable to distinguish between the case $\cap _{j=1}^{2^n(d+1)}H_j$ which has no point from $\Z^n\times \R^d$ and the nonempty case. This gives a lower bound of $2^n(d+1)$. As we saw in the proof of the upper bound above, the key result is Theorem~\ref{thm:centerpoint} which is based on Helly numbers again~\cite{Gruenbaum1960,oertel2014integer,basu2017centerpoints}.
\end{remark}

\section{Algorithmic complexity}\label{sec:alg-comp}
   
The upper bounds on $\epsilon$-information complexity presented in Section~\ref{sec:information-comp} do not immediately give upper bounds on algorithmic complexity, unless we can provide an algorithm for computing centerpoints. This is computationally extremely challenging~\cite{basu2017centerpoints,oertel2014integer}. The best known algorithms for mixed-integer convex optimization do not match the $\epsilon$-information complexity bounds presented, even in terms of the informational bound $\icomp_{\cA}$ (see Definition~\ref{def:comp}). We will present an $\epsilon$-approximation algorithm for mixed-integer convex optimization in this section whose information complexity bound is the closest known to the algorithm independent $\epsilon$-information complexity bound from the previous section. In the case of pure continuous optimization, i.e., $n=0$, the algorithm's information complexity is larger than the corresponding $\epsilon$-information complexity bound in Theorem~\ref{thm:information-comp} by a factor that is linear\footnote{There is also a factor of $\log(d)$ which shows up due to technical reasons of using $\|\cdot \|_2$ instead of $\|\cdot \|_\infty$.} in the dimension $d$. In the case of pure integer optimization, i.e., $d=0$, the algorithm's information complexity bound is $2^{O(n\log n)}\log(R)$. Compared to the $\epsilon$-information complexity bound of $2^{O(n)}\log(R)$ from Theorem~\ref{thm:information-comp}, there seems to be a significant gap. It remains a major open question in integer optimization whether the gap between $2^{O(n)}$ and $2^{O(n\log n)}$ can be closed or not by designing a better algorithm.

The overall complexity (including the computational complexity) of the algorithm (see Definition~\ref{def:comp}) will be seen to be a low degree polynomial factor larger than its information complexity.

\subsection{Enumeration and cutting planes}

Algorithms for mixed-integer convex optimization are based on two main ideas. The first one, called {\em branching}, is a way to systematically explore different parts of the feasible region. The second aspect, that of cutting planes, is useful when one is working with a relaxation (superset) of the feasible region and uses separating hyperplanes to remove parts of the relaxation that do not contain feasible points. 

\begin{definition}\label{def:branching-and-cuts} A {\em disjunction} for $\Z^n\times \R^d$ is a union of polyhedra $D= Q_1\cup \ldots \cup Q_k$ such that $\Z^n \times \R^d \subseteq D$.

For any set $X$ in some Euclidean space, a {\em cutting plane} for $X$ is a halfspace $H$ such that $X \subseteq H$. If $X$ is of the form $C\cap (\Z^n \times \R^d)$, then the cutting plane is {\em trivial} if $C \subseteq H$, while it is said to be {\em nontrivial} otherwise.
\end{definition}

The words ``trivial" and ``nontrivial" are used here in a purely technical sense. For a complicated convex set $C$, we may have a simple polyhedral relaxation $R \supseteq C$ such as those used in the proofs of upper bounds in Theorem~\ref{thm:information-comp}, and the separation oracle for $C$ can return {\em trivial} cutting planes that shave off parts of $R$. But if the oracle is difficult to implement, there may be nothing trivial about obtaining such a cutting plane. Our terminology comes from settings where $C$ has a simple description and separating from $C$ is not a big deal; rather, the interesting work is in removing parts of $C$ that do not contain any point from $X = C\cap (\Z^n \times \R^d)$. We hope the reader will indulge us in our purely technical use of the terms {\em trivial} and {\em nontrivial} cutting planes.

\begin{example}\label{ex:Disj-CP-ex}
\begin{enumerate}
\item A well-known example of disjunctions for $\Z^n \times \R^d$ is the family of {\em split disjunctions} that are of the form $\{x\in \R^{n+d} : \langle \pi, x \rangle \leq \pi_0\} \cup \{x\in \R^{n+d} : \langle \pi, x \rangle \geq \pi_0+1\}$, where $\pi \in \Z^n \times \{0\}^d$ and $\pi_0\in \Z$. When the first $n$ coordinates of $\pi$ correspond to a standard unit vector, we get {\em variable disjunctions}, i.e., disjunctions of the form $\{x: x_i \leq \pi_0\} \cup \{x : x_i \geq \pi_0+1\}$, for $i=1, \ldots, n$. Several researchers in this area have also considered the intersection of $t$ different split disjunctions to get a disjunction~\cite{li2008cook,dash2013t,dash2014lattice}; these are known as $t$-branch split disjunctions.
\item As mentioned above, for any convex set $C$ contained in a polyhedron $P$, the separation oracle for $C$ can return trivial cutting planes if a point from $P\setminus C$ is queried. Examples of nontrivial cutting planes for sets of the form $C \cap (\Z^n \times \R^d)$ include {\em Chv\'atal-Gomory cutting planes}~\cite[Chapter 23]{sch} and {\em split cutting planes}~\cite{cook-kannan-schrijver}. These will be discussed in more detail below.
\end{enumerate}
\end{example}

\subsection{The ``Lenstra-style" algorithm}\label{sec:lenstra} Cutting plane based algorithms were designed in continuous convex optimization quite early in the development of the subject~\cite{Nesterov-Book04}. In the 80s, these ideas were combined with techniques from algorithmic geometry of numbers and the idea of branching on split disjunctions to design algorithms for the mixed-integer case as well~\cite{Lenstra83,Kannan87,GroetschelLovaszSchrijver-Book88}. There has been a steady line of work since then with sustained improvements; see~\cite{Dadush12,Heinz05,KhachiyanPorkolab00,hildebrand2013new} for a representative sample. We will present here an algorithm based on these ideas whose complexity is close to the best known algorithmic complexity for the general mixed-integer case.

We first introduce some preliminary concepts and results.

\begin{definition} Given a positive definite matrix $A \in \R^{k \times k}$, the {\em norm defined by $A$} on $\R^k$ is $\|x\|_A := \sqrt{x^TA^{-1}x}$. The unit ball of this norm $E_A:= \{x: x^TA^{-1}x \leq 1\}$ is called an {\em ellipsoid defined by $A$}. The orthonormal eigenvectors of $A$ are called the {\em principal axes.}
\end{definition}

The following result, due to Yudin and Nemirovski~\cite{yudin1976informational}, is a foundational building block for the algorithm.

\begin{theorem}\label{thm:ellipsoid}~\cite[Lemma 3.3.21]{GroetschelLovaszSchrijver-Book88} Let $A \in \R^{k\times k}$ be a positive definite matrix. For any halfspace $H$ and any $0 \leq \beta < \frac{1}{k}$ such that $H$ does not contain $\beta E_A$, there exists another positive definite matrix $A'$ and $c \in \R^k$ such that $E_A \cap H \subseteq c + E_{A'}$ and $$\vol(E_{A'}) \leq e^{-\frac{(1-\beta k)^2}{5k}}\vol(E_A),$$ where $\vol(\cdot)$ denotes the $k$-dimensional volume. Moreover, $c$ and $A'$ can be computed from $A$ by an algorithm with complexity $O(k^2)$.
\end{theorem}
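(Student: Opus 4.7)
The plan is to (i) reduce to the case where $E_A$ is the Euclidean unit ball by an affine change of coordinates, (ii) write down an explicit enclosing ellipsoid of the resulting spherical cap, (iii) bound its volume by $e^{-(1-\beta k)^2/(5k)}$ via a Taylor expansion, and (iv) transport everything back to the original coordinates. Setting $L = A^{1/2}$ (the symmetric positive definite square root), we have $L^{-1}E_A = B^k$, the Euclidean unit ball in $\R^k$; since a linear isomorphism preserves volume ratios, it suffices to construct the new ellipsoid in transformed coordinates. Composing with a suitable rotation, we may assume the transformed halfspace has the form $\{y \in \R^k : y_1 \leq \alpha\}$ for some $\alpha \in \R$. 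The hypothesis $\beta E_A \not\subseteq H$ becomes the statement that some $y \in \beta B^k$ has $y_1 > \alpha$, which forces $\alpha < \beta < 1/k$.

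For step (ii), the rotational symmetry of the cap $C_\alpha := B^k \cap \{y_1 \leq \alpha\}$ about the $e_1$-axis implies that the minimum-volume enclosing ellipsoid inherits this symmetry: it has center $c\,e_1$, semi-axis $r_1$ along $e_1$, and $k-1$ equal perpendicular semi-axes of length $r_2$. Requiring this ellipsoid to pass through the pole $(-1,0,\ldots,0)$ and through the equator $\{y_1=\alpha\}\cap\partial B^k$ gives $r_1 = c+1$ together with a formula for $r_2^2$ in terms of $c$; minimising $r_1 r_2^{k-1}$ over the remaining parameter $c$ yields
$$c = \frac{k\alpha-1}{k+1},\qquad r_1 = \frac{k(1+\alpha)}{k+1},\qquad r_2^2 = \frac{k^2(1-\alpha^2)}{k^2-1}.$$
A one-variable convexity argument confirms this ellipsoid contains all of $C_\alpha$: the function $(y_1-c)^2/r_1^2 + (1-y_1^2)/r_2^2$ is convex in $y_1$ exactly when $r_2>r_1$, i.e.\ $\alpha<1/k$, and it equals $1$ at both endpoints $y_1=-1$ and $y_1=\alpha$, so it does not exceed $1$ on $[-1,\alpha]$.

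For step (iii), the volume ratio $\rho(\alpha) := r_1 r_2^{k-1}$ satisfies $\tfrac{d}{d\alpha}\log\rho = (1-k\alpha)/(1-\alpha^2) > 0$ on $(-1,1/k)$, so the worst case is $\alpha=\beta$. Writing $t := 1-k\beta \in (0,1]$, a short substitution rewrites the logarithm as $\log\rho(\beta) = \tfrac{k+1}{2}\log(1-\tfrac{t}{k+1}) + \tfrac{k-1}{2}\log(1+\tfrac{t}{k-1})$. Applying $\log(1-y) \leq -y - y^2/2$ to the first term and the truncated alternating-series bound $\log(1+y) \leq y - y^2/2 + y^3/3$ (valid for $y \in (0,1]$) to the second collapses the sum to $\log\rho(\beta) \leq -\tfrac{t^2 k}{2(k^2-1)} + \tfrac{t^3}{6(k-1)^2}$. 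Using $t \leq 1$, the target inequality $\log\rho(\beta) \leq -t^2/(5k)$ reduces to the elementary estimate $\tfrac{k}{2(k^2-1)} - \tfrac{1}{6(k-1)^2} \geq \tfrac{1}{5k}$, which holds for all $k \geq 2$ by cross-multiplying (the $k=1$ case is an interval and can be checked directly). I expect this Taylor bookkeeping, choosing just enough higher-order terms on each side to land the precise constant $5$ in the exponent, to be the main technical obstacle of the proof.

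Step (iv) is routine: in the canonical basis the new ellipsoid is $c\,e_1 + E_{\tilde A}$ with $\tilde A = r_2^2 I + (r_1^2 - r_2^2)e_1 e_1^\top$, a rank-one update of a scalar multiple of the identity. Conjugating by the rotation $Q$ and by $L$ expresses $A'$ as $r_2^2 A$ plus a rank-one update along the direction $Aa/\sqrt{a^\top A a}$, where $a$ is the normal of the original halfspace $H$; the center is obtained by applying the same linear maps to $c\,e_1$. All operations reduce to matrix--vector products, inner products, and rank-one outer products in $\R^k$, fitting into $O(k^2)$ arithmetic.
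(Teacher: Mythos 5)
Your proposal is correct. Note that the paper does not prove this statement at all---it is quoted verbatim from Gr\"otschel--Lov\'asz--Schrijver (Lemma 3.3.21) and used as a black box---so there is no in-paper proof to compare against; what you have written is a faithful and correct reconstruction of the standard shallow-cut argument from that source: reduction to the unit ball, the explicit cap-enclosing ellipsoid with center $\frac{k\alpha-1}{k+1}e_1$ and semi-axes $\frac{k(1+\alpha)}{k+1}$, $\frac{k\sqrt{1-\alpha^2}}{\sqrt{k^2-1}}$, monotonicity of the volume ratio in $\alpha$, and the Taylor estimates landing the constant $5$ (your final polynomial inequality $9k^3-14k^2+k-6\geq 0$ does hold for all $k\geq 2$, and your separate treatment of $k=1$ is needed since the formulas degenerate there). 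The only detail you gloss over is the degenerate case where $H\cap E_A=\emptyset$ (i.e.\ $\alpha<-1$ after normalization), where the cap formulas break down but containment is vacuous and one can output the ellipsoid corresponding to any admissible $\alpha'\in(-1,\beta]$; this is trivial and does not affect correctness.
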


We will also need the following fundamental result in geometry of numbers.

\begin{theorem}\label{thm:flatness}[Khinchine's flatness theorem for ellipsoids]\cite{BanaszczykLitvakPajorSzarek99,banaszczyk1996inequalities,rudelson2000distances} Let $E \subseteq \R^k$ be an ellipsoid and $c \in \R^k$ such that $(c +E) \cap \Z^k = \emptyset$. Then there exists $w \in \Z^k \setminus \{0\}$ such that $$\max_{v \in E} \;\langle w, v\rangle - \min_{v \in E}\;\langle w, v\rangle \leq k.$$
\end{theorem}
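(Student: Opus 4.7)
The plan is to deduce the flatness theorem from Banaszczyk's transference inequality for ellipsoids, which relates the covering radius of a lattice with respect to a centrally symmetric ellipsoid $E$ to the length of the shortest nonzero lattice vector with respect to the polar ellipsoid $E^*$. By the paper's conventions, $E = E_A$ is centrally symmetric about the origin. For a lattice $L \subseteq \R^k$ and a centrally symmetric convex body $K$, define
$$\mu(L, K) := \inf\{t > 0 : L + tK = \R^k\}, \qquad \lambda_1(L, K) := \min_{z \in L \setminus \{0\}} \|z\|_K,$$
where $\|\cdot\|_K$ is the norm whose unit ball is $K$. The hypothesis $(c + E) \cap \Z^k = \emptyset$ says $c \notin \Z^k + E$, so $\Z^k + E \neq \R^k$, which forces $\mu(\Z^k, E) > 1$.

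The main tool is Banaszczyk's transference theorem for ellipsoids~\cite{banaszczyk1996inequalities,BanaszczykLitvakPajorSzarek99}: for any lattice $L$ and any centrally symmetric ellipsoid $E$,
$$\mu(L, E) \cdot \lambda_1(L^*, E^*) \leq \tfrac{k}{2},$$
where $L^* := \{w \in \R^k : \langle w, z\rangle \in \Z \text{ for all } z \in L\}$ is the dual lattice and $E^*$ is the polar ellipsoid. Applying this with $L = L^* = \Z^k$ (the standard lattice is self-dual) and using $\mu(\Z^k, E) > 1$ yields a nonzero $w \in \Z^k$ with $\|w\|_{E^*} \leq k/2$. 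Since the support function of a centrally symmetric body satisfies $h_E(w) = \sup_{v\in E} \langle w,v\rangle = \|w\|_{E^*}$ and, by symmetry, $\inf_{v \in E} \langle w, v\rangle = -h_E(w)$, we conclude
$$\max_{v \in E} \langle w, v\rangle - \min_{v \in E} \langle w, v\rangle = 2 h_E(w) = 2 \|w\|_{E^*} \leq k,$$
which is the required width bound.

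The hard part is of course the transference inequality itself; the deduction above is short and formal, but the inequality is deep. Banaszczyk's proof uses sharp estimates for the Gaussian measure of translates of a lattice (the so-called theta-series inequalities), combined with the Poisson summation formula; the fact that a Gaussian density is its own Fourier transform and interacts cleanly with ellipsoidal geometry is exactly what allows the bound to scale linearly in $k$ with a small absolute constant. Rudelson's refinement~\cite{rudelson2000distances} provides an alternative derivation via John's decomposition of an ellipsoid. My plan is to invoke the transference inequality as a black box from these references and present only the brief reduction sketched above, since reproving transference would take us far afield from the optimization-theoretic focus of the paper.
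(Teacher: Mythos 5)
You should know at the outset that the paper itself does not prove this theorem: it is imported as a black box from the cited references, so there is no internal proof to compare against. Your reduction is correct, and each step checks out. The hypothesis $(c+E)\cap\Z^k=\emptyset$ is indeed equivalent to $c\notin\Z^k+E$ (this uses $E=-E$, which holds because the paper's ellipsoids are centered at the origin), and since the distance function $x\mapsto\min_{z\in\Z^k}\|x-z\|_E$ is continuous and periodic, coverage is attained at $t=\mu(\Z^k,E)$, so failure of coverage at $t=1$ forces $\mu(\Z^k,E)>1$ strictly. The transference inequality with $L=L^*=\Z^k$ then yields $w\in\Z^k\setminus\{0\}$ with $\|w\|_{E^*}<k/2$, and the support-function identity $\max_{v\in E}\langle w,v\rangle=\|w\|_{E^*}$ together with central symmetry gives lattice width $2\|w\|_{E^*}\le k$, as required. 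The one point you should make explicit is where the constant $k/2$ for ellipsoids comes from. Banaszczyk's paper on polar reciprocal lattices proves transference for \emph{general} symmetric convex bodies only with a constant of order $k\log k$; the clean constant $k/2$ is his Euclidean-ball transference theorem (the earlier Mathematische Annalen paper, which is not among the three references cited in the statement), and it extends to ellipsoids by linear invariance: writing $E=TB$ with $B$ the Euclidean unit ball, one has $\mu(\Z^k,E)=\mu(T^{-1}\Z^k,B)$, the dual lattice of $T^{-1}\Z^k$ is $T^{T}\Z^k$, the polar ellipsoid is $E^*=T^{-T}B$, and $\lambda_1(T^{T}\Z^k,B)=\lambda_1(\Z^k,E^*)$, so the ellipsoid case is exactly the Euclidean case in disguise. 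With that one-line reduction supplied and the correct reference added, your argument is a complete and correct derivation, modulo the deep cited inequality --- which is the same level of reliance on the literature as the paper itself, but organized so that the reader sees precisely which geometric fact does the work.
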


The final piece we will need is the following algorithmic breakthrough achieved at the beginning of the previous decade~\cite{MicciancioVoulgaris2010,micciancio2013deterministic,aggarwal2015-SVP,aggarwal2015-CVP}. We state the result in a way that will be most convenient for us.

\begin{theorem}\label{thm:SVP-CVP} Let $A \in \R^{k\times k}$ be a positive definite matrix. Then there exist algorithms with worst case complexity $2^{O(k)}\poly(\size(A))$ that solve the following optimization problems: $$\min_{v \in \Z^k\setminus\{0\}} \|v\|_A\qquad \textrm{(Shortest Vector Problem (SVP))}$$ and for any given vector $c \in \R^k$, $$\min_{v \in \Z^k} \|v-c\|_A\qquad \textrm{(Closest Vector Problem (CVP))}$$
\end{theorem}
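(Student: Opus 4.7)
The plan is to reduce both problems to the Euclidean lattice setting and then invoke the Voronoi-cell based algorithm of Micciancio--Voulgaris. First, by a Cholesky factorization $A = LL^T$, the norm $\|v\|_A = \sqrt{v^T A^{-1} v}$ equals the Euclidean norm $\|L^{-1} v\|_2$. Thus, SVP on $(\Z^k, \|\cdot\|_A)$ is equivalent to computing the shortest nonzero vector in the lattice $\Lambda := L^{-1}\Z^k$ under $\|\cdot\|_2$, and CVP with target $c$ reduces to CVP on $\Lambda$ with target $L^{-1}c$. This reduction is polynomial in $\size(A)$, so it suffices to solve SVP/CVP for a Euclidean lattice given by a basis $B$.

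The central combinatorial object is the (open) Voronoi cell $V(\Lambda) = \{x : \|x\|_2 < \|x - v\|_2 \text{ for all } v \in \Lambda \setminus \{0\}\}$, a symmetric polytope whose facet normals --- the so-called \emph{Voronoi-relevant vectors} --- form a set of size at most $2(2^k - 1)$. Once the set of Voronoi-relevant vectors is known, the two problems become tractable:
\begin{itemize}
\item For SVP, it is a classical fact that a shortest nonzero vector of $\Lambda$ must appear among the Voronoi-relevant vectors, so one simply enumerates them and returns the one of smallest norm.
\item For CVP with target $c$, one applies an \emph{iterative slicing} procedure: starting from $t_0 = c$, as long as $t_i \notin V(\Lambda)$ one can find (by checking the at most $2(2^k-1)$ facets) a Voronoi-relevant vector $u$ with $\|t_i - u\|_2 < \|t_i\|_2$, and set $t_{i+1} = t_i - u$. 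Standard analysis shows that after $2^{O(k)}$ steps the iterate lies in $V(\Lambda)$, and the accumulated lattice shift is a closest vector.
\end{itemize}

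The hard part, and the main obstacle in the plan, is computing the set of Voronoi-relevant vectors in time $2^{O(k)}\poly(\size(A))$. The key idea is a bootstrapping recursion along a chain of sublattices. Using a preliminary basis reduction (e.g., LLL) on $B$, one constructs a filtration $\{0\} = \Lambda_0 \subset \Lambda_1 \subset \cdots \subset \Lambda_k = \Lambda$ with $[\Lambda_{i+1}:\Lambda_i] = 2$. Inductively, given the Voronoi cell of $\Lambda_i$, one can invoke the (already efficient, by the induction hypothesis) CVP-with-preprocessing subroutine on $\Lambda_i$ to enumerate all Voronoi-relevant vectors of $\Lambda_{i+1}$: each such vector corresponds to a closest vector in $\Lambda_i$ to one of a few explicit coset representatives of $\Lambda_{i+1}/\Lambda_i$. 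The cost of the inductive step is $2^{O(k)}$ calls to CVP-with-preprocessing on $\Lambda_i$, each running in $2^{O(k)}$ time, and doing this $k$ times telescopes to $2^{O(k)}\poly(\size(A))$ overall.

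The parts of the argument I expect to be most delicate are (a) verifying that the bit-sizes of the Voronoi-relevant vectors produced along the recursion remain polynomially bounded in $\size(A)$, so that each arithmetic operation is genuinely $\poly(\size(A))$, and (b) the combinatorial bound $|{\rm VR}(\Lambda)| \leq 2(2^k-1)$ together with the termination analysis of the iterative slicing procedure, which is what keeps the exponential factor to $2^{O(k)}$ rather than something larger. Once these are in hand, combining everything yields the stated $2^{O(k)}\poly(\size(A))$ bound for both problems.
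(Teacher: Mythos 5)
The first thing to note is that the paper does not prove Theorem~\ref{thm:SVP-CVP} at all: it is stated as a black box, an algorithmic breakthrough imported from the literature, with the proof residing in the cited works~\cite{MicciancioVoulgaris2010,micciancio2013deterministic,aggarwal2015-SVP,aggarwal2015-CVP}. So there is no in-paper argument to compare yours against; what you have written is a reconstruction of the Micciancio--Voulgaris deterministic algorithm, which is exactly one of the sources the paper cites and then uses as a subroutine in the Lenstra-style algorithm of Theorem~\ref{thm:feas-alg}. Your high-level ingredients are the right ones: the Cholesky/Gram reduction from $\|\cdot\|_A$ to a Euclidean lattice problem, the bound $2(2^k-1)$ on Voronoi-relevant vectors, the fact that a shortest nonzero vector is Voronoi-relevant, the iterative slicer for CVP, and a bootstrapped preprocessing that computes the relevant vectors via CVP-with-preprocessing on sublattices.

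That said, two steps of your sketch are garbled in ways that would need repair before this could count as a proof. First, the filtration $\{0\}=\Lambda_0\subset\Lambda_1\subset\cdots\subset\Lambda_k=\Lambda$ with $[\Lambda_{i+1}:\Lambda_i]=2$ cannot exist: $\{0\}$ has infinite index in $\Lambda$, and $k$ index-$2$ steps can only bridge an index-$2^k$ gap between two lattices of the same rank. The actual Micciancio--Voulgaris recursion is a rank-by-rank induction on $\mathcal{L}(b_1,\dots,b_i)$ interleaved with chains of full-rank index-$2$ sublattices of the form $\mathcal{L}(b_1,\dots,b_{i-1},2^jb_i)$; the length of those chains is polynomial in the bit size of the LLL-reduced basis, not $k$, which is precisely where the LLL preprocessing and the $\poly(\size(A))$ factor enter. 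Second, your termination claim for the slicer is too quick: greedily subtracting relevant vectors from an arbitrary target does decrease the norm at every step, but the number of points of the target coset inside the ball of radius $\|c\|$ need not be $2^{O(k)}$ (it can grow like $(\|c\|/\lambda_1)^k$). The $2^{O(k)}$ iteration bound holds, by a packing/volume argument, only for targets already lying in $2\bar V(\Lambda)$; general targets must first be brought there by descending through the scaled lattices $2^j\Lambda$, with polynomially many levels. Both fixes are in the cited papers; in addition, since the Cholesky factor is generally irrational, in the Turing model one should either work with the rational Gram matrix directly or control precision explicitly. With those repairs, your outline is faithful to the proof the paper delegates to the literature.
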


We are now ready to describe our algorithm. We begin with a feasibility algorithm before discussing optimization. Given a closed, convex set, the algorithm either correctly computes a mixed-integer point in the convex set, or reports that there is no mixed-integer point ``deep inside" the set. Thus, the algorithm is not an exact feasibility algorithm. Nevertheless, this will suffice to design an $\epsilon$-approximation algorithm for the problem class~\eqref{eq:MICO} parameterized by $n,d,M,R, \rho$, as studied in Section~\ref{sec:information-comp}. However, since we work with ellipsoids, the parameters $R$, $\rho$ and the Lipschitz constant $M$ will all use the $\|\cdot \|_2$ norm instead of the $\|\cdot \|_\infty$ norm as in Section~\ref{sec:information-comp}. Moreover, $M$ is defined with respect to the full space $\R^n\times \R^d$, as opposed to just $\R^d$.

\begin{theorem}\label{thm:feas-alg} Let $R \geq 0$. There exists an algorithm $\cA$, i.e., oracle Turing machine, such that for any closed, convex set $C \subseteq \{z \in \R^n\times \R^d: \|z\|_2 \leq R\}$ equipped with a separation oracle that $\cA$ can access, and any $\delta > 0$, either correctly computes a point in $C\cap (\Z^n \times \R^d)$, or correctly reports that there is no point $z \in C \cap (\Z^n \times \R^d)$ such that the Euclidean ball of radius $\delta$ around $z$ is contained in $C$. 

Moreover, $$\icomp\textstyle{_{\cA}}(n,d,R,\delta) \leq 2^{O(n\log(n + d))}\log\left(\frac{R}{\delta}\right)$$ and $$\comp\textstyle{_{\cA}}(n,d,R,\delta) \leq 2^{O(n\log(n+d))}\poly\left(\log\left(\frac{R(n+d)}{\delta}\right)\right)$$
\end{theorem}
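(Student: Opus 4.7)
The plan is a recursive Lenstra-style algorithm $\cA$ whose recursion reduces the integer dimension by one at every level, branching $\poly(n+d)$ ways. The base case $n=0$ is handled by the standard ellipsoid method applied directly via the separation oracle for $C$: iteratively calling Theorem~\ref{thm:ellipsoid} with $\beta=0$ either returns a point of $C$ or certifies, by volumetric contradiction, that $C$ has no Euclidean ball of radius $\delta$, using $O(d^2\log(R/\delta))$ oracle queries.

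For the inductive step $n\geq 1$ I would first run the shallow-cut variant of Theorem~\ref{thm:ellipsoid} on the separation oracle for $C$ to either certify absence of a Euclidean $\delta$-ball in $C$ (so no $\delta$-deep mixed-integer point exists) or produce a rounding: an ellipsoid $c+E_A$ containing $C$ and an inscribed concentric copy $c+\alpha E_A \subseteq C$ with $\alpha = 1/\poly(n+d)$. Let $\pi:\R^{n+d}\to\R^n$ be projection onto the integer coordinates and set $\tilde E := \pi(E_A)$, $\tilde c := \pi(c)$. Invoking the CVP algorithm of Theorem~\ref{thm:SVP-CVP} with the quadratic form of $\tilde E$, find $x^\star\in\Z^n$ minimizing $\|x-\tilde c\|_{\tilde E}$. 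If $x^\star\in\tilde c+\alpha\tilde E$, then the fiber $\{x^\star\}\times\R^d$ meets the inscribed ellipsoid $c+\alpha E_A\subseteq C$, and the slice $C\cap(\{x^\star\}\times\R^d)$ inherits a Euclidean ball of radius $\Omega(\delta/\poly(n+d))$, so a single base-case call on this slice returns a feasible mixed-integer point. Otherwise $\tilde c+\alpha\tilde E$ contains no integer point, so Theorem~\ref{thm:flatness} furnishes $w\in\Z^n\setminus\{0\}$ with $\alpha\tilde E$-width at most $n$, i.e.\ the width of $\tilde c+\tilde E$ along $w$ is at most $W=n/\alpha=\poly(n+d)$. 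I would enumerate the $\lfloor W\rfloor+1$ integer values $k$ with $(c+E_A)\cap\{z:\langle(w,\mathbf{0}),z\rangle=k\}\neq\emptyset$, apply a unimodular change of basis sending $w$ to the first standard basis vector, and recurse on each slice, which is a mixed-integer feasibility problem with $n-1$ integer variables and a separation oracle inherited from that of $C$.

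The recursion tree has depth $n$ and branching factor $\poly(n+d)$, hence $2^{O(n\log(n+d))}$ leaves; every node performs $\poly(n+d)\log(R/\delta)$ oracle queries (for the rounding and, at leaves, the final continuous feasibility check) plus a CVP solve of cost $2^{O(n)}\poly(\log(R(n+d)/\delta))$, so summing over the tree yields the claimed bounds on $\icomp_{\cA}$ and $\comp_{\cA}$. The main technical obstacles I expect are (i) propagating the depth parameter $\delta$ and the radius $R$ through the unimodular transformations and slicings so that the inductive invariant degrades by only a $\poly(n+d)$ factor per level rather than producing a multiplicative blow-up in $\log(R/\delta)$, and (ii) implementing the shallow-cut ellipsoid rounding from an exact separation oracle for $C$, which is a standard but delicate routine from~\cite{GroetschelLovaszSchrijver-Book88}; a further subtlety is that the flatness direction $w$ can have large norm, so ensuring the basis change produces a well-conditioned subproblem requires LLL-type reduction, which is absorbed into the CVP cost.
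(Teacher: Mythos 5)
Your proposal follows the same Lenstra-style blueprint as the paper---ellipsoid machinery from Theorem~\ref{thm:ellipsoid}, CVP/SVP from Theorem~\ref{thm:SVP-CVP}, flatness from Theorem~\ref{thm:flatness}, and recursion on $\poly(n+d)$ slices with $n-1$ integer variables---but organizes it differently: you compute a full ellipsoidal rounding of $C$ up front (outer ellipsoid $c+E_A$ with concentric inscribed copy $c+\alpha E_A\subseteq C$) and then solve a single CVP, whereas the paper never computes a rounding at all. In the paper, a CVP is solved at \emph{every} ellipsoid iteration; the inner ellipsoid $c_{i-1}+\frac{1}{n+d+1}E_{i-1}$ is not claimed to lie in $C$, which is exactly why the lifted CVP point is submitted to the separation oracle, and a shallow cut (Theorem~\ref{thm:ellipsoid} with $\beta=\frac{1}{n+d+1}$) shrinks the ellipsoid when the point is rejected. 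The paper's interleaved organization buys simplicity (no shallow separation oracle needs to be implemented); your rounding-first organization is closer to the classical Lenstra/GLS presentation and is legitimate, but it is where your one real error lives.

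The gap is in your Case A ($x^\star\in\tilde c+\alpha\tilde E$). You correctly conclude that the fiber $\{x^\star\}\times\R^d$ meets $c+\alpha E_A\subseteq C$, but the further claim that the slice $C\cap(\{x^\star\}\times\R^d)$ contains a Euclidean ball of radius $\Omega(\delta/\poly(n+d))$ is false in general: the CVP minimizer $x^\star$ can lie exactly on the boundary of the projected inner ellipsoid $\tilde c+\alpha\tilde E$, in which case the fiber's intersection with $c+\alpha E_A$ is a single point, and the slice of $C$ itself may have empty interior (e.g.\ if $C$ happens to equal $c+\alpha E_A$). Then your ``single base-case call on this slice'' can legitimately terminate reporting that the slice contains no small ball, and your algorithm is stuck: it knows $C\cap(\Z^n\times\R^d)\neq\emptyset$ but cannot output a witness, violating the correctness requirement of Theorem~\ref{thm:feas-alg}. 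The fix is immediate and is exactly the paper's Case 2a: lift $x^\star$ explicitly by computing $\hat y=\argmin_{y\in\R^d}\|(x^\star,y)-c\|_{A}$, an explicit convex quadratic minimization; by linearity of projection, $x^\star\in\pi(c+\alpha E_A)$ forces $(x^\star,\hat y)\in c+\alpha E_A\subseteq C$, so you output $(x^\star,\hat y)$ with no oracle call and no recursive call at all. A second, smaller gap: Theorem~\ref{thm:flatness} is purely existential, so ``furnishes $w$'' is not an algorithmic step; you must compute the flat direction, which the paper does by solving SVP with respect to the polar norm (the norm defined by $\tilde A=\frac14 A'^{-1}$, so that $\|w\|_{\tilde A}$ equals the width of the projected ellipsoid along $w$), again via Theorem~\ref{thm:SVP-CVP}. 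Your appeal to LLL is in the right spirit but weaker than what you already have available.
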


\begin{proof} The algorithm uses recursion on the ``integer dimension" $n$. 

Let $c_0=0$ and $E_0 = \{z: \|z\|_2 \leq R\}$. The algorithm will either iteratively compute $c_i \in \R^n \times \R^d$ and ellipsoid $E_i \subseteq \R^n \times \R^d$ from $c_{i-1}, E_{i-1}$ for $i=1, 2, \ldots$ such that the invariant $C\subseteq c_{i} + E_{i}$ is maintained, or the algorithm will recurse on lower dimensional problems.

If $\vol(E_{i-1})$ less than the volume of a Euclidean ball of radius $\delta$, then we report that there is no point $z \in C \cap (\Z^n \times \R^d)$ such that the Euclidean ball of radius $\delta$ around $z$ is contained in $C$. 

Otherwise, we either compute a ``test point" $(\hat x, \hat y) \in \Z^n \times \R^d$ and generate the new $c_i, E_i$ based on properties of this point (Cases 1 and 2a below), or recurse on lower dimensional subproblems (Case 2b below). 
\medskip

\noindent \underline{\bf Case 1: $n=0$.} Define $(\hat x, \hat y)$ to be $c_{i-1}$. We query the separation oracle of $C$ at $(\hat x, \hat y)$. If this point is in $C$, we are done. Else, we obtain a separating halfspace $H$. Applying Theorem~\ref{thm:ellipsoid} with $k=n+d$ and $\beta = 0$, we can construct $c_i$ and $E_i$ such that $(c_{i-1} +E_{i-1})\cap H \subseteq c_i + E_i$ and $\vol(E_i) \leq e^{-\frac{1}{5(n+d)}}\vol(E_{i-1}).$ Note that this ensures $C\subseteq c_{i} + E_{i}$ since inductively we know $C\subseteq (c_{i-1} + E_{i-1})\cap H$.
\medskip

\noindent \underline{\bf Case 2: $n \geq 1$.} We compute the projections $c', E'$ of $c_{i-1}, E_{i-1}$ onto the coordinates corresponding to the integers, i.e., $\R^n$. This is easy to do for $c_{i-1}$ (simply drop the other coordinates) and given the matrix $A_{i-1}$ defining $E_{i-1}$, the submatrix $A'$ of $A_{i-1}$ whose rows and columns correspond to the integer coordinates is such that $E_{A'}$ is the projection of $E_{i-1}$. We now solve the closest vector problem (CVP) for $c' \in \R^n$ and the norm given by $A' \in \R^{n\times n}$ using the algorithm in Theorem~\ref{thm:SVP-CVP} to obtain $\hat x \in \Z^n$. 
\medskip

\underline{\bf Case 2a: $\|\hat x - c'\|_{A'} \leq \frac{1}{n+d+1}$.} In other words, $\hat x \in c' + \frac{1}{n+d+1}E'$. Since $c', E'$ are projections of $c_{i-1}, E_{i-1}$ respectively, $c' + \frac{1}{n+d+1}E'$ is the projection of $c_{i-1} + \frac{1}{n+d+1}E_{i-1}$ by linearity of the projection map. Hence, there must be $\hat y \in \R^d$ such that $(\hat x, \hat y) \in c_{i-1} + \frac{1}{n+d+1}E_{i-1}$. Therefore, if we compute $\argmin_{y\in \R^d}\|(\hat x, y) - c_{i-1} \|_{A_{i-1}}$ which amounts to computing the minimizer of an explicit convex quadratic function in $\R^d$ (which can be done analytically or via methods like conjugate gradient), we can find a point $(\hat x, \hat y)$ in $c_{i-1} + \frac{1}{n+d+1}E_{i-1}$. 

We query the separation oracle of $C$ at $(\hat x, \hat y)$. If this point is in $C$, we are done. Else, we obtain a separating halfspace $H$. Since $(\hat x, \hat y)$ is in $c_{i-1} + \frac{1}{n+d+1}E_{i-1}$, this means $c_{i-1} + \frac{1}{n+d+1}E_{i-1}$ is not contained in $H$. Applying Theorem~\ref{thm:ellipsoid} with $k=n+d$ and $\beta = \frac{1}{n+d+1}$, we can construct $c_i$ and $E_i$ such that $(c_{i-1} +E_{i-1})\cap H \subseteq c_i + E_i$ and $\vol(E_i) \leq e^{-\frac{1}{5(n+d)(n+d+1)^2}}\vol(E_{i-1}).$ Note that this ensures $C\subseteq c_{i} + E_{i}$ since inductively we know $C\subseteq (c_{i-1} + E_{i-1})\cap H$.
\medskip

\underline{\bf Case 2b: $\|\hat x - c'\|_{A'} > \frac{1}{n+d+1}$.} In other words, $\hat x \not\in c' + \frac{1}{n+d+1}E'$ which implies that $c' + \frac{1}{n+d+1}E'$ has no integer points since $\hat x$ is the closest integer point to $c'$ in the norm $\| \cdot \|_{A'}$. Theorem~\ref{thm:flatness} implies that there exists $w \in \Z^n\setminus\{0\}$ such that $\max_{x \in E'} \;\langle w, x\rangle - \min_{x \in E'}\;\langle w, x\rangle \leq n(n+d+1).$ Rearranging, this says that $\max_{x,x' \in E'} \langle w, x - x'\rangle \leq n(n+d+1)$ and therefore $$\max_{p \in 2E'} \langle w, p\rangle = \max_{p \in E' + E'} \langle w, p\rangle = \max_{p \in E' - E'} \langle w, p\rangle \leq n(n+d+1),$$ where the equalities follow from the fact that $E'$ is convex and centrally symmetric about the origin. Standard results in convex analysis involving polarity imply that $\| w \|_{\tilde A} = \max_{p \in 2E'} \langle w, p\rangle$ where $\tilde A := \frac{1}{4}A'^{-1}$. We therefore compute the shortest vector $w^\star \in \Z^n\setminus \{0\}$ by the algorithm in Theorem~\ref{thm:SVP-CVP} with respect to the norm $\|\cdot\|_{\tilde A}$ and we are guaranteed that $$\max_{x \in E'} \;\langle w^\star, x\rangle - \min_{x \in E'}\;\langle w^\star, x\rangle \leq n(n+d+1).$$

All mixed-integer points must lie on the hyperplanes $\{(x,y) \in \R^n \times \R^d: \langle w^\star, x\rangle \in \Z\}$. Moreover, since $C \subseteq E$ and $E'$ is the projection of $E$, it suffices to search over the ``slices" of $C$ given by $C \cap \{(x,y) \in \R^n \times \R^d: \langle w^\star, x\rangle = m\}$ for $m = \lceil \langle w^\star, c'\rangle - n(n+d+1)\rceil, \lceil \langle w^\star, c'\rangle - n(n+d+1)\rceil + 1, \ldots, \lfloor \langle w^\star, c'\rangle + n(n+d+1) \rfloor$. By a change of coordinates in the integer constrained variables, these slices involve $n-1$ integer variables and we recurse on these subproblems. We also note that if there exists $z \in C \cap (\Z^n\times \R^d)$ with a ball of radius $\delta$ around $z$ contained in $C$, then the slice containing $z$ will also have the same property. Thus, if the algorithm fails on all the slices, then the algorithm will indeed report correctly that there is no such point in $C$.\footnote{A subtlety here is to make sure that one has access to a separation oracle for the lower dimensional subproblems. This is not hard to implement given access to a separation oracle for $C$: given a point in the new space, one maps back to $\R^n\times \R^d$ and queries the separation oracle there.}

\paragraph{Number of oracle calls and overall complexity.} Within any particular level of the recursion, the algorithm makes at most $5(n+d)(n+d+1)^2\ln\left(\left(\frac{R}{\delta}\right)^{n+d}\right)$ iterations of constructing new ellipsoids in Case 1 or Case 2a. This is because we start with a ball of radius $R$, stop after the volume of the ellipsoid gets smaller than the volume of a ball of radius $\delta$, and the volume is reduced by a factor of at least $e^{-\frac{1}{5(n+d)(n+d+1)^2}}$ every time. Thus, at most $5(n+d)(n+d+1)^2\ln\left(\left(\frac{R}{\delta}\right)^{n+d}\right)$ oracle calls are made within every level of the recursion. The recursion over $2n(n+d+1)$ subproblems leads to at most $(2n(n+d+1))^n = 2^{O(n\log(n+d))}$ subproblems. Putting everything together we obtain the bound stated in the theorem on the number of separation oracle calls. 

There are two computation intensive steps beyond the separation oracle calls: 1) Computing $c_i,E_i$ from $c_{i-1}, E_{i-1}$ based on Theorem~\ref{thm:ellipsoid}, and 2) Solving closest vector and shortest vector problems using the algorithm in Theorem~\ref{thm:SVP-CVP}. The first has complexity $O(n+d)^2$ and the second has complexity $2^{O(n)}$ times a polynomial factor of the sizes of the matrices involved. Both 1) and 2) above may result in irrational numbers if we perform exact computations. Since we wish to remain in the Turing machine model of computation, one has to make sure that we can round these to a polynomial number of bits and the sizes of the numbers do not grow exponentially. We omit these technical details from this presentation and refer the reader to~\cite{GroetschelLovaszSchrijver-Book88}. Once all of this is taken into account, we obtain the bound on the overall complexity of the algorithm stated in the theorem. Using these careful approximation techniques, the space complexity of the algorithm can in fact be bounded by a polynomial in the parameters of the problem (note that the computational (time) complexity is {\em not} polynomial)~\cite{GroetschelLovaszSchrijver-Book88,frank1987application}.\end{proof}

\begin{theorem}\label{thm:opt-alg} Consider the family of problems of the form~\eqref{eq:MICO} such that if $(x^\star, y^\star)\in \Z^n \times \R^d$ is the optimum solution, then there exists $\hat y \in \R^d$ and $\rho > 0$ such that $\{(x,y): \|(x,y) - (x^\star,\hat y)\|_2 \leq \rho\} \subseteq C$, i.e., there is a ``strictly feasible" point $(x^\star, \hat y)$ in the same fiber as the optimum $(x^\star, y^\star)$ with a Euclidean ball of radius $\rho$ in $\R^n \times \R^d$ around $(x^\star, \hat y)$ contained in $C$. Let $\I_{n, d, R,M, \rho}$ be defined as in Section~\ref{sec:information-comp} for this family, except that $\|\cdot \|_2$ is used instead of $\|\cdot\|_\infty$ in the definition of the parameters $R,M, \rho$, and $M$ is defined with respect to the full space $\R^n\times \R^d$, as opposed to just $\R^d$. 

Let the oracle access to an instance $f, C$ in $\I_{n, d, R,M, \rho}$ be through a separation oracle for $C$ and a first-order oracle for $f$. Then for every $\epsilon >0$, there exists an $\epsilon$-approximation algorithm $\cA$ for this problem class with $$\icomp\textstyle{_{\cA}}(n,d,R,M,\rho) \leq 2^{O(n\log(n + d))}\left(\log\left(\frac{MR}{\rho\epsilon}\right)\right)^2$$ and $$\comp\textstyle{_{\cA}}(n,d,R,M,\rho) \leq 2^{O(n\log(n+d))}\poly\left(\log\left(\frac{MR(n+d)}{\rho\epsilon}\right)\right)$$
\end{theorem}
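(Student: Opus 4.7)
The plan is to reduce the optimization problem to a sequence of feasibility problems via binary search on the objective value $t$, using Theorem~\ref{thm:feas-alg} as a subroutine on the sublevel sets $C_t := C \cap \{z : f(z) \le t\}$. Correctness will hinge on Lemma~\ref{lem:eps-sol-vol}, which converts the strict-feasibility parameter $\rho$ into a lower bound on the ``depth'' of $C_t$ once $t$ exceeds the optimum by a definite amount.

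First I would build a separation oracle for $C_t$ from the given oracles: on a query $z$, use the separation oracle for $C$ (forwarding its cut if $z \notin C$); otherwise use the first-order oracle to obtain $f(z)$ and a subgradient $g \in \partial f(z)$, and if $f(z) > t$ return the subgradient cut $\{z' : \langle g, z'-z\rangle \le t - f(z)\}$, which separates $z$ from $C_t$ by convexity. Each derived query costs one call to each given oracle. Next, I would obtain an initial feasible point $z_0$ by running Theorem~\ref{thm:feas-alg} on $C$ itself with $\delta = \rho$ (strict feasibility guarantees success) and set $U := f(z_0)$, $L := f(0) - MR$; the Lipschitz hypothesis on the ball of radius $R$ (together with $C$ being contained in this ball) then gives $L \le OPT \le U$ with $U - L \le 2MR$. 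The main loop repeatedly tests $t := (L+U)/2$ by running Theorem~\ref{thm:feas-alg} on $C_t$ with $\delta := \epsilon \rho / (4MR)$; on success, update $U \leftarrow t$ and record the returned point, on failure update $L \leftarrow t$, and terminate once $U - L \le \epsilon/2$.

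For correctness I would invoke Lemma~\ref{lem:eps-sol-vol} with $p = 2$, $z^\star = (x^\star, y^\star)$, $a = (x^\star, \hat y)$, and slack $\tau := t - OPT$ in place of its $\epsilon$: whenever $t \ge OPT + \epsilon/2$, the lemma supplies a Euclidean ball of radius $(t - OPT)\rho/(2MR) \ge \delta$ inside $C_t$, so the feasibility procedure cannot report failure. Contrapositively, a ``no $\delta$-deep point'' response certifies $OPT > t - \epsilon/2$, so updating $L \leftarrow t$ preserves the invariant $OPT > L - \epsilon/2$; combined with $f(z_{\text{rec}}) \le U$ for the recorded point, termination yields $f(z_{\text{rec}}) \le L + \epsilon/2 < OPT + \epsilon$. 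The degenerate case $\epsilon > 2MR$ is handled by returning $z_0$ directly, since then the Lipschitz hypothesis forces any feasible point to be $\epsilon$-optimal.

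For complexity, the binary search runs for $O(\log(MR/\epsilon))$ rounds and each round invokes Theorem~\ref{thm:feas-alg} with the chosen $\delta$, contributing $2^{O(n\log(n+d))} \log(R/\delta) = 2^{O(n\log(n+d))} \log(MR/(\rho\epsilon))$ calls to the derived oracle, each costing one call to each original oracle. Multiplying gives $\icomp_{\cA} \le 2^{O(n\log(n+d))} (\log(MR/(\rho\epsilon)))^2$; the bound on $\comp_{\cA}$ follows identically by multiplying the per-round total-complexity bound of Theorem~\ref{thm:feas-alg}. The main obstacle to get right is the coordination between the outer tolerance $\epsilon/2$ and the inner tolerance $\delta$: one must pick $\delta = \Theta(\epsilon\rho/(MR))$ so that Lemma~\ref{lem:eps-sol-vol} certifies a $\delta$-deep point precisely in the regime where the binary search needs a positive answer, while keeping $\log(R/\delta) = O(\log(MR/(\rho\epsilon)))$; everything else is straightforward bookkeeping.
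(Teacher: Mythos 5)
Your proposal is correct and follows essentially the same route as the paper's proof: binary search on the objective value, testing each sublevel set $C \cap \{z : f(z) \le \gamma\}$ with the feasibility algorithm of Theorem~\ref{thm:feas-alg} at depth $\delta = \rho\epsilon/(4MR)$, and using Lemma~\ref{lem:eps-sol-vol} with $z^\star = (x^\star,y^\star)$, $a = (x^\star,\hat y)$ to guarantee success whenever the guess exceeds $OPT + \epsilon/2$, yielding the same $2^{O(n\log(n+d))}\left(\log\left(\frac{MR}{\rho\epsilon}\right)\right)^2$ bound. The one step worth stating explicitly (the paper does so parenthetically) is that the ball supplied by Lemma~\ref{lem:eps-sol-vol} is centered on the segment between $z^\star$ and $a$, which lies entirely in the fiber $\{x^\star\}\times\R^d$, so its center is a mixed-integer point---precisely what is needed to contradict the failure criterion of Theorem~\ref{thm:feas-alg}; your choice of $z^\star$ and $a$ already ensures this.
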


\begin{proof} If $\epsilon> 2MR$, then any feasible solution is an $\epsilon$-approximate solution, so we may simply run the feasibility algorithm from Theorem~\ref{thm:feas-alg} with $\delta:= \rho$. Thus, we assume that $\frac{\epsilon}{2MR} \leq 1$.

We use a standard binary search technique to reduce the problem to a feasibility problem. In particular, we use the algorithm in Theorem~\ref{thm:feas-alg} to test if $C \cap \{z: f(z) \leq \gamma\} = \emptyset$ for some guess $\gamma$ of the optimum value $OPT$. Lemma~\ref{lem:eps-sol-vol} implies that for $\gamma \geq OPT + \frac{\epsilon}{2}$, the set $C \cap \{z: f(z) \leq \gamma\}$ contains a Euclidean ball of radius $\delta:= \frac{\rho\epsilon}{4MR}$ centered at a mixed-integer point in $\Z^n\times \R^d$ (note that because of our strict feasbiility assumption both $z^\star$ and $a$ can be taken as mixed-integer points in the same fiber when applying Lemma~\ref{lem:eps-sol-vol}). Thus, for $\gamma \in [OPT + \frac{\epsilon}{2}, OPT + \epsilon]$, the algorithm in Theorem~\ref{thm:feas-alg} will compute $\hat z \in C \cap (\Z^n \times \R^d)$ with $f(\hat z) \leq \gamma \leq OPT+\epsilon$.

Since the difference between the maximum and the minimum values of $f$ over the Euclidean ball of radius $R$ is at most $2MR$, we need to make at most $\log\left(\frac{4MR}{\epsilon}\right)$ guesses for $\gamma$ in the binary search. The result now follows from the complexity bounds in Theorem~\ref{thm:feas-alg}.\end{proof}

\begin{remark} For ease of exposition, we first presented a feasibility algorithm in Theorem~\ref{thm:feas-alg} and then reduced the optimization problem to the feasibility problem using binary search in Theorem~\ref{thm:opt-alg}. One can do away with the binary search in the following way. If $\epsilon > 2MR$, then any feasible solution is an $\epsilon$-approximate solution, so we may simply run the feasibility algorithm from Theorem~\ref{thm:feas-alg} with $\delta:= \rho$. Otherwise, we follow the feasibility algorithm from Theorem~\ref{thm:feas-alg} with $\delta:= \frac{\rho\epsilon}{2MR}$. Since $\delta \leq \rho$, the algorithm is guaranteed to visit feasible points in Case 1 or 2a of the proof of Theorem~\ref{thm:feas-alg}. Once we find a feasible point, we can query the first-order oracle of the objective function $f$ at this feasible point. Any subgradient inequality/halfspace that shaves off this point satisfies the condition in Theorem~\ref{thm:ellipsoid}, similar to the analysis of Case 1 or 2a in Theorem~\ref{thm:feas-alg}. One appeals to Theorem~\ref{thm:ellipsoid} to obtain a new ellipsoid with reduced volume and the algorithm continues with this new ellipsoid. At the end, the algorithm selects the feasible point with the smallest objective value amongst all the feasible points it visits. This is similar to the idea in the proof of the upper bound in Theorem~\ref{thm:information-comp}. Since the binary search is eliminated, one obtains a slightly better information complexity of $\icomp\textstyle{_{\cA}}(n,d,R,M,\rho) \leq 2^{O(n\log(n + d))}\log\left(\frac{MR}{\rho\epsilon}\right)$.\end{remark}

\begin{remark}\label{rem:pure-cont}[Pure continuous case] Stronger results can be obtained in the pure continuous case, i.e., $n=0$. First, in Case 1 of the algorithm, we use $\beta = 0$ instead of $\beta = \frac{1}{n+d+1}$, reducing the volume of the ellipsoid by a factor $e^{-\frac{1}{5(n+d)}}$ every time. Thus we make a factor of $(n+d+1)^2$ less number of iterations in Case 1 of the proof of Theorem~\ref{thm:feas-alg}. Moreover, there is no recursion needed and thus, the algorithm's information complexity is $O\left(d^2\log\left(\frac{MR}{\rho\epsilon}\right)\right)$ with an additional computational overhead of $O(d^2)$ for computing the new ellipsoids. This is the classical {\em ellipsoid algorithm} for convex optimization. Thus, one obtains an $\epsilon$-approximation algorithm for the optimization problem that differs only by a factor of the dimension $d$ from the $\epsilon$-information complexity bound given in Theorem~\ref{thm:information-comp}\footnote{There is a slight discrepancy because of the use of the $\|\cdot \|_\infty$-norm for the information complexity bound (see Theorem~\ref{thm:information-comp}), and the use of $\|\cdot \|_2$-norm here. This adds a $\log(d)$ factor to the complexity of the ellipsoid algorithm, compared to the information complexity bound. We are not aware of any work that resolves this discrepancy.}. Vaidya~\cite{Vaidya1996} designed an algorithm whose information complexity matches Theorem~\ref{thm:information-comp}'s $\epsilon$-information complexity bound of $O\left(d\log\left(\frac{MR}{\rho\epsilon}\right)\right)$, with the same overall complexity as the ellipsoid algorithm. See~\cite{anstreicher1997vaidya,jiang2020improved,lee2015faster} for improvements on the overall complexity of Vaidya's algorithm. Lemma~\ref{thm:ellipsoid} with $\beta > 0$ is also used in continuous convex optimization under the name of the {\em shallow cut ellipsoid method}; see~\cite{GroetschelLovaszSchrijver-Book88} for details.
\end{remark}

\begin{remark}\label{rem:pure-int}[Pure integer case] For the pure integer case, i.e., $d = 0$ one can strengthen both Theorems~\ref{thm:feas-alg} and~\ref{thm:opt-alg} by removing the ``strict feasibility" type assumptions. In particular, one can prove a variant of Theorem~\ref{thm:feas-alg} with an exact feasibility algorithm that either reports a point in $C\cap \Z^n$ or correctly decides that $C \cap \Z^n = \emptyset$. One observes that if the volume of the ellipsoid in Case 2a falls below $\frac{1}{n!}$, one can be sure that all integer points in $C$ lie on a single hyperplane. This is because otherwise there are affinely independent points $x_1, \ldots, x_{n+1} \in C \cap \Z^n$ and the convex hull of these points has volume at least $\frac{1}{n!}$. Thus,  we can recurse on the lower dimensional problem. For more details see~\cite{Dadush12}. Another approach is to simply stop the iterations in Case 2a when the ellipsoid has volume less than 1. Then one can show that there is a translate of this ellipsoid that does not intersect $\Z^n$. Applying Theorem~\ref{thm:flatness}, one can again find $n$ lower dimensional slices to recurse on. This idea was explored in~\cite{OertelWagnerWeismantel14} for polyhedral outer and inner approximations. We thus obtain an exact optimization algorithm with information complexity $2^{O(n\log(n))}\log(R)$ and overall complexity $2^{O(n\log(n))}\poly(\log(nR))$.
\end{remark}

\begin{remark} The information or overall complexity bounds presented in Theorems~\ref{thm:feas-alg} and~\ref{thm:opt-alg} are not the best possible ones. There is a general consensus in the discrete optimization community that the right bound is $2^{O(n\log n)}\poly\left(d,\log\left(\frac{MR}{\rho\epsilon}\right)\right)$. Thus, the dependence on the dimensions $n$ (number of integer variables) and $d$ (number of continuous variables) is $2^{O(n\log n)}\poly(d)$ instead of $2^{n\log(n+d)} = (n+d)^{O(n)}$. In other words, the degree of the polynomial function of $d$ is independent of $n$ in the new stated bound. 

How can this be achieved? Observe that if one could work with simply the projection of the convex set on to the space of the integer variables, then one can reduce the problem to the pure integer case discussed in Remark~\ref{rem:pure-int} (assuming one has at least some integer constrained variables; otherwise, one defaults to Remark~\ref{rem:pure-cont} for the continuous case). Indeed, this was the idea originally presented for the mixed-integer {\em linear} case in Lenstra's paper~\cite{Lenstra83}. In the general nonlinear setting, this can be achieved if one can design a separation oracle for projections of convex sets, given access to separation oracles for the original set, that runs in time polynomial in $n,d$. This can be done via a result that is colloquially called ``equivalence of separation and optimization". This circle of ideas roughly says the following: given access to a separation oracle to a convex set, one can optimize linear functions over it in time that is polynomial in the dimension of the convex set (and the parameters $R, \rho, \epsilon$ and the objective vector size), and conversely, if one can optimize over the set one can implement a separation oracle by making polynomially many calls to the optimization oracle. The first part of this equivalence is simply a restatement of Theorems~\ref{thm:feas-alg} and~\ref{thm:opt-alg}; in fact, one is only concerned with the continuous case. We refer the reader to~\cite{lovasz1986algorithmic,GroetschelLovaszSchrijver-Book88} for details on the full equivalence. Coming back to projections: using the separation oracle for the original set, one can implement an optimization oracle for it. This optimization oracle gives an optimization oracle over the projection since optimizing a linear function over the projection is the same as optimizing over the original set. Using the equivalence, this gives a separation oracle for the projection. Now one can appeal to the arguments in Remark~\ref{rem:pure-int}.

However, all of these arguments are quite delicate and necessarily require very careful approximations. Thus, while these arguments should in principle work, the author is not aware of any source in the literature where all the tedious details have been fully worked out, except in the rational, linear case from Lenstra's original paper~\cite{Lenstra83}. Our exposition here is considerably simpler because it avoids these technicalities, but this is at the expense of the weaker bounds stated in Theorems~\ref{thm:feas-alg} and~\ref{thm:opt-alg}. In Lenstra's original way of doing this, the equivalence of separation and optimization is not needed since he works directly with an optimization oracle for (rational) linear programming for which an exact polynomial time algorithm has been known since Khachiyan's work~\cite{Khachiyan}, which builds on the ellipsoid algorithm discussed in Remark~\ref{rem:pure-cont}. See~\cite{Lenstra83} for more details.
\end{remark}

\paragraph{Brief historical comments.} The ideas presented in this section are refinements and improvements over seminal ideas of Lenstra~\cite{Lenstra83}, and hence our tribute in the title of this section. His original 1983 paper investigated the mixed-integer {\em linear} case, i.e., when $C$ is a polytope and $f$ is a linear function~\cite{Lenstra83}. His insights were soon extended to handle the general nonlinear case in~\cite{GroetschelLovaszSchrijver-Book88}. Kannan~\cite{Kannan87} achieved a breakthrough in the complexity bounds -- improving from $2^{O(n^3)}$ dependence on the number of integer variables to $2^{O(n\log n)}$ -- by modifying the algorithm to recurse over lower dimensional affine spaces, as opposed to hyperplanes as discussed above. We refer to~\cite{Dadush12,Heinz05,KhachiyanPorkolab00,hildebrand2013new} for a representative sample of important papers since then. See also Fritz Eisenbrand's excellent survey chapter in~\cite{Eisenbrand2010}. To the best of the author's knowledge, in the pure integer case the sharpest constant in the exponent of $2^{O(n\log n)}$ is derived in Daniel Dadush's Ph.D. thesis. This requires the use of highly original and technically deep ideas~\cite{Dadush12}.

\subsection{Pruning, nontrivial cutting planes and branch-and-cut}\label{sec:branch-and-cut}

The algorithm presented in Section~\ref{sec:lenstra} utilizes only trivial cutting planes (see Definition~\ref{def:branching-and-cuts}) and solves the optimization problem by reducing to the feasibility problem via binary search. Modern solvers for mixed-integer optimization utilize nontrivial cutting planes and also use a crucial ingredient called {\em pruning}. We now present the general framework of {\em branch-and-cut} methods which incorporate both these techniques. The algorithms in Section~\ref{sec:lenstra} will then be seen to be essentially special instances of such methods.

\begin{definition} A family $\D$ of disjunctions is called a {\em branching scheme}. A {\em cutting plane paradigm} is a map $\CP$ that takes as input any closed, convex set $C$ and $\CP(C)$ is a family of cutting planes for $C \cap (\Z^n\times \R^d)$. $\CP(C)$ may contain trivial and/or nontrivial cutting planes, and may even be empty for certain inputs $C$.\end{definition}

\begin{example}\label{ex:CG-disj}
\begin{enumerate}
\item {\em Chv\'atal-Gomory cutting plane paradigm:} Given any convex set $C\subseteq \R^n \times \R^d$, define $$\CP(C):=\{H' : H' = \conv(H \cap (\Z^n \times \R^d)), \; H \textrm{ rational halfspace with }H \supseteq C\}.$$

$H' \in \CP(C)$ is nontrivial if and only if $H$ is of the form $\{(x,y)\in \Z^n \times \R^d: \langle a, x \rangle \leq b\}$ for some $a \in \Z^n$ with relatively prime coordinates and $b \not\in \Z$, in which case $H' = \{(x,y)\in \Z^n \times \R^d: \langle a, x \rangle \leq \lfloor b\rfloor\}$. 
\item {\em Disjunctive cuts:} Given any family of disjunctions (branching scheme) $\D$, the {\em disjunctive cutting plane paradigm based on $\D$} is defined as $$\CP(C):=\{H' \textrm{ halfspace}: H' \supseteq C \cap D, \; D \in \D\}.$$ The collection of halfspaces $H'$ valid for $C \cap D$ are said to be the {\em cutting planes derived from the disjunction $D$}. These are valid cutting planes since $\Z^n \times \R^d \subseteq D$ by definition of a disjunction, and therefore $C\cap (\Z^n\times \R^d) \subseteq C \cap D \subseteq  H'$. A disjunction $D$ produces nontrivial cutting planes for a compact, convex set $C$ if and only if at least one extreme point of $C$ is not contained in $D$.

 \end{enumerate}
\end{example}

\begin{table}[htbp]
\begin{tabular}{l}
\hline
{\bf Branch-and-cut framework based on a branching scheme $\D$ and cutting plane paradigm $\CP$} \\
\hline
\\
{\bf Input:} A closed, convex set $C \subseteq \R^n \times \R^d$, a convex function $f: \R^n\times \R^d \to \R$, error guarantee $\epsilon > 0$, and \\
a relaxation $X\subseteq \R^n \times \R^d$ which is closed, convex and contains $C$.
\\
{\bf Output:} A point $z^\star \in C \cap (\Z^n \times \R^d)$ such that $f(z^\star) \leq OPT + \epsilon$, where $OPT = \inf\{f(z): z \in C\}$.
\\
\\
1. Initialize a set $L = \{X\}$. 
Initialize $UB = +\infty$.\\
2. While $L \neq \emptyset$ do: \\
\hspace{1cm} a. [Node selection] Select an element $N \in L$ and update $L:= L\setminus\{N\}$. \\
\hspace{1cm} b. [Pruning] If it can be verified that $\inf\{f(z) : z \in C\cap N\} \geq UB - \epsilon$, then continue the While loop.\\
\hspace{1.45cm} Else, select a test point $\hat z$ in $N$.\\
\hspace{1cm} c. If $\hat z \in C \cap(\Z^n \times \R^d)$, obtain a subgradient $h \in \partial f(\hat z)$ and add the subgradient halfspace \\
\hspace{1.45cm} $H = \{z: \langle h, z - \hat z \rangle \leq 0\}$ to all the elements in $L$, i.e., update $N:= N \cap H$ for all $N \in L$.\\
\hspace{1.45cm} Additionally, if $f(\hat z) < UB$, then update $UB = f(\hat z)$ and $z^\star = \hat z$.\\
\hspace{1cm} d. If $\hat z \not\in C \cap(\Z^n \times \R^d)$, decide whether to BRANCH or CUT. \\
\\
\hspace{2cm} If BRANCH, then choose a disjunction $D = Q_1 \cup \ldots \cup Q_k$ in $\D$ such that $\hat z \not\in D$. \\
\hspace{2cm} Select sets (relaxations) $N_1, \ldots, N_2$ such that $N\cap Q_i \subseteq N_i$. \\
\hspace{2cm} Update $L := L \cup \{N_1, \ldots, N_k\}$. \\
\\
\hspace{2cm} If CUT, then choose a cutting plane $H \in \CP(C \cap N)$ such that $\hat z \not\in H$. \\
\hspace{2cm} Select a set $N'$ such that $N\cap H \subseteq N'$. Update $L := L \cup \{N'\}$. \\
\\
\hline
\end{tabular}
\end{table}

\begin{remark} We obtain a specific branch-and-cut procedure once we specify the following things in the framework above.
\begin{enumerate}
\item In Step 2a., we must decide on a strategy to select an element from $L$. In the case of the algorithms presented in Section~\ref{sec:lenstra}, this would be the choice of a ``slice" to recurse on.
\item In Step 2b., we must decide on a strategy to verify the condition $\inf\{f(z) : z \in N\} \geq UB + \epsilon$. In the case of the algorithms presented in Section~\ref{sec:lenstra}, this is determined by a volume condition on $N$ (which is an ellipsoid). Another common strategy is used in linear integer optimization, where $C \cap N$ is a polyhedron and linear optimization methods like the simplex method or an interior-point algorithm is used to determine $\inf\{f(z) : z \in C\cap N\}$. More generally, one could have a convex optimization subroutine suitable for the class of problems under study.
\item In Step 2b., $\inf\{f(z) : z \in C \cap N\} < UB + \epsilon$ and one must select a test point $\hat z \in N$ and one must have a procedure/subroutine for this. In the algorithms presented in Section~\ref{sec:lenstra}, this was chosen as the center of the ellipsoid in Step I, and in Step II it was chosen using the CVP subroutine (and a convex quadratic minimization over the corresponding fiber if the CVP returned a point in the inner ellipsoid). In most solvers, this test point is taken as an optimal or $\epsilon$-approximate solution to the convex optimization problem $\inf\{f(z) : z \in C\cap N\}$ or $\inf\{f(z) : z \in N\}$.
\item In Step 2d., one must have a strategy for deciding whether to branch or to cut, and in either case have a strategy for selecting a disjunction or a cutting plane. The decision to branch might fail because there is no disjunction $D$ in the chosen branching scheme $\D$ that does not contain $\hat z$. In such a case, we simply continue the While loop. In the algorithms from Section~\ref{sec:lenstra}, the disjunction family used was the split disjunctions defined in Example~\ref{ex:Disj-CP-ex}: the ``slices" can be seen as branching on the disjunctions $\{(x,y): \langle w, x \rangle \leq j\} \cup \{(x,y): \langle w, x \rangle \geq j+1\}$.

If the decision is to add a cutting plane, one may add a trivial cutting plane valid for $C\cap N$, as was done in the algorithms in Section~\ref{sec:lenstra}. One may also fail to find a cutting plane that removes $\hat z$, because either the cutting plane paradigm can produce no such cutting plane, i.e., $\CP(C\cap N) = \emptyset$, or because the strategy chosen fails to find such a cutting plane in $\CP(C\cap N)$ even though one exists. In such a case, we simply continue the While loop.

Finally, in Step 2d., we must have a strategy to select the relaxations $N_1, \ldots, N_k$ if the decision is to branch, or we must have a strategy to select a relaxation $N'$ if the decision is to cut. In the algorithms in Section~\ref{sec:lenstra}, these relaxations were taken as ellipsoids.
\item If an algorithm based on the branch-and-cut framework above decides never to branch in Step 2d., it is called a {\em pure cutting plane} algorithm. If an algorithm decides never to use cutting planes in Step 2d., it is called a {\em pure branch-and-bound} algorithm.
\end{enumerate}
\end{remark}

\begin{remark} In most solvers, the relaxations $N_i$ are simply taken to be $N \cap Q_i$ in a decision to branch, and the relaxation $N'$ is simply taken to be $N\cap H$ in a decision to cut. However, as mentioned above, in the algorithms from Section~\ref{sec:lenstra} we consider ellipsoidal relaxations of the $N\cap H$ after a (trivial) cutting plane is added, and ellipsoidal relaxations of the ``slices".
\end{remark}
\bigskip

\paragraph{Does this help?} In practice, pruning and nontrivial cutting planes make a huge difference~\cite{bixby2012brief,Bixby,lodi2010mixed}. Turning these off will bring most of the solvers to a grinding halt on even small scale problems. Nevertheless, from a theoretical perspective, researchers have not been able to improve on the $2^{O(n\log(n+d))}$ algorithm from Section~\ref{sec:lenstra} by utilizing pruning and nontrivial cutting planes for the general problem; see~\cite{basu-split-2D,dey2020branch,naderi2021worst} for examples of positive results in restricted settings. Another empirical fact is that if branching is completely turned off and only cutting planes are used, then again the solvers' performance degrades massively. Recently, some results have been obtained that provide some theoretical basis to these empirical observations that the combination of branching and cutting planes performs significantly better than branching alone or using cutting planes alone. We present some of these results now.

The next definition is inspired by the following simple intuition. It has been established that certain branching schemes can be simulated by certain cutting plane paradigms in the sense that for the problem class under consideration, if we have a pure branch-and-bound algorithm based on the branching scheme, then there exists a pure cutting plane algorithm for the same class that has complexity at most a polynomial factor worse than the branch-and-bound algorithm. Similarly, there are results that establish the reverse. See~\cite{dash2002exponential,dash2005exponential,basu-BB-CP,basu-BB-CP-II,fleming2021power,beame_et_al:LIPIcs:2018:8341}, for example. In such situations, combining branching and cutting planes into branch-and-cut is likely to give no substantial improvement since one method can always do the job of the other, up to polynomial factors.

\begin{definition}\label{complementary-pair} Let $\I$ be a family of mixed-integer convex optimization problems of the form~\eqref{eq:MICO}, along with a size hierarchy (see Definition~\ref{def:size}). To make the following discussion easier, we assume that the objective function is linear. This is without loss of generality since we can introduce an auxiliary variable $v$, introduce the epigraph constraint $f(z) \leq v$ and use the linear objective ``$\inf v$".

A cutting plane paradigm $\CP$ and a branching scheme $\D$ are {\em complementary for $\I$} if there is a family of instances $\I_{\CP>\D}\subseteq \I$ such that there is a pure cutting plane algorithm based on $\CP$ that has polynomial (in the size of the instances) complexity and any branch-and-bound algorithm based on $\D$ is exponential (in the size of the instances), and there is another family of instances $\I_{\CP<\D}\subseteq \I$ where $\D$ gives a polynomial complexity pure branch-and-bound algorithm while any pure cutting plane algorithm based on $\CP$ is exponential. \end{definition}
\medskip

We wish to formalize the intuition that branch-and-cut is expected to be exponentially better than branch-and-bound or cutting planes alone for complementary pairs of branching schemes and cutting plane paradigms. But we need to make some mild assumptions about the branching schemes and cutting plane paradigms. {\em All known branching schemes and cutting plane methods from the literature satisfy the following conditions}.

\begin{definition}\label{def:regular} A branching scheme is said to be {\em regular} if no disjunction involves a continuous variable, i.e., each polyhedron in the disjunction is described using inequalities that involve only the integer constrained variables.

A branching scheme $\D$ is said to be {\em embedding closed} if disjunctions from higher dimensions can be applied to lower dimensions. More formally, let $n_1$, $n_2$, $d_1$, $d_2 \in \N$. If $D \in \D$ is a disjunction in $\R^{n_1} \times \R^{d_1} \times  \R^{n_2} \times \R^{d_2}$ with respect to $\Z^{n_1} \times \R^{d_1} \times \Z^{n_2} \times \R^{d_2}$, then the disjunction $D\cap (\R^{n_1} \times \R^{d_1} \times \{0\}^{n_2} \times \{0\}^{d_2})$, interpreted as a set in $\R^{n_1} \times \R^{d_1}$, is also in $\D$ for the space $\R^{n_1}\times \R^{d_1}$ with respect to $\Z^{n_1} \times \R^{d_1}$ (note that $D\cap (\R^{n_1} \times \R^{d_1} \times \{0\}^{n_2} \times \{0\}^{d_2})$, interpreted as a set in $\R^{n_1} \times \R^{d_1}$, is certainly a disjunction with respect to $\Z^{n_1} \times \R^{d_1}$; we want $\D$ to be closed with respect to such restrictions).

A cutting plane paradigm $\CP$ is said to be {\em regular} if it has the following property, which says that adding ``dummy variables" to the formulation of the instance should not change the power of the paradigm. Formally, let $C \subseteq \R^{n} \times \R^{d}$ be any closed, convex set and let $C' = \{(x,t) \in \R^{n}\times \R^{d} \times \R: x \in C,\;\; t = \langle f, x \rangle\}$ for some $f \in \R^n$. Then if a cutting plane $\langle a, x \rangle \leq b$ is derived by $\CP$ applied to $C$, i.e., this inequality is in $\CP(C)$, then it should also be in $\CP(C')$, and conversely, if $\langle a, x\rangle + \mu t \leq b$ is in $\CP(C')$, then the equivalent inequality $\langle a + \mu f, x \rangle \leq b$ should be in $\CP(C)$.

A cutting plane paradigm $\CP$ is said to be {\em embedding closed} if cutting planes from higher dimensions can be applied to lower dimensions. More formally, let $n_1, n_2, d_1, d_2 \in \N$. Let $C \subseteq \R^{n_1} \times \R^{d_1}$ be any closed, convex set. If the inequality $\langle c_1, x_1\rangle + \langle a_1, y_1\rangle + \langle c_2, x_2\rangle + \langle a_2, y_2\rangle \leq \gamma$ is a cutting plane for $C \times \{0\}^{n_2} \times \{0\}^{d_2}$ with respect to $\Z^{n_1} \times \R^{d_1} \times \Z^{n_2} \times \R^{d_2}$ that can be derived by applying $\CP$ to $C \times \{0\}^{n_2} \times \{0\}^{d_2}$, then the cutting plane $\langle c_1, x_1\rangle+ \langle a_1, y_1\rangle \leq \gamma$ that is valid for $C \cap (\Z^{n_1} \times \R^{d_1})$ should also belong to $\CP(C)$.

A cutting plane paradigm $\CP$ is said to be {\em inclusion closed}, if for any two closed convex sets $C \subseteq C'$, we have $\CP(C') \subseteq \CP(C)$. In other words, any cutting plane derived for $C'$ can also be derived for a subset $C$.
\end{definition}

\begin{theorem}\label{thm:BC>BB/CP}~\cite[Theorem 1.12]{basu-BB-CP-II} Let $\D$ be a regular, embedding closed branching scheme and let $\CP$ be a regular, embedding closed, and inclusion closed cutting plane paradigm such that $\D$ includes all variable disjunctions and $\CP$ and $\D$ form a complementary pair for a mixed-integer convex optimization problem class $\I$. Then there exists a family of instances in $\I$ such that there exists a polynomial complexity branch-and-cut algorithm, whereas any branch-and-bound algorithm based on $\D$ and any cutting plane algorithm based on $\CP$ are of exponential complexity.
\end{theorem}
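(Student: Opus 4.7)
The plan is to build a family of ``combined'' instances by gluing an instance from $\I_{\CP > \D}$ together with an instance from $\I_{\CP < \D}$ through a single auxiliary binary variable that selects between the two, so that the variable disjunction on this selector gives branch-and-cut exactly one free branching step that decouples the problem into two pieces, each of which is polynomial for precisely one of $\CP$ or $\D$. Concretely, given $I_1 = (f_1, C_1) \in \I_{\CP > \D}$ in $\Z^{n_1}\times \R^{d_1}$ and $I_2 = (f_2, C_2) \in \I_{\CP < \D}$ in $\Z^{n_2}\times \R^{d_2}$, I would construct $I(I_1, I_2)$ on variables $(x_1, y_1, x_2, y_2, z) \in \Z^{n_1+n_2+1}\times \R^{d_1+d_2}$ with the disjunctive feasible region
$$C \;=\; \conv\bigl((C_1 \times \{0\}^{n_2+d_2} \times \{0\}) \;\cup\; (\{0\}^{n_1+d_1}\times C_2 \times \{1\})\bigr),$$
together with a linear objective that, after the standard epigraph reformulation, encodes ``$\min\{f_1$ if $z=0$, $f_2$ if $z=1\}$''. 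Since $\D$ is assumed to contain all variable disjunctions, the split $\{z\le 0\}\cup\{z\ge 1\}$ is available; restricting to either branch recovers $I_1$ or $I_2$ embedded in the larger space.

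For the upper bound (branch-and-cut is polynomial on this family), I would branch once on $z$. On the $z=0$ node the residual problem is $I_1$ padded by fixed zero coordinates; by regularity and embedding closedness of $\CP$ the same $\CP$-proof that solves $I_1$ in polynomial length lifts to this node, so pure cutting planes finish it polynomially. Symmetrically, on the $z=1$ node the problem is $I_2$ padded by zeros, and embedding closedness of $\D$ lets the polynomial pure branch-and-bound proof for $I_2$ operate there. Combining yields a polynomial branch-and-cut tree.

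For the lower bounds I would run two reductions. Suppose a pure branch-and-bound algorithm using $\D$ solves $I(I_1,I_2)$ with a subexponential tree $T$. Consider the sub-tree $T_0$ obtained by retaining only nodes whose accumulated disjunctions are consistent with $z=0$. Because $\D$ is regular and embedding closed, every disjunction along $T_0$, intersected with $\{z=0, x_2=0, y_2=0\}$ and reinterpreted in the native $(x_1, y_1)$-space, is still a member of $\D$ and faithfully solves $I_1$; this contradicts $I_1 \in \I_{\CP>\D}$. The pure cutting-plane case is analogous: from a subexponential $\CP$-proof for $I(I_1,I_2)$, take each derived inequality and first apply inclusion closedness to pass from $C$ to the tightening $C \cap \{z=1, x_1=0, y_1=0\}$, then embedding closedness (together with regularity, so the inequality does not secretly exploit the killed continuous coordinates) to descend it to a cutting plane for $I_2$ in its native dimensions, producing a short $\CP$-proof for $I_2$ and contradicting $I_2 \in \I_{\CP<\D}$.

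The main obstacle I expect is the cutting-plane lower bound: the chain \emph{inclusion closed} $\to$ \emph{embedding closed} must be applied in precisely the right order and to the right ambient sets, and one has to verify that the objective-epigraph variable and the zero-pinned coordinates cannot be exploited by $\CP$ to produce inequalities that have no counterpart in the reduced $I_2$-instance. The regularity of $\CP$ (which says adding a single linear ``dummy'' equation does not change derivability) is exactly what handles the epigraph variable, and embedding closedness deals with the zero-pinned coordinates, but lining these up so that the resulting $\CP$-proof for $I_2$ has length at most a polynomial factor larger than the original proof is where the bookkeeping is most delicate. By contrast, the branching lower bound is cleaner because regularity of $\D$ forbids disjunctions from touching continuous variables in the first place, and embedding closedness transports integer disjunctions to lower dimensions without rescaling.
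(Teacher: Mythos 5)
Your proposal follows essentially the same route as the paper: the paper's own treatment of Theorem~\ref{thm:BC>BB/CP} is only a sketch (deferring details to~\cite{basu-BB-CP-II}), and that sketch is precisely your construction --- embed an instance from $\I_{\CP>\D}$ and one from $\I_{\CP<\D}$ as the two faces of a convex set selected by a single binary variable, so that one variable disjunction decouples them, with cutting planes finishing one face and branching the other, while complementarity forces any pure method to spend exponential time on one of the faces. Your restriction arguments for the lower bounds (and your flagged bookkeeping concerns about regularity, embedding closedness, and inclusion closedness) correspond to exactly the technical details the paper declines to work out and outsources to the cited reference.
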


The rough idea of the proof of Theorem~\ref{thm:BC>BB/CP} is to embed pairs of instances from $\I_{\CP>\D}$ and $\I_{\CP<\D}$ as faces of a convex set such that a single variable disjunction results in these instances as the subproblems in a branch-and-cut algorithm. On one subproblem, one uses cutting planes and on the other subproblem one uses branching. However, since $\CP$ and $\D$ are complementary, a pure cutting plane or pure branch-and-bound algorithm takes exponential time in processing one or the other of the faces. The details get technical and the reader is referred to~\cite{basu-BB-CP-II}.

\begin{example}\label{ex:complementary} We now present a concrete example of a complementary pair that satisfies the other conditions of Theorem~\ref{thm:BC>BB/CP}. Let $\I$ be the family of mixed-integer linear optimization problems described in point 2. of Example~\ref{ex:concrete-ex}, with standard ``binary encoding" oracles described in point 2. of Example~\ref{ex:concrete-oracles} and size hierarchy as defined in point 2. of Example~\ref{ex:concrete-size}. Let $\CP$ to be the Chv\'atal-Gomory paradigm (point 1. in Example~\ref{ex:CG-disj}) and $\D$ to be the family of variable disjunctions (point 1. in Example~\ref{ex:Disj-CP-ex}). They are both regular and $\D$ is embedding closed. The Chv\'atal-Gomory paradigm is also embedding and inclusion closed. 

Consider the so-called ``Jeroslow instances": For every $n\in \N$, $\max\{\sum_{i=1}^n x_i : \sum_{i=1}^n x_i \leq \frac{n}{2},\;\; x \in [0,1]^n, \;\; x\in \Z^n\}$. The single Chv\'atal-Gomory cut $\sum_{i=1}^n x_i \leq \lfloor \frac{n}{2} \rfloor$ proves optimality, whereas any branch-and-bound algorithm based on variable disjunctions has complexity at least $2^{\lfloor \frac{n}{2}\rfloor}$~\cite{Jeroslow1974}. On the other hand, consider the set $T_h \in \R^2$, where $T_h = \conv\{(0,0), (1,0), (\frac12,h)\}$ and consider the family of problems for $h \in \N$: $\max\{x_2: x \in T, \;\; x\in \Z^2\}$. Any Chv\'atal-Gomory paradigm based algorithm has exponential complexity in the size of the input, i.e., every proof has length at least $\Omega(h)$~\cite{sch}. On the other hand, a single disjunction on the variable $x_1$ solves the problem.

\end{example}

Example~\ref{ex:complementary} shows that the classical Chv\'atal-Gomory cuts and variable branching are complementary and thus Theorem~\ref{thm:BC>BB/CP} implies that they give rise to a superior branch-and-cut routine when combined, compared to their stand-alone use. The Chv\'atal-Gomory cutting plane paradigm and variable disjunctions are the most widely used pairs in state-of-the-art branch-and-cut solvers. We thus have some theoretical basis for explaining the success of this particular combination.

In~\cite{basu-BB-CP,basu-BB-CP-II}, the authors explore whether certain widely used cutting plane paradigms and branching schemes form complementary pairs. We summarize their results here in informal terms and refer the two papers cited for precise statements and proofs.

\begin{enumerate}
\item {\em Lift-and-project} cutting planes (disjunctive cutting planes based on variable disjunctions -- see point 2. in Example~\ref{ex:CG-disj}) and variable disjunctions are {\em not} a complementary pair for $0/1$ pure integer convex optimization problems, i.e., $C \subseteq [0,1]^n$. Any branch-and-bound algorithm based on variable disjunctions can be simulated by a cutting plane algorithm with the same complexity\footnote{There is a technical problem that arises here between the notions of algorithm and {\em proof}. We have omitted all discussions of cutting plane and branch-and-bound proofs here, which are powerful tools to prove unconditional lower bounds on these algorithms. The precise statement is that any branch-and-bound proof based on variable disjunctions can be replaced by a lift-and-project cutting plane proof of the same size. See~\cite{basu-BB-CP} for details.}. Moreover, there are instances of graph stable set problems where there is a lift-and-project cutting plane algorithm with polynomial complexity, but any branch-and-bound algorithm based on variable disjunctions has exponential complexity. See Theorems 2.1 and 2.2 in~\cite{basu-BB-CP}, and results in~\cite{dash2002exponential,dash2005exponential}.
\item Lift-and-project cutting planes and variable disjunctions {\em do form} a complementary pair for {\em general} mixed-integer convex optimization problems, i.e., $C$ is not restricted to be in the $0/1$ hypercube\footnote{``Lift-and-project cuts" here mean disjunctive cutting planes based on the variable disjunctions (typically the phrase ``lift-and-project" is reserved for $0/1$ problems).}. See Theorems 2.2 and 2.9 in~\cite{basu-BB-CP}.
\item Split cutting planes  (disjunctive cutting planes based on split disjunctions -- see point 2. in Example~\ref{ex:CG-disj}) and split disjunctions are {\em not} a complementary pair for general pure integer convex problems. Any cutting plane algorithm based on split cuts can be simulated by a branch-and-bound algorithm based on split disjunctions with the same complexity (up to constant factors)\footnote{The same caveat as in point 1. regarding algorithms versus proofs applies.}. See Theorem 1.8 in~\cite{basu-BB-CP-II}.
\end{enumerate}

\paragraph{Connections to proof complexity.} Obtaining concrete lower bounds for branch-and-cut algorithms has a long history within the optimization, discrete mathematics and computer science communities. In particular, there is a rich interplay of ideas between optimization and proof complexity arising from the fact that branch-and-cut can be used to certify emptiness of sets of the form $P \cap \{0,1\}^n$, where $P$ is a polyhedron. This question is of fundamental importance in computer science because the {\em satisfiability} question in logic can be modeled in this way. We provide here a necessarily incomplete but representative sample of references for the interested reader~\cite{beame_et_al:LIPIcs:2018:8341,dadush2020complexity,dey2021lower,dash2002exponential,dash2005exponential,dash2010complexity,chvatal1989cutting,chvatal1984cutting,chvatal1980hard,cook2001matrix,bockmayr1999chvatal,eisenbrand2003bounds,rothvoss20130,bonet1997lower,razborov2017width,impagliazzo1994upper,buss1996cutting,cook1987complexity,goerdt1990cutting,goerdt1991cutting,clote1992cutting,pudlak1997lower,pudlak1999complexity,krajivcek1998discretely,grigoriev2002complexity,fleming2021power,cook1990complexity}.

\section{Discussion and open questions}

Our presentation above necessarily selected a small subset of results in the vast literature on the complexity of optimization algorithms, even restricted to convex mixed-integer optimization. We briefly discuss three major areas that were left out of the discussion above.

\paragraph{Mixed-integer linear optimization.} If we consider the problem class discussed in point 2. of Example~\ref{ex:concrete-ex}, with algebraic oracles as described in point 2. of Example~\ref{ex:concrete-oracles}, then our $\epsilon$-information complexity bounds from Theorem~\ref{thm:information-comp} to do not apply anymore. Firstly, we have a much more restricted class of problems. Secondly, the algebraic oracles seem to be more powerful than separation oracles in the following precise sense. Using the standard size hierarchy on this class based on ``binary encodings" discussed in point 2. of Example~\ref{ex:concrete-size}, a separation/first-order oracle can be implemented easily with the standard algebraic oracle from Example~\ref{ex:concrete-oracles} in polynomial time, but it is not clear if a separation oracle can implement the algebraic oracle in polynomial time (see point 4. under ``Open Problems" below). Moreover, the $\epsilon$-information complexity with respect to the algebraic oracle is bounded by the size of the instance, because once we know all the entries of $A,B, b,c$, there is no ambiguity in the problem anymore. Nevertheless, it is well-known that the problem class is $NP$-hard~\cite{GareyJohnson-Book79}. Therefore, unless $P = NP$, the computational complexity of any algorithm for this class is not polynomial and under the so-called {\em exponential time hypothesis (ETH)}, it is expected to be exponential. 

Nevertheless, several rigorous complexity bounds have been obtained with respect to difference parameterizations of this problem class. We summarize them here with pointers to the references. We note here that the algorithms presented in Section~\ref{sec:lenstra} are variants of an algorithm that was designed by Lenstra for this problem class of mixed-integer linear optimization~\cite{Lenstra83}. We discuss below approaches that are completely different in nature.

\begin{enumerate}
\item {\em Dynamic programming based algorithms.} We restrict to pure integer instances, i.e., $d=0$ and assume all the entries of $A,b,c$ are integer. We parameterize such instances by three parameters $n,m,\Delta, W$. $\I_{m,n,\Delta,W}$ are those instances with dimension is at most $n$, where $A$ has at most $m$ rows, the absolute values of the entries of $A$ are bounded by $\Delta$, and the absolute values of $b$ are bounded by $W$. In this setting, Papadimitriou designed a dynamic programming based algorithm with complexity $O((n+m)^{2m+2}(m\cdot\max\{\Delta,W\})^{(m+1)(2m+1)})$~\cite{papadimitriou1981complexity}. This was improved recently to $O((m\Delta)^m\cdot (n+m)^3W)$ by Eisenbrand and Weismantel by a clever use of the so-called Steinitz lemma~\cite{steinitz1913bedingt}. Subsequent improvements were achieved by Jansen and Rohwedder~\cite{jansen2018integer}. Matching lower bounds, subject to the exponential time hypothesis, have been established in~\cite{fomin2016fine,knop2020tight}.
\item {\em Fixed subdeterminants.} Restricting to pure integer instances with integer data as in the previous point, another paramterization that has been considered is by $n,m,\Delta$, where $\I_{m,n,\Delta}$ are those instances with dimension is at most $n$, where $A$ has at most $m$ rows, the absolute values of $n\times n$ subdeterminants of $A$ are bounded by $\Delta$. When $\Delta=1$, a classical result in integer optimization shows that one can simply solve the linear optimization problem to obtain the optimal integer solution~\cite{sch}. In 2009, Veselov and Chirkov showed that the feasibility problem can be solved in polynomial time when $\Delta = 2$~\cite{veselov2009integer}, and the optimization version was resolved using deep techniques from combinatorial optimization by Artmann, Weismantel and Zenkulsen in~\cite{artmann2017strongly}. See also related results for general $\Delta$ in~\cite{artmann2016note,basu2021enumerating,paat2021integrality,gribanov2016width,gribanov2016integer,gribanov2020note}.
\item {\em Parameterizations based on the structure of $A$.} Over the past 25 years, a steady line of research has used algebraic and combinatorial techniques to design algorithms for pure integer optimization that exploit the structure of the constraint matrix $A$. Several parameters of interest have been defined based on different structural aspects of $A$. Listing all the literature here is impossible. Instead, we point to~\cite{eisenbrand2019algorithmic,onn2010nonlinear,de2012algebraic,Bertsimas2005,cunningham2007integer,margulies2013cunningham} and the references therein as excellent summaries and starting points for exploring this diverse field of research activity. An especially intriguing aspect of these algorithms is that they search for the optimal solution by iteratively moving from one feasible solution to a better one. In contrast, the ``Lenstra-style" algorithm presented in Section~\ref{sec:lenstra} approaches the optimal solution ``from outside" in a sense by constructing outer (ellipsoidal) approximations. Thus, the newer algebraic and combinatorial algorithms are more in the spirit of {\em interior point methods} in nonlinear optimization. Exploring the possibility of a unified ``interior point style" algorithm for convex mixed-integer optimization would contribute further to bridging the continuous and discrete sides of mathematical optimization.\end{enumerate}

\paragraph{Mixed-integer polynomial optimization.} Instead of linear constraints and objectives as in MILPs, one can consider the problem with polynomial constraints and objective. The standard oracle is algebraic as well in the sense that one can query for the value of a coefficient in a constraint or the objective. The literature on this problem is vast and touches on classical fields like algebraic geometry and diophantine equations. In fact, Hilbert's 10th problem in his famous list of 23 problems presented before the International Congress of Mathematicians in 1900 asks for the construction of an algorithm that solves polynomial equations in integer variables~\cite{hilbert23}. This problem was proven to be undecidable in a line of work spanning several decades~\cite{matiyasevich1993}. Thus, while the completely general mixed-integer polynomial optimization problem cannot be solved algorithmically, an enormous literature exists on restricted versions of the problem. For example, with no integer variables we enter the realm of real algebraic geometry or the existential theory of reals where the problem is decidable; see~\cite{BasuPollackRoyRAG} with references to a rich literature. Thus, if one has or can infer finite bounds on the integer constrained decision variables, then the undecidability issues go away since one can enumerate over the integer variables and reduce to the existential theory of reals. The literature is too vast to summarize here; instead, we point the reader to the excellent survey~\cite{koppe2012complexity} and the textbook expositions in~\cite{de2012algebraic,onn2010nonlinear,Bertsimas2005}, along with the references therein. Some recent progress has appeared in~\cite{del2016minimizing,hildebrand2016fptas,bienstock2021complexity,DelPiaWeismantel,del2018approximation,pia2017mixed,del2017polyhedral,del2018multilinear,del2020impact,del2019subdeterminants,del2021running,del2022complexity,del2022simple}.

\paragraph{Continuous convex optimization.} The information and algorithmic complexity of continuous convex optimization presented in Sections~\ref{sec:information-comp} and~\ref{sec:alg-comp} barely touch the vast literature in this area. For instance, instead of parameterizing by $d,M,R,\rho$, modern day emphasis is placed on ``dimension independent" complexity. For instance, if one focuses on unconstrained minimization instances and parameterizes the class with only two parameters: a parameter $M$ that is usually related to the Lipschitz constant of the objective function or its derivatives (if we further restrict to (twice) differentiable functions), and $R$ is a parameter that says the optimal solution is contained in a Euclidean ball of radius $R$. It is important to note that the dimension is not part of the parameterization. In the nonsmooth case, one can establish matching upper and lower bounds of $O\left(\frac{MR}{\epsilon^2}\right)$ on the information complexity of a broad class of iterative $\epsilon$-approximation algorithms. These can be improved to $O\left(\frac{MR}{\sqrt{\epsilon}}\right)$ in the smooth case. We will fail to do justice to this enormous and active area of research in this manuscript and instead point the reader to the excellent monographs~\cite{nesterov2018lectures,bubeck2014convex}.

\paragraph{Nonconvex continuous optimization.} The complexity landspace for nonconvex continuous optimization has recently seen a lot of activity and has reached a level of development paralleling the corresponding development in convex optimization complexity. Here the solution operator $S(I,\epsilon)$ typically is defined as $\epsilon$-approximate stationary points or local minima; for instance, in the smooth unconstrained case, $S(I,\epsilon)$ is the set of all points where the gradient norm is at most $\epsilon$. We recommend the recent thesis of Yair Carmon for a fantastic survey, pointers to literature and several new breakthroughs~\cite{carmon}. 

\subsection*{Open Questions}
\begin{enumerate}

\item As mentioned at the beginning of Section~\ref{sec:lenstra}, a major open question in mixed-integer optimization is to bridge the gap of $2^{O(n)}\log(R)$ tight bound on the information complexity and the $2^{O(n\log n)}\log(R)$ algorithmic complexity of Lenstra style algorithms presented in Section~\ref{sec:lenstra}, for pure integer optimization. It seems that radically new ideas are needed and Lenstra style algorithms cannot escape the $2^{O(n\log n)}$ barrier. 

\item Closing the gap between the lower and the upper bounds for $\epsilon$-information complexity in Theorem~\ref{thm:information-comp} for the truly mixed-integer case, i.e., $n,d \geq 1$ seems to be a nontrivial problem. See the discussion after the statement of Theorem~\ref{thm:information-comp} for the two different sources of discrepancy. In particular, Conjecture~\ref{conj:mixed-center} seems to be a very interesting question in pure convex/discrete geometry. It was first stated in Timm Oertel's thesis~\cite{oertel2014integer}, and the thesis and~\cite{basu2017centerpoints} contain some partial results towards resolving it. Further, taking into account the sizes of the responses to oracle queries merits further study. To the best of our knowledge, tight lower and upper bounds are not known when the size of the responses are taken into account, i.e., when one uses the strict definition of $\epsilon$-information complexity as in Definition~\ref{def:comp} of this paper. Finally, we believe that the study of $\epsilon$-information complexity of optimization with respect to oracles that are different from subgradient oracles is worth investigation.

\item The mixed-integer linear optimization problem class (point 2. in Example~\ref{ex:concrete-ex}) can be accessed through two different oracles: the standard algebraic one described in point 2. of Example~\ref{ex:concrete-oracles}, or a separation oracle. Assuming the standard size hierarchy obtained from the binary encoding sizes, it is not hard to implement a separation oracle using the algebraic oracle with polynomially many algebraic oracle calls. However, it is not clear if the algebraic oracle can be implemented in polynomial time by a separation oracle. Essentially, one has to enumerate the facets of the polyhedral feasible region $P\subseteq \R^k$, which is equivalent to enumerating the vertices of the polar $P^\star$ (after finding an appropriate center in $P$ -- see~\cite[Lemma 2.2.6]{lovasz1986algorithmic}). This can be done if we have an optimization oracle for $P^\star$ since one can iteratively enumerate the vertices by creating increasingly better inner polyhedral approximations. Roughly speaking, one maintains the current list of enumerated vertices $v_1, \ldots, v_t$ of $P^\star$ and also an inequality description of $\conv(\{v_1, \ldots, v_t\})$. Now we optimize over $P^\star$ in the direction of all the inequalities describing $\conv(\{v_1, \ldots, v_t\})$. If $\conv(\{v_1, \ldots, v_t\}) \subsetneq P^\star$, then at least one new vertex will be discovered and we can continue the process after reconvexifying. An optimization oracle for the polar $P^\star$ can be implemented in polynomial time because we have a separation oracle for $P$~\cite{sch,lovasz1986algorithmic,GroetschelLovaszSchrijver-Book88}. The only issue is that $\conv(\{v_1, \ldots, v_t\})$ may need $t^k$ inequalities in its description, which is not polynomial unless we consider the dimension $k$ to be fixed. It would be good to resolve whether the separation and algebraic oracles for mixed-integer linear optimization are polynomially equivalent.

\item Theorem~\ref{thm:BC>BB/CP} shows that a complementary pair of branching scheme and cutting plane paradigm can lead to substantial gains when combined into a branch-and-cut algorithm, as opposed to a pure branch-and-bound or pure cutting plane algorithm. Is this a characterization of when branch-and-cut is provably better? In other words, if a branching scheme and cutting plane paradigm are such that there exists a family of instances where branch-and-cut is exponentially better than branch-and-bound or cutting planes alone, then is it true that the branching scheme and the cutting plane paradigm are complementary? A result showing that branch-and-cut is superior if and only if the branching scheme and cutting plane paradigm are complementary would be a tight theoretical characterization of this important phenomenon. While this general question remains open, a partial converse to Theorem~\ref{thm:BC>BB/CP} was obtained in~\cite{basu-BB-CP-II}:

\begin{theorem}\label{thm:conv-BC}~\cite[Theorem 1.14]{basu-BB-CP-II} Let $\D$ be a branching scheme that includes all split disjunctions and let $\CP$ be any cutting plane paradigm. Suppose that for every pure integer instance and any cutting plane proof based on $\CP$ for this instance, there is a branch-and-bound proof based on $\D$ of size at most a polynomial factor (in the size of the instance) larger. Then for any branch-and-cut proof based on $\D$ and $\CP$ for a pure integer instance, there exists a pure branch-and-bound proof based on $\D$ that has size at most polynomially larger than the branch-and-cut proof.
\end{theorem}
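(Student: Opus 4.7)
The plan is to eliminate the cut nodes of the branch-and-cut proof one by one, replacing each by a split disjunction (which is available in $\D$ by hypothesis) and using the hypothesis to handle the ``infeasible side'' of the resulting split. Formally, I would induct on the number of cut nodes in the BC proof $\Pi$. If $\Pi$ is cut-free, it is already a BB proof based on $\D$ and there is nothing to prove. Otherwise, pick any cut node $v$ with polyhedron $P_v$ and cut $H = \{x : a^T x \leq b\}\in \CP(P_v)$; after the standard assumption that $a$ has rational coordinates, scale by $\alpha>0$ so that $\alpha a \in \Z^n$ is primitive. The split disjunction
$$D_H \; := \; \{x : \alpha a^T x \leq \lfloor \alpha b\rfloor\}\; \cup\; \{x : \alpha a^T x \geq \lfloor \alpha b\rfloor + 1\}$$
lies in $\D$, and I replace the cut at $v$ by a branch on $D_H$.

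On the left child, the polyhedron $P_v \cap \{\alpha a^T x \leq \lfloor \alpha b\rfloor\}$ is a subset of $P_v \cap H$, so the original BC subtree that was attached to $v$ reattaches verbatim: its cuts and branches remain valid certifications of infeasibility for the smaller polyhedron, using inclusion-closedness of $\CP$ (in the sense of Definition~\ref{def:regular}) to guarantee that a cut in $\CP(P_v \cap H)$ is still in $\CP$ of any subset. On the right child, the polyhedron $P_v \cap \{\alpha a^T x \geq \lfloor \alpha b\rfloor + 1\}$ is disjoint from $H$ (since $\alpha a^T x \geq \lfloor \alpha b\rfloor + 1$ forces $a^T x > b$) and hence contains no integer point of $P$. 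A one-step cutting-plane proof --- apply $H$ itself, which is valid on the right polyhedron again by inclusion-closedness --- certifies this infeasibility; invoking the hypothesis converts that one-cut CP proof into a BB subproof based on $\D$ of size at most $\mathrm{poly}(s)$, where $s$ is the size of the instance. Plugging this subproof beneath the right child eliminates the cut at $v$ and yields a BC proof with one fewer cut, at the cost of at most a $\mathrm{poly}(s)$ additive blow-up.

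Iterating this elimination at every cut of $\Pi$ produces a pure BB proof of total size at most $|\Pi| + (\text{number of cuts in }\Pi)\cdot \mathrm{poly}(s) \in \mathrm{poly}(s)\cdot|\Pi|$, which is the claimed polynomial bound. The delicate step, and in my view the main obstacle to writing the argument fully formally, is the repeated appeal to inclusion-closedness of $\CP$: both the transplant of the left subtree and the fresh use of $H$ on the right child rely on it. This property is satisfied by every paradigm actually considered in the literature (Chv\'atal-Gomory, split, lift-and-project, general disjunctive cuts), so the argument goes through in all practical settings. Because Theorem~\ref{thm:conv-BC} as stated imposes no explicit assumption on $\CP$, a fully general proof would either need a normalization step that rewrites each cut in $\Pi$ as a derivation using only the original instance and the cuts preceding it along its own path (eliminating any invocation of inclusion-closedness), or a leaf-based variant in which, for each leaf of $\Pi$, the cuts on its root-to-leaf path are assembled into a self-contained CP proof for the leaf's pure-branching sub-instance and the hypothesis is applied leaf by leaf before splicing onto the branching skeleton.
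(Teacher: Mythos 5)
Your skeleton---replace each cut by the corresponding split disjunction (available in $\D$ by hypothesis), reattach the old subtree on the side containing the cut, and invoke the simulation hypothesis to certify the integer-free side---is the right one, but the execution has a genuine gap that you yourself flag: both appeals to inclusion-closedness are load-bearing, and Theorem~\ref{thm:conv-BC} assumes nothing about $\CP$ beyond it being a cutting plane paradigm (the hypotheses of Theorem~\ref{thm:BC>BB/CP}, including inclusion-closedness in the sense of Definition~\ref{def:regular}, are not in force here). The simulation hypothesis only converts CP proofs based on $\CP$ into BB proofs based on $\D$; it gives you no way to conclude that a cut $H \in \CP(P_v)$ belongs to $\CP(Q)$ for a subset $Q \subseteq P_v$. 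Your two suggested repairs do not close the gap either: the ``leaf-based variant'' suffers from exactly the same problem (a cut on a root-to-leaf path was derived from a polyhedron that strictly contains the leaf's pure-branching relaxation intersected with the earlier cuts, so assembling the path's cuts into a self-contained CP proof for that relaxation again needs monotonicity of $\CP$ under shrinking the set), and the ``normalization step'' is not described concretely enough to be checked.

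The missing idea is to apply the hypothesis at the node where the cut is \emph{derived}, not on the integer-free side. The single derivation $H = \{x : a^Tx \leq b\} \in \CP(P_v)$ is already a legitimate one-step cutting plane proof, based on $\CP$, of the validity of $H$ for the instance $P_v$---no inclusion-closedness needed, because this is exactly what the branch-and-cut proof certifies at that node. By the hypothesis there is a BB proof $T_H$ based on $\D$, of size polynomial in the size of $P_v$ (hence of the proof prefix), certifying that $a^Tx \leq b$ holds on $P_v \cap \Z^n$; i.e., every leaf polyhedron of $T_H$ is either LP-infeasible or satisfies $\max\, a^Tx \leq b$. Since disjunctions are independent of the set they are applied to, a BB proof restricts to any subset: intersecting every node of $T_H$ with the right child $P_v \cap \{x : a^Tx \geq \lfloor b\rfloor + 1\}$ turns every leaf into an LP-infeasible polyhedron (because $b < \lfloor b\rfloor + 1$), which is precisely the BB infeasibility certificate you wanted for the right child---now valid for an arbitrary paradigm $\CP$. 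The same fact that BB proofs restrict to subsets unconditionally also repairs your left-child transplant, provided you eliminate cuts bottom-up, so that the subtree being reattached to $P_v \cap \{x : a^Tx \leq \lfloor b\rfloor\} \subseteq P_v \cap H$ is already a pure BB proof rather than a BC proof. With these two changes your induction and your additive size accounting go through essentially verbatim, and the extra assumption on $\CP$ disappears.
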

\end{enumerate}

\section*{Acknowledgement} The author benefited greatly from discussions with Daniel Dadush at CWI, Amsterdam and Timm Oertel at FAU, Erlangen-N\"urmberg. Comments from two anonymous referees helped the author significantly to consolidate the material, improve its presentation and make tighter connections to the existing literature on the complexity of optimization.

\bibliographystyle{plain}
\bibliography{../../../references/full-bib}
\end{document}